\definecolor{dblue}{rgb}{0,0,0.45}
\definecolor{red}{rgb}{0.7,0,0}
\newtheorem{theorem}{Theorem}[section]
\newtheorem{lemma}[theorem]{Lemma}
\newtheorem{proposition}[theorem]{Proposition}
\theoremstyle{definition}
\newtheorem{definition}[theorem]{Definition}
\theoremstyle{remark}
\newcommand{\cM}{{\mathcal M}}
\newcommand{\cMpq}{{\mathcal M}^p_q}
\newcommand{\N}{{\mathbb N}}
\newcommand{\R}{{\mathbb R}}
\newcommand{\C}{{\mathbb C}}
\newcommand{\Z}{{\mathbb Z}}
\newcommand{\cF}{{\mathcal F}}
\newcommand{\cG}{{\mathcal G}}
\begin{document}

\title{Complex interpolation of smoothness Triebel-Lizorkin-Morrey spaces}

\author{D. I. Hakim$^{1}$, T. Nogayama$^{2}$ and Y. Sawano$^{3}$
}
\affil{
Department of Mathematics and Information Sciences, 
Tokyo Metropolitan University, 
1-1 Minami-Osawa,
	Hachioji-shi, Tokyo, 192-0397, Japan \\
Email: $^1$dennyivanalhakim@gmail.com, $^{2}$toru.nogayama@gmail.com, 
$^{3}$yoshihiro-sawano@celery.ocn.ne.jp	
}
\maketitle

\begin{abstract}
This paper extends the result in \cite{HNS15}
to Triebel-Lizorkin-Morrey spaces
which contains $4$ parameters $p,q,r,s$.
This paper reinforces our earlier paper \cite{HNS15}
by 
Nakamura, the first and the third authors
in two different directions.
First,
we include
the smoothness parameter $s$
and
the second smoothness parameter $r$.
In \cite{HNS15}
we assumed $s=0$ and $r=2$.
Here we relax the conditions on $s$ and $r$
to $s \in {\mathbb R}$ and $1 < r \le \infty$.
Second,
we apply a formula obtained by Bergh in 1978
to prove our main theorem without using the underlying sequence spaces.
\end{abstract}

{\bf Classification: 42B35, 41A17, 26B33}

Keywords: smoothness Morrey spaces, Triebel-Lizorkin-Morrey spaces, complex interpolation, square function

\section{Introduction}

In \cite{YSY15},
Yuan, Sickel and Yang defined
the diamond subspace of the smoothness Morrey spaces.
We aim to decribe the complex interpolation
of a class of subspaces of smoothness Morrey spaces
defined in \cite{YSY15},
which extend the results in \cite{HNS15}.
Let $1< q \le p < \infty$.
For an $L^q_{\rm loc}$-function $f$,
its Morrey norm is defined by:
\begin{equation}\label{eq:130709-1}
\| f \|_{{\mathcal M}^p_q}
:= 
\sup_{x \in {\mathbb R}^n, \, R>0}
|B(x,R)|^{\frac{1}{p}-\frac{1}{q}}
\left(
\int_{B(x,R)}|f(y)|^q\,dy
\right)^\frac{1}{q},
\end{equation}
where $B(x,R)$ denotes the ball centered 
at $x \in {\mathbb R}^n$ of radius $R>0$.
The Morrey space ${\mathcal M}^p_q$
is the set of all $L^q$-locally integrable functions $f$
for which the norm $\| f \|_{{\mathcal M}^p_q}$ is finite.
We recall the definition of
Triebel-Lizorkin-Morrey spaces  as follows.
Let $1<q \le p<\infty$, $1 \le r \le \infty$ and $s \in {\mathbb R}$.
Choose
$\varphi \in {\mathcal S}$
so that
$
\chi_{B(2)} \le \varphi \le \chi_{B(3)}$
holds.
Set
\begin{equation}\label{eq:151113-22}
\varphi_0:=\varphi
\end{equation}
and 
\begin{equation}\label{eq:151113-21}
\varphi_j:=\varphi(2^{-j}\cdot)-\varphi(2^{-j+1}\cdot)
\end{equation}
for $j\in\N$. Note that $\varphi_j$ satisfies 
\begin{equation}
\sum_{j=0}^\infty\varphi_j=1.
\end{equation}
Next, we write 
\[
\varphi_j(D)g=\mathcal{F}^{-1}(\varphi_j \mathcal{F}g)
\]
where $\mathcal{F}$ and $\mathcal{F}^{-1}$ denote the Fourier transform and its inverse, defined by
\begin{equation*}
\begin{cases}
\displaystyle
\mathcal{F}g(\xi):=
(2\pi)^{-\frac{n}{2}}\int_{\R^n}g(x)e^{-ix\cdot\xi}dx\quad&(\xi\in\R^n)\\
\displaystyle
\mathcal{F}^{-1}g(x):=
(2\pi)^{-\frac{n}{2}}\int_{\R^n}g(\xi)e^{ix\cdot\xi}d\xi\quad&(x\in\R^n),
\end{cases}
\end{equation*} 
for $g\in L^1(\R^n)$.
Now, for $f \in {\mathcal S}'$,
we define 
\begin{align}
\label{eq:161114-104}
\| f \|_{{\mathcal E}^s_{pqr}}
&:=
\|\varphi_0(D)f\|_{{\mathcal M}^p_q}
+
\left\|
\left(
\sum_{j=1}^\infty
2^{jrs}| \varphi_j(D)f|^r
\right)^{\frac1r}
\right\|_{{\mathcal M}^p_q}.
\end{align}
The {\it Triebel-Lizorkin-Morrey space} 
${\mathcal E}^s_{pqr}$
is the set
of all
$f \in {\mathcal S}'$
for which
the norm
$\| f \|_{{\mathcal E}^s_{pqr}}$
is finite.
The parameters $r$ and $s$ are sometimes called 
the second smoothness parameter
and
the smoothness parameter,
respectively. 
Remark that the definition of ${\mathcal E}^s_{pqr}$ does not depend on the choice of the function $\varphi$ (see \cite[Theorem 1.4]{NNS15} or \cite{TaXu05}).

We are interested in the following closed subspace
of ${\mathcal E}^s_{p q r}$:
\begin{definition}{\rm \cite[Definition 2.23]{YSY15}}{(smoothness space)}
Let $1< q \le p < \infty$ and $1 \le r \le \infty$.
The space
$\overset{\diamond}{{\mathcal E}}{}^s_{p q r}$
denotes the closure with respect to
${\mathcal E}^s_{p q r}$
of the set of all smooth functions $f$
such that $\partial^\alpha f \in {\mathcal E}^s_{p q r}$
for all multi-indices $\alpha$.
\end{definition}

%

We characterize
$\overset{\diamond}{{\mathcal E}}{}^s_{p q r}$
in terms of the Littlewood-Paley decomposition,
which is a starting point of this paper.
\begin{theorem}\label{th-150928-2}
Let $1<q\leq p<\infty$, $1\le r \le \infty$, and 
$f \in {\mathcal E}^s_{p q r}$. 
Then $f$ is in
$\overset{\diamond}{{\mathcal E}}{}^s_{p q r}$,
if and only if 
$\sum_{j=0}^N\varphi_j(D)f$ 
converges to $f$ as 
$N\to \infty$ 
in 
${\mathcal E}^s_{p q r}$.
\end{theorem}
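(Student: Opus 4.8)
The plan is to prove the two implications separately; the substantial one is that $f\in\overset{\diamond}{{\mathcal E}}{}^s_{p q r}$ forces the partial sums $\sum_{j=0}^N\varphi_j(D)f$ to converge to $f$ in ${\mathcal E}^s_{p q r}$.

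\textbf{Convergence $\Rightarrow$ membership.} Put $\Phi_N:=\varphi(2^{-N}\cdot)=\sum_{j=0}^N\varphi_j$, so that $\sum_{j=0}^N\varphi_j(D)f=\Phi_N(D)f$. Since $\Phi_N\in{\mathcal S}$ has support in $B(3\cdot 2^N)$, the distribution $\Phi_N(D)f$ is band-limited, hence a $C^\infty$ function by the Paley--Wiener--Schwartz theorem, and for every multi-index $\alpha$ the derivative $\partial^\alpha\Phi_N(D)f$ again has Fourier support in $B(3\cdot 2^N)$. I would first record the general fact that any $g\in{\mathcal S}'$ with Fourier support in a fixed ball, all of whose Littlewood--Paley pieces $\varphi_k(D)g$ lie in $\cMpq$, belongs to ${\mathcal E}^s_{p q r}$: only finitely many $\varphi_k(D)g$ are nonzero, and each is dominated in $\cMpq$ — after the factorisation $\varphi_k(D)g=(\varphi_k\psi)(D)\tilde\varphi_k(D)g$ with $\psi$ a cut-off equal to $1$ on the ball and $\tilde\varphi_k:=\varphi_{k-1}+\varphi_k+\varphi_{k+1}$, followed by a Fourier multiplier (equivalently Peetre maximal) estimate on $\cMpq$ — by finitely many $\|\varphi_l(D)g\|_{\cMpq}$. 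Applying this with $g=\partial^\alpha\Phi_N(D)f$, whose relevant pieces are controlled by the quantities $\|\varphi_l(D)f\|_{\cMpq}$ that are all finite since $f\in{\mathcal E}^s_{p q r}$, shows that $\Phi_N(D)f$ is one of the smooth functions used in the definition of $\overset{\diamond}{{\mathcal E}}{}^s_{p q r}$. Thus if $\Phi_N(D)f\to f$ in ${\mathcal E}^s_{p q r}$, then $f$ lies in the closure $\overset{\diamond}{{\mathcal E}}{}^s_{p q r}$.

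\textbf{Membership $\Rightarrow$ convergence.} Here I would verify that $A:=\{h\in{\mathcal E}^s_{p q r}:\Phi_N(D)h\to h\text{ in }{\mathcal E}^s_{p q r}\}$ is a closed subspace of ${\mathcal E}^s_{p q r}$ containing every admissible smooth function; since $\overset{\diamond}{{\mathcal E}}{}^s_{p q r}$ is the smallest such closed subspace, $\overset{\diamond}{{\mathcal E}}{}^s_{p q r}\subseteq A$, which is the claim. Closedness of $A$ follows, by a $3\varepsilon$ argument, from the uniform bound $\sup_N\|\Phi_N(D)\|_{{\mathcal E}^s_{p q r}\to{\mathcal E}^s_{p q r}}<\infty$; this in turn comes from a Fourier multiplier theorem on ${\mathcal E}^s_{p q r}$, since the symbols $\Phi_N$ obey the Mikhlin--H\"ormander bounds $|\partial^\gamma\Phi_N(\xi)|\le C_\gamma|\xi|^{-|\gamma|}$ uniformly in $N$ (on the support of $\partial^\gamma\Phi_N$, $|\gamma|\ge1$, one has $|\xi|\sim 2^N$), or alternatively from the direct observation that $\varphi_j(D)\Phi_N(D)h$ equals $\varphi_j(D)h$ for $j\le N-1$, vanishes for $j\ge N+3$, and for the three intermediate $j$ is controlled in $\cMpq$ by neighbouring pieces of $h$. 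For $A\supseteq\{\text{admissible }g\}$: given $g$ smooth with $\partial^\alpha g\in{\mathcal E}^s_{p q r}$ for all $\alpha$, the difference $g-\Phi_N(D)g=(1-\Phi_N)(D)g$ has Fourier support in $\{|\xi|\ge 2^{N+1}\}$, so $\varphi_j(D)(g-\Phi_N(D)g)=0$ for $j\le N-1$ and, up to finitely many boundary pieces and universal constants, $\|g-\Phi_N(D)g\|_{{\mathcal E}^s_{p q r}}\lesssim\sum_{j\ge N}2^{js}\|\varphi_j(D)g\|_{\cMpq}$. Choosing $M\in{\mathbb N}$ with $2M>s$ and setting $h:=(I-\Delta)^Mg\in{\mathcal E}^s_{p q r}$ (a finite combination of derivatives of $g$), the identity $\varphi_j(D)g=2^{-2jM}n_j(D)\tilde\varphi_j(D)h$ with $n_j(\xi):=2^{2jM}(1+|\xi|^2)^{-M}\varphi_j(\xi)$ — a bump whose derivative bounds are uniform in $j$ after rescaling — together with the uniform multiplier (Peetre maximal) estimate on $\cMpq$ yields $\|\varphi_j(D)g\|_{\cMpq}\le C2^{-2jM}\sum_{|l-j|\le1}\|\varphi_l(D)h\|_{\cMpq}\le C2^{-2jM-js}\|h\|_{{\mathcal E}^s_{p q r}}$, the last inequality because $2^{ls}|\varphi_l(D)h|$ lies pointwise below the square function defining $\|h\|_{{\mathcal E}^s_{p q r}}$. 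Summing, $\|g-\Phi_N(D)g\|_{{\mathcal E}^s_{p q r}}\lesssim\|h\|_{{\mathcal E}^s_{p q r}}\sum_{j\ge N}2^{-2jM}\to0$ as $N\to\infty$.

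\textbf{Main difficulty.} The load-bearing inputs are the uniform Fourier-multiplier, equivalently Peetre maximal function, estimates on $\cMpq$ and on ${\mathcal E}^s_{p q r}$ — available in the Morrey scale, but requiring care to check that the relevant symbol families $\{\Phi_N\}_N$ and $\{n_j\}_j$ (and the cut-offs in the first part) have bounds uniform in $N$ and in $j$. Everything else is bookkeeping with the Littlewood--Paley pieces and the $\varphi$-independence of $\|\cdot\|_{{\mathcal E}^s_{p q r}}$ recorded just after \eqref{eq:161114-104}.
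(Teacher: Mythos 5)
Your proposal is correct and follows essentially the same route as the paper: band-limitedness of the partial sums plus $L^1$-convolution/multiplier bounds for the "only if" part, and for the "if" part a density-plus-uniform-boundedness ($3\varepsilon$) argument in which the tail of an admissible $g$ is made small by trading $\varphi_j(D)g$ for $2^{-2jM}$ times pieces of a derivative of $g$. The paper simply takes $M=1$ (writing $\varphi_k(D)g=-2^{-2k}\varphi_k^*(D)\varphi_k(D)[\Delta g]$ with $\varphi_k^*=(\varphi_{k-1}+\varphi_k+\varphi_{k+1})/|2^{-k}\xi|^2$) where you use $(I-\Delta)^M$; the condition $2M>s$ you impose is not actually needed since the $2^{js}$ factors cancel.
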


We seek to describe
the first and the second complex interpolation spaces
of
$\overset{\diamond}{{\mathcal E}}{}^{s_0}_{p_0 q_0 r_0}$
and
$\overset{\diamond}{{\mathcal E}}{}^{s_1}_{p_1 q_1 r_1}$,
where
the parameters $p_0, p_1, q_0, q_1 ,r_0, r_1$ satisfy
\begin{equation}\label{eq:151113-1}
p_0>p_1, \quad
1<q_0 \le p_0<\infty, \quad
1<q_1 \le p_1<\infty, \quad
1 <r_0 , r_1<\infty, \quad
\frac{p_0}{q_0}=\frac{p_1}{q_1}.
\end{equation}
Here, we may assume $p_0>p_1$ due to symmetry between $p_0$ and $p_1$.
To state our main result, we need the following notation.
Let $(X_0,X_1)$ be a compatible couple of Banach spaces.
Let $[X_0,X_1]_\theta$ and $[X_0,X_1]^\theta$ be 
the first
and second Calder\'{o}n complex  interpolation space
whose definition we recall in Section \ref{section:2}.
For $\theta \in (0,1)$,
define $p$, $q$, $r$ and $s$ by:
\begin{equation}\label{eq:151113-2}
\frac{1}{p}:=\frac{1-\theta}{p_0}+\frac{\theta}{p_1}, \quad
\frac{1}{q}:=\frac{1-\theta}{q_0}+\frac{\theta}{q_1}, \quad
\frac{1}{r}:=\frac{1-\theta}{r_0}+\frac{\theta}{r_1}, \quad
s:=(1-\theta)s_0+\theta s_1.
\end{equation}
A direct consequence of 
(\ref{eq:151113-1}) and (\ref{eq:151113-2})
is
\begin{equation}\label{eq:151113-3}
\frac{p}{q}=\frac{p_0}{q_0}=\frac{p_1}{q_1}.
\end{equation}
For $f \in {\mathcal S}'$ ,
we define
\[
S(f;r,s)
:=
\left(\sum_{j=0}^\infty|2^{j s}\varphi_j(D)f|^r\right)^{\frac1r}, \quad
\]
\[
S(f;a,J,r,s):=
\chi_{[a,a^{-1}]}(S(f;r,s))
\left(\sum_{j=J}^\infty|2^{j s}\varphi_j(D)f|^r\right)^{\frac1r}.
\]
Based on this notation, we state our main results as follows:
\begin{theorem}\label{thm:151113-2}
	Suppose that
	we have $13$ parameters
	$p_0,p_1,p,q_0,q_1,q,r,r_0,r_1,s,s_0,s_1$, and $\theta$
	satisfying 
	$(\ref{eq:151113-1})$ and $(\ref{eq:151113-2})$.
	\begin{enumerate}
		\item
		We have
		\begin{equation}\label{eq:151113-11}
		[
		\overset{\diamond}{{\mathcal E}}{}^{s_0}_{p_0q_0r_0},
		\overset{\diamond}{{\mathcal E}}{}^{s_1}_{p_1q_1r_1}]_\theta
		=
		\overset{\diamond}{{\mathcal E}}{}^{s}_{pqr}
		\cap
		[
		{\mathcal E}^{s_0}_{p_0q_0r_0},
		{\mathcal E}^{s_1}_{p_1q_1r_1}]_\theta.
		\end{equation}
		\item
		If $r_0=r_1$ and $s_0=s_1$, then 
		\begin{equation}\label{eq:151113-12}
		[\overset{\diamond}{{\mathcal E}}{}^{s_0}_{p_0q_0r_0},
		\overset{\diamond}{{\mathcal E}}{}^{s_1}_{p_1q_1r_1}]^\theta
		=
		\bigcap_{0<a<1}
		\left\{
		f \in {\mathcal E}^s_{p q r}
		\,:\,
		\lim_{J \to \infty}
		\|S(f;a,J,r,s)\|_{{\mathcal M}^p_q}
		=0
		\right\}.
		\end{equation}
	\end{enumerate}
\end{theorem}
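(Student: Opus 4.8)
The plan is to turn everything into statements about the partial sums $T_N:=\sum_{j=0}^{N}\varphi_j(D)$ of the Littlewood--Paley decomposition. Telescoping shows $T_N=\varphi(2^{-N}\cdot)(D)$, so the symbols of the $T_N$ satisfy the Mihlin condition uniformly in $N$; hence $\sup_N\|T_N\|_{X\to X}<\infty$ when $X$ is any ${\mathcal E}^{\sigma}_{uvw}$ or $\overset{\diamond}{{\mathcal E}}{}^{\sigma}_{uvw}$ occurring below, and (by interpolation) also when $X$ is any of the complex interpolation spaces formed from them. Two further properties will be used constantly: (i) by Theorem~\ref{th-150928-2}, $\overset{\diamond}{{\mathcal E}}{}^{s}_{pqr}=\{f\in{\mathcal E}^{s}_{pqr}:T_Nf\to f\ \text{in}\ {\mathcal E}^{s}_{pqr}\}$; (ii) every band-limited $g\in{\mathcal E}^{\sigma}_{uvw}$ lies in $\overset{\diamond}{{\mathcal E}}{}^{\sigma}_{uvw}$, since $T_Mg=g$ for all large $M$. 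I will also use the log-convexity $\|h\|_{{\mathcal E}^{s}_{pqr}}\lesssim\|h\|_{{\mathcal E}^{s_0}_{p_0q_0r_0}}^{1-\theta}\|h\|_{{\mathcal E}^{s_1}_{p_1q_1r_1}}^{\theta}$ for $h\in{\mathcal E}^{s_0}_{p_0q_0r_0}\cap{\mathcal E}^{s_1}_{p_1q_1r_1}$ (H\"{o}lder's inequality in the $\ell^{r}$-sum and then in the Morrey norm, using $(\ref{eq:151113-2})$--$(\ref{eq:151113-3})$), the identity $[{\mathcal E}^{s_0}_{p_0q_0r_0},{\mathcal E}^{s_1}_{p_1q_1r_1}]^{\theta}={\mathcal E}^{s}_{pqr}$ (the analogue for the whole scale of the known equality $[{\mathcal M}^{p_0}_{q_0},{\mathcal M}^{p_1}_{q_1}]^{\theta}={\mathcal M}^{p}_{q}$, proved by the Calder\'{o}n-product construction and the only place the constraint $p_0/q_0=p_1/q_1$ is used), and, as the central bridge, Bergh's formula $[X_0,X_1]_{\theta}=\overline{X_0\cap X_1}^{\,[X_0,X_1]^{\theta}}$ for a compatible couple, which is what lets us dispense with the reduction to sequence spaces used in \cite{HNS15}.

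For the inclusion $\subseteq$ in $(\ref{eq:151113-11})$: monotonicity of complex interpolation already gives $[\overset{\diamond}{{\mathcal E}}{}^{s_0}_{p_0q_0r_0},\overset{\diamond}{{\mathcal E}}{}^{s_1}_{p_1q_1r_1}]_{\theta}\hookrightarrow[{\mathcal E}^{s_0}_{p_0q_0r_0},{\mathcal E}^{s_1}_{p_1q_1r_1}]_{\theta}$, so it remains to place every $f$ from the left-hand side in $\overset{\diamond}{{\mathcal E}}{}^{s}_{pqr}$. On the dense subspace $\overset{\diamond}{{\mathcal E}}{}^{s_0}_{p_0q_0r_0}\cap\overset{\diamond}{{\mathcal E}}{}^{s_1}_{p_1q_1r_1}$, combining Theorem~\ref{th-150928-2} at each endpoint with the log-convexity inequality shows $(I-T_N)g\to 0$ in $[\overset{\diamond}{{\mathcal E}}{}^{s_0}_{p_0q_0r_0},\overset{\diamond}{{\mathcal E}}{}^{s_1}_{p_1q_1r_1}]_{\theta}$; with the uniform bound on $\|T_N\|$ this yields $T_Nf\to f$ in that interpolation space, hence in ${\mathcal E}^{s}_{pqr}$ (into which it embeds continuously, via $[{\mathcal E}^{s_0}_{p_0q_0r_0},{\mathcal E}^{s_1}_{p_1q_1r_1}]^{\theta}={\mathcal E}^{s}_{pqr}$), so $f\in\overset{\diamond}{{\mathcal E}}{}^{s}_{pqr}$ by Theorem~\ref{th-150928-2}.

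For $\supseteq$ in $(\ref{eq:151113-11})$ I would first rewrite both sides with Bergh's formula. Applied to $({\mathcal E}^{s_0}_{p_0q_0r_0},{\mathcal E}^{s_1}_{p_1q_1r_1})$ it gives $[{\mathcal E}^{s_0}_{p_0q_0r_0},{\mathcal E}^{s_1}_{p_1q_1r_1}]_{\theta}=\overline{{\mathcal E}^{s_0}_{p_0q_0r_0}\cap{\mathcal E}^{s_1}_{p_1q_1r_1}}^{\,{\mathcal E}^{s}_{pqr}}$, and one checks
\[
\overset{\diamond}{{\mathcal E}}{}^{s}_{pqr}\cap\overline{{\mathcal E}^{s_0}_{p_0q_0r_0}\cap{\mathcal E}^{s_1}_{p_1q_1r_1}}^{\,{\mathcal E}^{s}_{pqr}}=\overline{\overset{\diamond}{{\mathcal E}}{}^{s_0}_{p_0q_0r_0}\cap\overset{\diamond}{{\mathcal E}}{}^{s_1}_{p_1q_1r_1}}^{\,{\mathcal E}^{s}_{pqr}} :
\]
the inclusion $\supseteq$ uses log-convexity (so that $\overset{\diamond}{{\mathcal E}}{}^{s_0}_{p_0q_0r_0}\cap\overset{\diamond}{{\mathcal E}}{}^{s_1}_{p_1q_1r_1}\subseteq\overset{\diamond}{{\mathcal E}}{}^{s}_{pqr}$) together with the fact that the right-hand side is ${\mathcal E}^{s}_{pqr}$-closed, while for $\subseteq$ one writes $f=\lim_m h_m$ in ${\mathcal E}^{s}_{pqr}$ with $h_m\in{\mathcal E}^{s_0}_{p_0q_0r_0}\cap{\mathcal E}^{s_1}_{p_1q_1r_1}$, notes $T_Nh_m\in\overset{\diamond}{{\mathcal E}}{}^{s_0}_{p_0q_0r_0}\cap\overset{\diamond}{{\mathcal E}}{}^{s_1}_{p_1q_1r_1}$ (band-limited, and in each endpoint), $T_Nh_m\to T_Nf$ in ${\mathcal E}^{s}_{pqr}$, and $T_Nf\to f$ in ${\mathcal E}^{s}_{pqr}$ since $f\in\overset{\diamond}{{\mathcal E}}{}^{s}_{pqr}$. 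So the right-hand side of $(\ref{eq:151113-11})$ equals $\overline{\overset{\diamond}{{\mathcal E}}{}^{s_0}_{p_0q_0r_0}\cap\overset{\diamond}{{\mathcal E}}{}^{s_1}_{p_1q_1r_1}}^{\,{\mathcal E}^{s}_{pqr}}$, while Bergh's formula applied to $(\overset{\diamond}{{\mathcal E}}{}^{s_0}_{p_0q_0r_0},\overset{\diamond}{{\mathcal E}}{}^{s_1}_{p_1q_1r_1})$ shows the left-hand side equals $\overline{\overset{\diamond}{{\mathcal E}}{}^{s_0}_{p_0q_0r_0}\cap\overset{\diamond}{{\mathcal E}}{}^{s_1}_{p_1q_1r_1}}^{\,[\overset{\diamond}{{\mathcal E}}{}^{s_0}_{p_0q_0r_0},\overset{\diamond}{{\mathcal E}}{}^{s_1}_{p_1q_1r_1}]^{\theta}}$. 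Thus $(\ref{eq:151113-11})$ is equivalent to the coincidence, as sets, of these two closures of $\overset{\diamond}{{\mathcal E}}{}^{s_0}_{p_0q_0r_0}\cap\overset{\diamond}{{\mathcal E}}{}^{s_1}_{p_1q_1r_1}$; one inclusion is automatic (the second-method norm is stronger), and the reverse --- that an ${\mathcal E}^{s}_{pqr}$-limit of diamond-intersection elements is already a second-method limit of such elements --- is the main obstacle. I expect to settle it by the Calder\'{o}n-product device adapted to the diamond spaces: decompose $f$ into the dyadic shells $\{2^{-\ell-1}<S(f;r,s)\le2^{-\ell}\}$, on which $f$ behaves like a bounded function, build on each shell the analytic family of the scalar case, truncate with $T_N$ to make the boundary values band-limited, and sum; the interpolation estimate then comes from the log-convexity inequality shell by shell. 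This single construction, not the sequence-space machinery of \cite{HNS15}, is the technical core.

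For part (2) I would prove directly that the right-hand side of $(\ref{eq:151113-12})$, call it $V$, equals $[\overset{\diamond}{{\mathcal E}}{}^{s}_{p_0q_0r},\overset{\diamond}{{\mathcal E}}{}^{s}_{p_1q_1r}]^{\theta}$, using Calder\'{o}n's realization of the second method via the space $\cG$ of antiderivatives of analytic interpolation families. For $[\overset{\diamond}{{\mathcal E}}{}^{s}_{p_0q_0r},\overset{\diamond}{{\mathcal E}}{}^{s}_{p_1q_1r}]^{\theta}\subseteq V$: if $f=G'(\theta)$ with $G\in\cG(\overset{\diamond}{{\mathcal E}}{}^{s}_{p_0q_0r},\overset{\diamond}{{\mathcal E}}{}^{s}_{p_1q_1r})$, then $f\in{\mathcal E}^{s}_{pqr}$ by the identity above, and representing $G'(\theta)$ by Calder\'{o}n's harmonic-measure boundary formula one bounds $\|S(f;a,J,r,s)\|_{{\mathcal M}^p_q}$ by the Littlewood--Paley tails of the boundary increments of $G$; these increments lie in the diamond spaces, so Theorem~\ref{th-150928-2} makes their tails --- and hence $\|S(f;a,J,r,s)\|_{{\mathcal M}^p_q}$ --- tend to $0$ as $J\to\infty$, for each $a\in(0,1)$, the cut-off $\chi_{[a,a^{-1}]}(S(f;r,s))$ being precisely what converts the norm convergence of $T_N$ on the increments into a statement about $f$ alone. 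For $V\subseteq[\overset{\diamond}{{\mathcal E}}{}^{s}_{p_0q_0r},\overset{\diamond}{{\mathcal E}}{}^{s}_{p_1q_1r}]^{\theta}$ one runs the Calder\'{o}n-product construction for a given $f\in{\mathcal E}^{s}_{pqr}$, again using the dyadic shells of $S(f;r,s)$, and invokes the hypothesis $\lim_{J\to\infty}\|S(f;a,J,r,s)\|_{{\mathcal M}^p_q}=0$ for every $a$ to verify, via Theorem~\ref{th-150928-2}, that the boundary increments of the resulting $G$ land in $\overset{\diamond}{{\mathcal E}}{}^{s}_{p_jq_jr}$, so $G\in\cG(\overset{\diamond}{{\mathcal E}}{}^{s}_{p_0q_0r},\overset{\diamond}{{\mathcal E}}{}^{s}_{p_1q_1r})$ and $G'(\theta)=f$. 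As in part (1), the crux is the explicit construction of an analytic family with diamond-valued boundary data of controlled norm, and Bergh's formula is what ties the two complex interpolation identities to that one construction without any appeal to sequence spaces.
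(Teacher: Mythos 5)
Your soft reductions are largely sound, and in places cleaner than the paper's: the $\subseteq$ direction of \eqref{eq:151113-11} via uniform boundedness of $T_N=\sum_{j\le N}\varphi_j(D)$ on the interpolation couple, density of $\overset{\diamond}{{\mathcal E}}{}^{s_0}_{p_0q_0r_0}\cap\overset{\diamond}{{\mathcal E}}{}^{s_1}_{p_1q_1r_1}$, and Theorem \ref{th-150928-2} is a correct and rather standard argument (the paper instead routes this inclusion through \eqref{eq:151113-12} and Bergh's formula). Likewise your identification of the right-hand side of \eqref{eq:151113-11} with $\overline{\overset{\diamond}{{\mathcal E}}{}^{s_0}_{p_0q_0r_0}\cap\overset{\diamond}{{\mathcal E}}{}^{s_1}_{p_1q_1r_1}}^{{\mathcal E}^s_{pqr}}$ checks out, given Theorem \ref{thm:170426s-2} and \eqref{eq:Bergh}. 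But the two places where the theorem actually has content are left as declarations of intent, and the device you propose for them is the one that is known to fail here. The ``decompose into dyadic shells of $S(f;r,s)$, where $f$ behaves like a bounded function, and run the Calder\'on-product construction'' heuristic is exactly the lattice argument that breaks for these spaces: $\overset{\diamond}{{\mathcal M}}{}^p_q$ does not have the lattice property and ${\mathcal M}^{p_0}_{q_0}\cap{\mathcal M}^{p_1}_{q_1}$ is not dense in ${\mathcal M}^p_q$, which is precisely why the paper replaces the shell decomposition by the explicit family $F(z)=\sum_\nu\varphi_\nu(D)\bigl[V_\nu(f)^{p(\frac{1-z}{p_0}+\frac{z}{p_1})-1}\varphi_\nu(D)f\bigr]$ built from the \emph{partial} square functions $V_\nu(f)$, refrequency-localized by $\varphi_\nu(D)$, and then passes to the antiderivative $G(z)=\int_\theta^zF(w)\,dw$ to obtain Lipschitz boundary data. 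Verifying $G\in{\mathcal G}(\overset{\diamond}{{\mathcal E}}{}^{s_0}_{p_0q_0r_0},\overset{\diamond}{{\mathcal E}}{}^{s_1}_{p_1q_1r_1})$ occupies all of Section \ref{ss331} and hinges on the logarithm lemmas (Lemmas \ref{lem:161109-1}--\ref{lem:161109-2}) controlling $(s^{z}-1)/\log(s^r)$ together with the Abel-type Lemma \ref{lem:151109-1}; the splitting into $I_1$ (handled by $\|S(f;a,J,r,s)\|_{{\mathcal M}^p_q}\to0$) and $I_2$ (handled by $\Phi_\kappa,\Psi_\kappa$ and the choice of $a$) is where the hypothesis of \eqref{eq:151113-12} is actually consumed. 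None of this machinery appears in your sketch, and ``log-convexity shell by shell'' will not produce the Lipschitz estimates on $t\mapsto G(j+it)$ nor the membership $G(z)\in\overset{\diamond}{{\mathcal E}}{}^{s_0}_{p_0q_0r_0}+\overset{\diamond}{{\mathcal E}}{}^{s_1}_{p_1q_1r_1}$.

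The converse inclusion $\subseteq$ in \eqref{eq:151113-12} is also a genuine gap as written. Your plan --- represent $G'(\theta)$ by a boundary formula and deduce decay of the tails from Theorem \ref{th-150928-2} applied to the boundary increments --- glosses over the fact that the increments only control $f$ after a nontrivial transfer: the paper writes $f=f_k+f_{k,0}+f_{k,1}$ with $f_k\in\overset{\diamond}{{\mathcal E}}{}^s_{pqr}$ and small remainders in the endpoint spaces (via Lemma \ref{lem:dense2} and Proposition \ref{cr-151002-1}), then compares $\Phi_A$ of the tail of $f$ with that of $f_k$ by the mean value theorem and absorbs the remainders by H\"older's inequality with the exponent splitting $q_1<q<q_0$ and $p/q=p_0/q_0=p_1/q_1$. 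The cut-off $\chi_{[a,a^{-1}]}(S(f;r,s))$ does not by itself ``convert norm convergence on the increments into a statement about $f$''; it is needed because the remainders live in \emph{different} Morrey scales and can only be controlled on the set where $S(f;r,s)$ is bounded above and below. Finally, for $\supseteq$ in \eqref{eq:151113-11} the paper's actual mechanism --- that $f_J-f_{J'}$ is band-limited with spectrum compact in ${\mathbb R}^n\setminus\{0\}$, hence admits a representing $F_{J,J'}\in{\mathcal F}(\overset{\diamond}{{\mathcal E}}{}^{s_0}_{p_0q_0r_0},\overset{\diamond}{{\mathcal E}}{}^{s_1}_{p_1q_1r_1})$ with norm comparable to $\|f_J-f_{J'}\|_{{\mathcal E}^s_{pqr}}$, so that $\{f_J\}$ is Cauchy in the diamond interpolation space --- is again an explicit construction, not a consequence of the abstract closure identity you reduce to. In short: the functional-analytic frame of your proposal is right, but the analytic core (the construction of $G$, the log-factor estimates, and the H\"older/mean-value argument for the forward inclusion of \eqref{eq:151113-12}) is missing, and the substitute you propose does not work for non-lattice subspaces of Morrey type.
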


\begin{theorem}\label{thm:170426s-2}
	Suppose that
	we have $13$ parameters
	$p_0,p_1,p,q_0,q_1,q,r,r_0,r_1,s,s_0,s_1$, and $\theta$
	satisfying \eqref{eq:151113-1} and \eqref{eq:151113-2}.
	Then
	we have
	\begin{equation}\label{eq:170426s-12}
	[{\mathcal E}^{s_0}_{p_0q_0r_0},
	{\mathcal E}^{s_1}_{p_1q_1r_1}]^\theta
	={\mathcal E}^{s}_{pqr}
	\end{equation}
	with equivalence of norms.
\end{theorem}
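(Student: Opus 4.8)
The plan is to prove the two continuous embeddings
\[
{\mathcal E}^s_{pqr}\hookrightarrow[{\mathcal E}^{s_0}_{p_0q_0r_0},{\mathcal E}^{s_1}_{p_1q_1r_1}]^\theta\hookrightarrow{\mathcal E}^s_{pqr},
\]
the engine being Bergh's formula: for a compatible couple $(A_0,A_1)$ and $0<\theta<1$, the space $[A_0,A_1]^\theta$ is the Gagliardo completion of $[A_0,A_1]_\theta$ inside $A_0+A_1$, so that $a\in[A_0,A_1]^\theta$ exactly when there exist $a_N\in[A_0,A_1]_\theta$ with $a_N\to a$ in $A_0+A_1$ and $\sup_N\|a_N\|_{[A_0,A_1]_\theta}<\infty$, in which case $\|a\|_{[A_0,A_1]^\theta}=\inf\bigl(\limsup_N\|a_N\|_{[A_0,A_1]_\theta}\bigr)$; in particular $[A_0,A_1]^\theta$ has the Fatou property. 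Writing $(A_0,A_1):=({\mathcal E}^{s_0}_{p_0q_0r_0},{\mathcal E}^{s_1}_{p_1q_1r_1})$, this reduces Theorem~\ref{thm:170426s-2} to two statements about the \emph{first} complex method: the a priori bound $[A_0,A_1]_\theta\hookrightarrow{\mathcal E}^s_{pqr}$, and the converse inclusion $\overset{\diamond}{{\mathcal E}}{}^s_{pqr}\hookrightarrow[A_0,A_1]_\theta$. Routing the argument through Bergh's formula is exactly what lets us bypass the underlying sequence spaces: the direct attack on the second method -- apply $\varphi_j(D)$, weight by $2^{js(z)}$, invoke the abstract interpolation inequality -- produces an analytic family whose $z$-derivative carries an unbounded factor $j$ whenever $s_0\neq s_1$, so the second method is not reachable without interpolating weighted $\ell^r$-valued Morrey spaces, whereas the first method does not feel this derivative.

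For the embedding ${\mathcal E}^s_{pqr}\hookrightarrow[A_0,A_1]^\theta$, fix $f\in{\mathcal E}^s_{pqr}$ and set $f_N:=\sum_{j=0}^N\varphi_j(D)f$. Each $f_N$ is smooth with every derivative in ${\mathcal E}^s_{pqr}$, hence $f_N\in\overset{\diamond}{{\mathcal E}}{}^s_{pqr}$, and $\|f_N\|_{{\mathcal E}^s_{pqr}}\le C\|f\|_{{\mathcal E}^s_{pqr}}$ uniformly in $N$ by the vector-valued maximal inequality on ${\mathcal M}^p_q$ underlying the Littlewood--Paley characterization of ${\mathcal E}^s_{pqr}$. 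By the first-method converse inclusion, $\|f_N\|_{[A_0,A_1]_\theta}\lesssim\|f_N\|_{{\mathcal E}^s_{pqr}}\lesssim\|f\|_{{\mathcal E}^s_{pqr}}$, so $(f_N)_N$ is bounded in $[A_0,A_1]_\theta\hookrightarrow[A_0,A_1]^\theta$; since $f_N\to f$ in ${\mathcal S}'$ and $[A_0,A_1]^\theta$ is stable under bounded ${\mathcal S}'$-convergent limits (a normal families argument for the defining analytic functions, using $A_0+A_1\hookrightarrow{\mathcal S}'$ and that ${\mathcal S}'$ is a Montel space), we obtain $f\in[A_0,A_1]^\theta$ with $\|f\|_{[A_0,A_1]^\theta}\lesssim\|f\|_{{\mathcal E}^s_{pqr}}$. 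For the reverse embedding, given $f\in[A_0,A_1]^\theta$ Bergh's formula supplies $g_N\in[A_0,A_1]_\theta$ with $g_N\to f$ in $A_0+A_1$ (hence in ${\mathcal S}'$) and $\sup_N\|g_N\|_{[A_0,A_1]_\theta}\lesssim\|f\|_{[A_0,A_1]^\theta}$; the a priori bound $[A_0,A_1]_\theta\hookrightarrow{\mathcal E}^s_{pqr}$ together with the Fatou property of ${\mathcal E}^s_{pqr}$ (a standard feature of Morrey-based spaces) then gives $f\in{\mathcal E}^s_{pqr}$ with the matching norm bound, and the two embeddings are the asserted equivalence of norms.

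It remains to supply the two first-method inclusions. The a priori bound $[A_0,A_1]_\theta\hookrightarrow{\mathcal E}^s_{pqr}$ is the routine direction and is obtained as the ``easy'' half of Theorem~\ref{thm:151113-2}(1), or quoted from the existing interpolation theory. The converse $\overset{\diamond}{{\mathcal E}}{}^s_{pqr}\hookrightarrow[A_0,A_1]_\theta$ is the main obstacle. For $f$ in a dense subclass of $\overset{\diamond}{{\mathcal E}}{}^s_{pqr}$ -- band-limited $f$, say, for which Theorem~\ref{th-150928-2} gives convergence of the Littlewood--Paley partial sums to $f$ -- one must construct a bounded analytic family $F$ on the closed strip with $F(\theta)=f$ and $\sup_t\|F({\rm i}t)\|_{A_0}+\sup_t\|F(1+{\rm i}t)\|_{A_1}\lesssim\|f\|_{{\mathcal E}^s_{pqr}}$. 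As $f$ is merely a distribution, the power-function trick cannot be applied to $f$; it is applied instead to the square function $S(f;r,s)=\bigl(\sum_{j=0}^\infty|2^{js}\varphi_j(D)f|^r\bigr)^{1/r}$, which satisfies $\|S(f;r,s)\|_{{\mathcal M}^p_q}\approx\|f\|_{{\mathcal E}^s_{pqr}}$: one modulates the pieces $\varphi_j(D)f$ by complex powers built from $S(f;r,s)$ so that, on the line ${\rm Re}\,z=k$, the weighted $\ell^{r_k}$-square function of the modulated pieces is pointwise a power of $S(f;r,s)$, and the balance $p_0/q_0=p/q=p_1/q_1$ from \eqref{eq:151113-1} turns its ${\mathcal M}^{p_k}_{q_k}$-norm into $\|f\|_{{\mathcal E}^s_{pqr}}^{q/q_k}$ -- bounded, after normalizing $\|f\|_{{\mathcal E}^s_{pqr}}=1$, by $\|f\|_{{\mathcal E}^s_{pqr}}$ -- exactly as in the Morrey computation of \cite{HNS15}. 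The work lies in verifying that the family $F$ so defined is genuinely analytic with values in $A_0+A_1$ and meets these boundary bounds; this is the step that the passage through Bergh's formula allows us to carry out without ever interpolating the underlying sequence spaces.
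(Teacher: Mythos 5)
Your ``easy'' half is essentially the paper's: represent $f\in[{\mathcal E}^{s_0}_{p_0q_0r_0},{\mathcal E}^{s_1}_{p_1q_1r_1}]^\theta$ by difference quotients $F_j(z)=-i2^j(G(z+2^{-j}i)-G(z))\in{\mathcal F}$, apply \cite[Corollary 1.11]{YYZ13} to $F_j(\theta)$, and pass to the limit by the Fatou property of ${\mathcal E}^s_{pqr}$. The problems are in the hard inclusion ${\mathcal E}^s_{pqr}\hookrightarrow[\cdot,\cdot]^\theta$, where your argument has two genuine gaps. First, your ``engine'' is not Bergh's formula: \eqref{eq:Bergh} says $[X_0,X_1]_\theta=\overline{X_0\cap X_1}^{[X_0,X_1]^\theta}$, which gives the forward implication (every element of $[X_0,X_1]^\theta$ is a bounded $X_0+X_1$-limit of elements of $[X_0,X_1]_\theta$) but \emph{not} the converse that a bounded $[X_0,X_1]_\theta$-sequence converging in $X_0+X_1$ (or in ${\mathcal S}'$) has its limit in $[X_0,X_1]^\theta$. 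That converse is a Fatou-type property of the upper functor which is exactly what must be proved: your parenthetical normal-families argument has to produce a limit $G$ that is continuous on the \emph{closed} strip with values in $A_0+A_1$, holomorphic in the interior in the $A_0+A_1$ norm, and whose boundary traces $t\mapsto G(j+it)-G(j)$ are Lipschitz \emph{into $A_j$}; each of these requires the Fatou property of ${\mathcal E}^{s_k}_{p_kq_kr_k}$ under bounded ${\mathcal S}'$-convergence plus uniform Lipschitz bounds for the approximating $G_N$ on $\overline{U}$, none of which is supplied. Second, the input $\overset{\diamond}{{\mathcal E}}{}^{s}_{pqr}\hookrightarrow[{\mathcal E}^{s_0}_{p_0q_0r_0},{\mathcal E}^{s_1}_{p_1q_1r_1}]_\theta$ (with constants uniform over the truncations $f_N$) is the heart of the matter and is only gestured at; note that in the paper this first-method inclusion is \emph{deduced from} Theorem \ref{thm:170426s-2} in the proof of $\supset$ in \eqref{eq:151113-11}, so it cannot be quoted here without circularity, and must be built from scratch by the same power-function construction you defer.

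Your stated reason for routing through the first method --- that a direct ${\mathcal G}$-family for the second method picks up an unbounded factor $j$ when $s_0\neq s_1$ --- is a misconception, and it is precisely what the paper's construction avoids. The paper defines $F(z)=\sum_\nu F_\nu(z)$ with the smoothness shift carried by the factor $2^{\nu\rho_1(z)}$, and then takes the primitive $G(z)=\int_\theta^zF(w)\,dw$. The boundary Lipschitz conditions for $G$ follow from mere \emph{boundedness} of $F$ on the lines ${\rm Re}\,z=0,1$ (one integrates in $t$), so no derivative of $2^{\nu\rho_1(z)}$ in $z$, and hence no factor $\nu$, ever appears; the boundary bounds reduce to the pointwise identity that the modulated $\ell^{r_k}$-square function is a power of $S(f;r,s)$ together with $p_0/q_0=p/q=p_1/q_1$, exactly the computation you describe for the first method. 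In short: the construction you are postponing for the ${\mathcal F}$-family is the same work as the paper's Section \ref{ss331}, but done for the wrong functor, and then you still need an unproven Fatou property of $[\cdot,\cdot]^\theta$ to transfer it. The direct ${\mathcal G}$-construction is both necessary and sufficient, and your detour does not avoid it.
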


Having stated the main result in this paper,
let us investigate its relation with
the existing results.
The corresponding results
for the first complex interpolation of Triebel-Lizorkin-Morrey spaces 
was obtained by Yang, Yuan and Zhuo 
(see \cite[Corollary 1.11]{YYZ13}). They proved the following theorem.
\begin{theorem}{\rm \cite[Corollary 1.11]{YYZ13}}
	Suppose that
	we have $13$ parameters
	$p_0$, $p_1$, $p$, $q_0$, 
	$q_1$, $q$, $r$, $r_0$, $r_1$, $s$, $s_0$, $s_1$, 
	and $\theta$
	satisfying 
	$(\ref{eq:151113-1})$ and $(\ref{eq:151113-2})$. Then
	\begin{equation}\label{eq:151113-11y-2}
	[
	{\mathcal E}^{s_0}_{p_0q_0r_0},
	{\mathcal E}^{s_1}_{p_1q_1r_1}]_\theta
	\subseteq
	{\mathcal E}^s_{p q r}.
	\end{equation}
\end{theorem}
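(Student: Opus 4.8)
The plan is to prove the norm inequality $\|f\|_{{\mathcal E}^s_{pqr}}\lesssim\|f\|_{[{\mathcal E}^{s_0}_{p_0q_0r_0},{\mathcal E}^{s_1}_{p_1q_1r_1}]_\theta}$ by a Stein-type three-lines argument applied directly to the square function, so that no auxiliary sequence space is needed, in keeping with the philosophy of this paper. Recall that $\|f\|_{{\mathcal E}^s_{pqr}}$ is comparable to $\|S(f;r,s)\|_{{\mathcal M}^p_q}$, and that by the definition $(\ref{eq:130709-1})$ of the Morrey norm together with the duality $(L^q(B;\ell^r))^*=L^{q'}(B;\ell^{r'})$ (legitimate since $(\ref{eq:151113-1})$ forces $1<q,r<\infty$),
\[
\|S(f;r,s)\|_{{\mathcal M}^p_q}=\sup_{B}\,\sup_{(h_j)_j}\left|\sum_{j=0}^\infty\int_B 2^{js}\varphi_j(D)f(y)\,\overline{h_j(y)}\,dy\right|,
\]
where $B=B(x,R)$ runs over all balls and $(h_j)_j$ over sequences supported in $B$ with $\|(h_j)_j\|_{L^{q'}(B;\ell^{r'})}\le|B|^{1/p-1/q}$. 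It therefore suffices to bound this pairing uniformly in $B$ and in $(h_j)_j$.

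Next I would fix $f=F(\theta)$ for an admissible $F$ normalized so that $\sup_t\|F(\mathrm{i}t)\|_{{\mathcal E}^{s_0}_{p_0q_0r_0}}\le1$ and $\sup_t\|F(1+\mathrm{i}t)\|_{{\mathcal E}^{s_1}_{p_1q_1r_1}}\le1$, fix $B$ and $(h_j)_j$, and (by density) assume finitely many $h_j$, each bounded with support in $B$. Writing the pointwise splitting $h_j(y)=\rho(y)\,\omega_j(y)$ with $\rho(y):=\|(h_j(y))_j\|_{\ell^{r'}}$ and $\|(\omega_j(y))_j\|_{\ell^{r'}}=1$, and setting $1/q'(z):=(1-z)/q_0'+z/q_1'$, $1/r'(z):=(1-z)/r_0'+z/r_1'$, $s(z):=(1-z)s_0+zs_1$, I would define the analytic test family and the analytic pairing
\[
h_{j,z}(y):=\rho(y)^{q'/q'(z)}\,|\omega_j(y)|^{r'/r'(z)}\,\frac{\omega_j(y)}{|\omega_j(y)|}\,|B|^{\delta(z)},\qquad
\Phi(z):=\sum_j\int_B 2^{js(z)}\varphi_j(D)F(z)(y)\,\overline{h_{j,z}(y)}\,dy,
\]
with $\delta$ affine. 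At $z=\theta$ all exponents collapse ($q'/q'(\theta)=r'/r'(\theta)=1$, $s(\theta)=s$), so once $\delta(\theta)=0$ is checked, $\Phi(\theta)$ is exactly the pairing to be estimated.

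The core estimate is on the boundary lines. On $\mathrm{Re}\,z=0$ one has $\mathrm{Re}(s(\mathrm{i}t))=s_0$, so $|2^{js(\mathrm{i}t)}|=2^{js_0}$ matches the endpoint weight automatically, while the exponents reduce to $q'/q_0'$ and $r'/r_0'$; the pointwise $\ell^{r_0'}$-normalization $\|(\omega_j(y))_j\|_{\ell^{r'}}=1$ then yields $\|(h_{j,\mathrm{i}t})_j\|_{L^{q_0'}(B;\ell^{r_0'})}\le|B|^{1/p_0-1/q_0}$. Vector-valued H\"older on $B$ together with $(\ref{eq:130709-1})$ gives
\[
|\Phi(\mathrm{i}t)|\le\|(2^{js_0}\varphi_j(D)F(\mathrm{i}t))_j\|_{L^{q_0}(B;\ell^{r_0})}\,\|(h_{j,\mathrm{i}t})_j\|_{L^{q_0'}(B;\ell^{r_0'})}\le\|F(\mathrm{i}t)\|_{{\mathcal E}^{s_0}_{p_0q_0r_0}}\le1,
\]
and analogously $|\Phi(1+\mathrm{i}t)|\le1$. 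These two normalizations force $\delta$, and the compatibility $\delta(\theta)=0$ reduces, after inserting $\|\rho\|_{L^{q'}(B)}\le|B|^{1/p-1/q}$, to the identities $(1-\theta)(1/p_0-1/q_0)+\theta(1/p_1-1/q_1)=1/p-1/q$ and $(1-\theta)/q_0'+\theta/q_1'=1/q'$, both immediate from $(\ref{eq:151113-2})$. Since $\Phi$ is analytic in the open strip and bounded and continuous on its closure, the three-lines lemma gives $|\Phi(\theta)|\le1$, and taking suprema over $(h_j)_j$ and $B$ yields the claim.

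The hard part will be the rigorous construction and control of the analytic family $(h_{j,z})_j$: one must carry out the simultaneous interpolation of the outer exponent $q$, the inner exponent $r$, and the weight $2^{js}$, confirm analyticity in $z$ together with uniform boundedness of $\Phi$ on the closed strip (via finiteness of the sum, boundedness of $\rho,\omega_j$, and boundedness of $z\mapsto F(z)$), and verify the exponent bookkeeping producing $\delta(\theta)=0$. Some care is also needed in the approximation step --- reducing to finitely supported bounded test sequences before passing to the limit by density --- and in the separate but entirely analogous low-frequency term $j=0$, whose weight $2^{0\cdot s}=1$ is already consistent across the strip. Alternatively, the whole estimate can be routed through the coretract $f\mapsto(\varphi_j(D)f)_j$ into a sequence-valued Morrey space and the retract $(g_j)_j\mapsto\sum_j\widetilde\varphi_j(D)g_j$, reducing the statement to the one-sided interpolation inclusion for vector-valued Morrey spaces; but the direct argument above keeps the proof inside the square-function framework.
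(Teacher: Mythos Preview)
This theorem is not proved in the present paper: it is quoted from \cite[Corollary~1.11]{YYZ13} and used as a black box in the proof of Theorem~\ref{thm:170426s-2}. There is therefore no in-paper proof to compare your attempt against.

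That said, your direct three-lines argument via $L^{q'}(B;\ell^{r'})$-duality is correct, and the bookkeeping you outline for $\delta$ closes cleanly: with $\|\rho\|_{L^{q'}(B)}\le|B|^{1/p-1/q}$ one gets $|\Phi(\theta)|\le|B|^{(1-\theta)(1/q_0-1/p_0)+\theta(1/q_1-1/p_1)}\cdot|B|^{(1/p-1/q)[(1-\theta)q'/q_0'+\theta q'/q_1']}=1$ straight from $(\ref{eq:151113-2})$. Two small points worth making explicit when you write it up: analyticity of $\Phi$ follows because, for each fixed $j$ and each bounded compactly supported $g$, the linear functional $u\mapsto\int_B\varphi_j(D)u\cdot g$ is continuous on ${\mathcal E}^{s_0}_{p_0q_0r_0}+{\mathcal E}^{s_1}_{p_1q_1r_1}$ (indeed $\varphi_j(D)u\in L^{q_k}_{\rm loc}$ with norm controlled by the endpoint norm), and the product with the entire function $z\mapsto 2^{js(z)}\overline{h_{j,z}}$ preserves holomorphy; and your argument nowhere uses the ratio condition $p_0/q_0=p_1/q_1$ from $(\ref{eq:151113-1})$, so the inclusion actually holds without it.

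By contrast, the proof in \cite{YYZ13} follows the retract route you sketch at the end: ${\mathcal E}^s_{pqr}$ is realized as a retract of a weighted sequence-valued Morrey space via the Littlewood--Paley map $f\mapsto(\varphi_j(D)f)_j$ and a left inverse, and the inclusion is inherited from the corresponding inclusion for those vector-valued spaces. Your approach trades that structural detour for an explicit analytic family of test functions; it is more self-contained and, as you observe, closer to the philosophy of the present paper, whose abstract emphasizes avoiding the underlying sequence spaces.
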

Remark that \eqref{eq:151113-11y-2} will be used 
in the proof of Theorem \ref{thm:170426s-2}.
As a corollary of 
(\ref{eq:Bergh}) to follow
and Theorem \ref{thm:170426s-2},
we have 
the corresponding result
for the first complex interpolation of Tribel-Lizorkin-Morrey spaces 
which refines \eqref{eq:151113-11y-2}.
\begin{theorem}\label{thm:151113-2a}
	Suppose that
	we have $13$ parameters
	$p_0$, $p_1$, $p$, $q_0$, 
$q_1$, $q$, $r$, $r_0$, $r_1$, $s$, $s_0$, $s_1$, 
and $\theta$
	satisfying 
	$(\ref{eq:151113-1})$ and $(\ref{eq:151113-2})$. Then
		\begin{equation}\label{eq:151113-11y}
		[
		{\mathcal E}^{s_0}_{p_0q_0r_0},
		{\mathcal E}^{s_1}_{p_1q_1r_1}]_\theta
		=
		\overline
		{
		{\mathcal E}^{s_0}_{p_0q_0r_0}
                \cap
		{\mathcal E}^{s_1}_{p_1q_1r_1}}^{{\mathcal E}^s_{p q r}}.
		\end{equation}
\end{theorem}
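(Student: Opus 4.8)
The plan is to deduce Theorem~\ref{thm:151113-2a} from Theorem~\ref{thm:170426s-2} together with the abstract reiteration-type identity of Bergh referred to as \eqref{eq:Bergh}. The formula of Bergh states that for a compatible couple $(X_0,X_1)$ of Banach spaces and $\theta\in(0,1)$ one has $[X_0,X_1]_\theta=\overline{[X_0,X_1]^\theta}^{\,[X_0,X_1]^\theta}$, or more precisely that the first complex interpolation space coincides with the closure of $X_0\cap X_1$ inside the second complex interpolation space $[X_0,X_1]^\theta$. Since by Theorem~\ref{thm:170426s-2} we know $[{\mathcal E}^{s_0}_{p_0q_0r_0},{\mathcal E}^{s_1}_{p_1q_1r_1}]^\theta={\mathcal E}^s_{pqr}$ with equivalence of norms, substituting this into Bergh's formula immediately gives
\[
[{\mathcal E}^{s_0}_{p_0q_0r_0},{\mathcal E}^{s_1}_{p_1q_1r_1}]_\theta
=\overline{{\mathcal E}^{s_0}_{p_0q_0r_0}\cap{\mathcal E}^{s_1}_{p_1q_1r_1}}^{\,{\mathcal E}^s_{pqr}},
\]
which is precisely \eqref{eq:151113-11y}.

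Concretely, I would proceed in three short steps. First, recall the statement of Bergh's formula \eqref{eq:Bergh} as it appears earlier in the paper, checking that its hypotheses (namely that $({\mathcal E}^{s_0}_{p_0q_0r_0},{\mathcal E}^{s_1}_{p_1q_1r_1})$ is a compatible couple of Banach spaces, with the common ambient space ${\mathcal S}'$) are met here. Second, invoke Theorem~\ref{thm:170426s-2} to replace the abstract space $[{\mathcal E}^{s_0}_{p_0q_0r_0},{\mathcal E}^{s_1}_{p_1q_1r_1}]^\theta$ by the concrete space ${\mathcal E}^s_{pqr}$; because the identification is with equivalence of norms, the closures taken with respect to either norm coincide as sets, so the closure $\overline{\,\cdot\,}$ in Bergh's formula may be read as the closure in ${\mathcal E}^s_{pqr}$. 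Third, combine the two to obtain \eqref{eq:151113-11y}; one should also note in passing that the right-hand side is contained in ${\mathcal E}^s_{pqr}$, which recovers and refines the inclusion \eqref{eq:151113-11y-2} of Yang--Yuan--Zhuo, since a closure of a subset of ${\mathcal E}^s_{pqr}$ is again a subset of ${\mathcal E}^s_{pqr}$.

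The argument is essentially a two-line deduction once Theorem~\ref{thm:170426s-2} and formula \eqref{eq:Bergh} are in hand, so there is no genuine obstacle internal to this proof. The only point requiring a little care is the interplay between the two norms: Theorem~\ref{thm:170426s-2} gives equivalence of norms rather than isometry, so one must be slightly careful that Bergh's formula, which is naturally stated for the intrinsic norm on $[X_0,X_1]^\theta$, transfers correctly to the equivalent ${\mathcal E}^s_{pqr}$-norm. This is immediate because norm equivalence preserves both density and closedness, hence $\overline{{\mathcal E}^{s_0}_{p_0q_0r_0}\cap{\mathcal E}^{s_1}_{p_1q_1r_1}}^{\,[{\mathcal E}^{s_0}_{p_0q_0r_0},{\mathcal E}^{s_1}_{p_1q_1r_1}]^\theta}=\overline{{\mathcal E}^{s_0}_{p_0q_0r_0}\cap{\mathcal E}^{s_1}_{p_1q_1r_1}}^{\,{\mathcal E}^s_{pqr}}$ as sets. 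The heavy lifting has all been done in Theorem~\ref{thm:170426s-2}; Theorem~\ref{thm:151113-2a} is its clean corollary.
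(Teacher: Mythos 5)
Your proposal is correct and is exactly the paper's argument: Theorem \ref{thm:151113-2a} is obtained by substituting the identification $[{\mathcal E}^{s_0}_{p_0q_0r_0},{\mathcal E}^{s_1}_{p_1q_1r_1}]^\theta={\mathcal E}^{s}_{pqr}$ from Theorem \ref{thm:170426s-2} into Bergh's formula \eqref{eq:Bergh}, with the norm-equivalence remark justifying that the closure may be taken in the ${\mathcal E}^s_{pqr}$-norm. (Your first rendering of Bergh's formula contains a slip --- the closure should be of $X_0\cap X_1$, not of $[X_0,X_1]^\theta$ --- but you immediately state and use the correct version.)
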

Meanwhile,
Nakamura, the first and the third authors
obtained the description of the interpolation of
diamond Morrey spaces in \cite{HNS15},
which we describe below.
Let $1< q \le p < \infty$.
The space
$\overset{\diamond}{{\mathcal M}}{}^p_{q}$
denotes the closure with respect to
${\mathcal M}^p_{q}$
of the set of all smooth functions $f$
such that $\partial^\alpha f \in {\mathcal M}^p_{q}$
for all multi-indices $\alpha$
\cite{YSY15}.

Due to the result by Mazzucato \cite[Proposition 4.1]{Mazzucato01},
we see that
\[
{\mathcal M}^p_q={\mathcal E}^0_{pq2}.
\]
Thus, $\overset{\diamond}{\mathcal E}{}^0_{pq2}=
\overset{\diamond}{\mathcal M}{}^p_q$
with norm equivalence
and 
Theorem \ref{thm:151113-2} recaptures
the interpolation of
$\overset{\diamond}{\mathcal M}{}^{p_0}_{q_0}$
and
$\overset{\diamond}{\mathcal M}{}^{p_1}_{q_1}$
as the special case of
$r_0=r_1=r=2$
and
$s_0=s_1=s=0$. Thus, 
we see that
Theorem \ref{thm:151113-2} extends
\cite[Theorem 1.9]{HNS15}

One of the difficulties in dealing with the space
$\overset{\diamond}{{\mathcal M}}{}^p_q$
with $1<q<p<\infty$
is that this closed subspace does not enjoy
the lattice property unlike many other important subspaces
defined in
\cite{CDM10,ST-Tokyo,YSY15}.

Let us now recall some progress in interpolation theory
of Morrey spaces. 
The earlier result about the interpolation of Morrey spaces 
can be traced back in \cite{Stampacchia65}.
In \cite[p. 35]{CPP98}
Cobos, Peetre, and Persson pointed out that
\[
[{\mathcal M}^{p_0}_{q_0},{\mathcal M}^{p_1}_{q_1}]_\theta
\subset
{\mathcal M}^p_q
\]
whenever
$1 \le q_0 \le p_0<\infty$,
$1 \le q_1 \le p_1<\infty$,
and
$1 \le q \le p<\infty$
satisfy
\begin{equation}\label{eq:150531-1}
\frac{1}{p}=\frac{1-\theta}{p_0}+\frac{\theta}{p_1}, \quad
\frac{1}{q}=\frac{1-\theta}{q_0}+\frac{\theta}{q_1}.
\end{equation}
A counterexample by Blasco, Ruiz, and Vega \cite{BRV99,RuVe95}, shows that if 
we assume 
(\ref{eq:150531-1}) only, then there exists a bounded linear operator $T$
from $\cM^{p_k}_{q_k}(\R^n)$ ($k=0,1$) to $L^1(\R^n)$, but $T$ is unbounded from $\cMpq(\R^n)$ to $L^1(\R^n)$.
By using the counterexample by Ruiz and Vega in  \cite{RuVe95}, 
Lemari\'e-Rieusset \cite[Theorem 3(ii)]{Lemarie13} 
showed that
if an interpolation functor $F$ satisfies
$
F[{\mathcal M}^{p_0}_{q_0},{\mathcal M}^{p_1}_{q_1}]
=
{\mathcal M}^p_q
$
under the condition (\ref{eq:150531-1}),
then
\begin{equation}\label{eq:150531-2}
\frac{q_0}{p_0}=\frac{q_1}{p_1}
\end{equation}
holds.
Lemari\'e-Rieusset \cite{Lemarie13,Lemarie14} also showed that 
Morrey space is closed under
the second complex interpolation method, namely, 
\begin{equation}\label{eq:170427-1}
[{\mathcal M}^{p_0}_{q_0},{\mathcal M}^{p_1}_{q_1}]^\theta
		={\mathcal M}^p_q.
\end{equation}
Meanwhile, as for the interpolation result
under (\ref{eq:150531-1}) and (\ref{eq:150531-2})
by using the first Calder\'{o}n's complex interpolation functor,
Lu, Yang, and Yuan obtained the following description:
\begin{align}\label{eq:170402}
[{\mathcal M}^{p_0}_{q_0},{\mathcal M}^{p_1}_{q_1}]_\theta
=
\overline{{\mathcal M}^{p_0}_{q_0} \cap {\mathcal M}^{p_1}_{q_1}}^{{\mathcal M}^p_q}
\end{align}
in \cite[Theorem 1.2]{LuYangYuan13}.
Their result is 
in the setting of a metric measure space.
The generalization of 
the result of Lu et. al and Lemari\'e-Rieusset
in the setting of generalized Morrey spaces and 
generalized Orlicz-Morrey spaces can be seen in 
\cite{HS}. 
 The first and third authors \cite{HS2} also obtain 
a refinement of  \eqref{eq:170402}  as follows: 
\begin{equation}\label{eq:170427-2}
[{\mathcal M}^{p_0}_{q_0},{\mathcal M}^{p_1}_{q_1}]_\theta
=
\left\{f \in {\mathcal M}^p_q\,:\,
\lim\limits_{a \to 0^+}\|(1-\chi_{[a,a^{-1}]}(|f|))f\|_{{\mathcal M}^p_q}=0
\right\}.
\end{equation}

The complex interpolation of variable exponent Morrey spaces can be seen \cite{MRZ16}. 
As for the real interpolation results,
Burenkov and Nursultanov obtained
an interpolation result in local Morrey spaces \cite{BuNu10}
and  
their results are generalized
by Nakai and Sobukawa 
to $B^u_w$ setting \cite{NS}.	
In \cite{YYZ13},
Yang, Yuan, and Zhuo considered
the interpolation of smoothness Morrey spaces
considered in
\cite{HaSk14,HMS16,Ho11-1,Mazzucato01,NNS15,Sawano08-2,SaTa07-1,TaXu05,YaYu08-2,YaYu10-1,YSY10,YWY10,YSY15}.

Compared to the work \cite{YYZ13},
we believe that the main tool
is Lemma \ref{lem:151112-1},
where the function $``\log"$ plays the key role.
An experience obtained in \cite{HS}
shows that the function $\lq\lq\log"$ is essential
when we consider the complex interpolation functor.

Let us explain why the interpolation of Morrey spaces
are complicated unlike Lebesgue spaces.
From
(\ref{eq:170427-1})
and
(\ref{eq:170427-2})
we learn that 
the first complex interpolation functor behaves differently from Lebesgue spaces.
This problem comes basically from the fact that
the Morrey norm ${\mathcal M}^p_q$
involves the supremum over all balls $B(x,R)$.
Due to this fact, we have many difficulties when $1<q<p<\infty$, namely:
\begin{enumerate}
\item

The Morrey space ${\mathcal M}^p_q$ 
is not included in $L^1+L^\infty$;
see \cite[Section 6]{HS2}.
\item
The Morrey space ${\mathcal M}^p_q$ is not reflexive; see \cite[Example 5.2]{ST-Tokyo}
and \cite[Theorem 1.3]{YaYu11}.
\item
Let
$p_0,p_1,p,q_0,q_1,q$
satisfy
$(\ref{eq:151113-1})$.
Let $q<\tilde{q}<p$.
The spaces
	$C^\infty_{\rm c}$,
${\mathcal M}^p_{\tilde{q}}$,
${\mathcal M}^{p_0}_{q_0} \cap {\mathcal M}^{p_1}_{q_1}$
are
not dense in 
 ${\mathcal M}^p_q$;
see \cite[Proposition 2.16]{Triebel14},
\cite{Sawano17}
and
\cite{HS,YSY15},
respectively.
\item
The Morrey space ${\mathcal M}^p_q$ is not separable; 
see \cite[Proposition 2.16]{Triebel14}.
\end{enumerate}
These facts prevent us from
using many theorems
in the textbook in \cite{BeLo76}.

We organize the remaining part of this paper as follows:
Section \ref{section:2} collects some preliminary facts
such as the property of the complex interpolation
and the maximal inequalities for Morrey spaces.
We prove Theorems \ref{thm:151113-2} and \ref{thm:170426s-2}
in Section \ref{section:3}
except a key fact on $G$
defined
in Section \ref{section:3}.
This fact will be proved in Section \ref{ss331}.

\section{Preliminaries}
\label{section:2}

\subsection{Complex interpolation functors}

Let $E$ be a subset of ${\mathbb C}$ and $X$ be a Banach space,
and define
\begin{equation}
{\rm BC}(E,X)
:= 
\left\{ f : E \to X \, : \,
f \mbox{ is continous and satisfies }
\sup_{z \in E}\| f(z) \|_{X}<\infty
\right\}.
\end{equation}
If $E$ is an open set in $\C$,
then ${\mathcal O}(E,X)$ denotes the set of all holomorphic functions
on $E$ whose value assumes $X$.
\begin{definition}\label{defi:150824-20}
Let
$U:=  \{ z \in {\mathbb C} \, : \, 0 < {\rm Re}\,(z) < 1 \}$
and
$\overline{U}$ be its closure.
\end{definition}

We recall the definition of the complex interpolation functors as follows:
\begin{definition}[Calder\'{o}n's first complex interpolation space, \cite{BeLo76, Calderon3}]\label{defi:150824-19}
Let $(X_0,X_1)$ be a compatible couple 
of Banach spaces.
\begin{enumerate}
\item
The space
${\mathcal F}(X_0,X_1)$ is defined as the set 
of all functions $F :\bar{U} \to X_0+X_1$
such that 
\begin{enumerate}
\item $F\in {\rm BC}(\overline{U},X_0+X_1)$,
\item $F \in {\mathcal O}(U,X_0+X_1)$,
\item the functions $t \in {\mathbb R} \mapsto F(j+it) \in X_j$ 
are bounded and continuous on ${\mathbb R}$ for $j=0,1$.
\end{enumerate}
The space ${\mathcal F}(X_0,X_1)$ is equipped with the norm
\begin{align*}
\|F\|_{{\mathcal F}(X_0,X_1)} 
:=  
\max \left\{ 
\sup\limits_{t\in {\mathbb R}} \|F(it)\|_{X_0}
, \
\sup\limits_{t\in {\mathbb R}} \|F(1+it)\|_{X_1}
\right\}.
\end{align*}
\item
Let $\theta \in (0,1)$.
Define the complex interpolation space $[X_0,X_1]_{\theta}$ with respect to $(X_0,X_1)$ to be the set 
of all functions $f\in X_0+X_1$ such
that $f=F(\theta)$ for some $F\in {\mathcal F}(X_0,X_1)$.
The norm on $[X_0,X_1]_{\theta}$ is defined by 
\begin{align*}
\|f\|_{[X_0,X_1]_{\theta}} :=   
\inf\{ \|F\|_{{\mathcal F}(X_0,X_1)} : f=F(\theta) \mathrm{\ for \ some \ } F \in {\mathcal F}(X_0,X_1) \}.
\end{align*}
\end{enumerate}
\end{definition}
According to \cite{Calderon3},
$[X_0,X_1]_{\theta}$ is a Banach space. See also \cite[Theorem 4.1.2]{BeLo76}.


Now, we recall the definition of
Calder\'{o}n's second complex interpolation space.
Let $X$ be a Banach space.
The space ${\rm Lip}({\mathbb R},X)$ is defined 
to be the set of all functions $F:{\mathbb R} \to X$
for which 
\[
\|F\|_{{\rm Lip}({\mathbb R},X)}
:=  
\sup_{-\infty<s<t<\infty}
\frac{\|F(t)-F(s)\|_X}{|t-s|}<\infty.
\]
\begin{definition}[Calder\'{o}n's second complex interpolation space, \cite{BeLo76, Calderon3}]
Suppose that we have a pair $(X_0,X_1)$ is a compatible couple of 
Banach spaces.
\begin{enumerate}
\item
Define 
${\mathcal G}(X_0,X_1)$ as the set 
of all functions $G:\overline{U} \to X_0+X_1$
such that 
\begin{enumerate}
\item 
$G$ is continuous on $\overline{U}$ and 
$\sup\limits_{z\in \overline{U}} 
\left\|\frac{G(z)}{1+|z|}\right\|_{X_0+X_1}<\infty$,
\item $G$ is holomorphic in $U$,
\item the functions 
\[
t \in {\mathbb R} \mapsto G(j+it)-G(j) \in X_j
\]
are Lipschitz continuous on ${\mathbb R}$ for $j=0,1$.
\end{enumerate}
The space ${\mathcal G}(X_0,X_1)$ is equipped with the norm
\begin{align}\label{eq:norm-G}
\|G\|_{{\mathcal G}(X_0,X_1)} 
:=  
\max \left\{ \|G(i\cdot)\|_{{\rm Lip}({\mathbb R}, X_0)}, \
\|G(1+i\cdot)\|_{{\rm Lip}({\mathbb R}, X_1)}
\right\}.
\end{align}
\item 
Let $\theta \in (0,1)$.
Define the complex interpolation space $[X_0,X_1]^{\theta}$ with respect to $(X_0,X_1)$ to be the set 
of all functions $f\in X_0+X_1$ such
that 
\begin{equation}\label{eq:170401-1}
f=G'(\theta)=\lim_{h \to 0}\frac{G(\theta+h)-G(\theta)}{h}
\end{equation} 
for some $G\in {\mathcal G}(X_0,X_1)$.
The norm on $[X_0,X_1]^{\theta}$ is defined by 
\begin{align*}
\|f\|_{[X_0,X_1]^{\theta}} :=   
\inf\{ \|G\|_{{\mathcal G}(X_0,X_1)} : f=G'(\theta) \mathrm{\ for \ some \ } G \in {\mathcal G}(X_0,X_1) \}.
\end{align*}
The space $[X_0,X_1]^{\theta}$
is called Calder\'{o}n's second complex interpolation space,
or the upper complex interpolation space
of $(X_0,X_1)$.
\end{enumerate}\label{defi:150824-18}
\end{definition}

One of the fundamental relations
between the first and the second complex interpolation is as follows:
\begin{equation}\label{eq:Bergh}
[X_0,X_1]_\theta=\overline{X_0 \cap X_1}{}^{[X_0,X_1]^\theta}
\end{equation}
according to the result by Bergh \cite{Be79}.
This relation explains
why we start by calculating the second interpolation
in the proof of Theorems \ref{thm:170426s-2}
and \ref{thm:151113-2a}.

If we combine Lemmas \ref{lem:dense0} and \ref{lem:dense1}
below,
we see that (\ref{eq:Bergh}) follows.
\begin{lemma}{\rm \cite{Be79}}
\label{lem:dense0}
Let $x \in X_0 \cap X_1$.
Then
$\|x\|_{[X_0,X_1]^\theta}=\|x\|_{[X_0,X_1]_\theta}$.
\end{lemma}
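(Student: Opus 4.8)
The plan is to prove separately the two inequalities
$\|x\|_{[X_0,X_1]^\theta}\le\|x\|_{[X_0,X_1]_\theta}$ and
$\|x\|_{[X_0,X_1]_\theta}\le\|x\|_{[X_0,X_1]^\theta}$; only the second one will use the hypothesis $x\in X_0\cap X_1$.

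The first inequality in fact holds for every $f\in[X_0,X_1]_\theta$. Given $F\in{\mathcal F}(X_0,X_1)$ with $F(\theta)=x$, I would pass to the primitive
\[
G(z):=\int_{[\theta,z]}F(\zeta)\,d\zeta\qquad(z\in\overline{U}),
\]
the integral being taken along the segment joining $\theta$ to $z$, which stays in $\overline{U}$ by convexity. Cauchy's theorem for a function holomorphic in the interior of a triangle and continuous up to its boundary gives $G(j+it)-G(j+is)=i\int_s^tF(j+i\tau)\,d\tau$, so $t\mapsto G(j+it)-G(j)$ is Lipschitz into $X_j$ with constant at most $\|F\|_{{\mathcal F}(X_0,X_1)}$; the linear growth of $G$ comes from the boundedness of $F$ on $\overline{U}$ (part of the definition of ${\mathcal F}(X_0,X_1)$), and $G'(\theta)=F(\theta)=x$. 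Hence $G\in{\mathcal G}(X_0,X_1)$ with $\|G\|_{{\mathcal G}(X_0,X_1)}\le\|F\|_{{\mathcal F}(X_0,X_1)}$, and taking the infimum over $F$ gives the claim.

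For the reverse inequality I would regularize a symmetric difference quotient. Fix $\varepsilon>0$, pick $G\in{\mathcal G}(X_0,X_1)$ with $G'(\theta)=x$ and $\|G\|_{{\mathcal G}(X_0,X_1)}\le\|x\|_{[X_0,X_1]^\theta}+\varepsilon$, and for $\delta\in(0,1)$ set
\[
F_\delta(z):=\frac{1}{i\delta^2}\int_{-\delta}^{\delta}\mathrm{sgn}(u)\,G(z+iu)\,du\qquad(z\in\overline{U}).
\]
This $F_\delta$ is manifestly continuous on $\overline{U}$ and holomorphic in $U$; in fact it is the complex derivative of the average of $G$ against the triangular mollifier of width $2\delta$ in the imaginary direction. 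Since $\int_{-\delta}^{\delta}\mathrm{sgn}(u)\,du=0$ one may replace $G(j+i(t+u))$ by $G(j+i(t+u))-G(j+it)$ in the boundary integral, and the Lipschitz bound for $t\mapsto G(j+it)-G(j)$ together with $\int_{-\delta}^{\delta}|u|\,du=\delta^2$ yields $\|F_\delta(j+it)\|_{X_j}\le\|G\|_{{\mathcal G}(X_0,X_1)}$; a Phragm\'{e}n--Lindel\"{o}f argument applied to $\phi\circ F_\delta$ with $\phi\in(X_0+X_1)^*$ upgrades this to $\|F_\delta(z)\|_{X_0+X_1}\le\|G\|_{{\mathcal G}(X_0,X_1)}$ on all of $\overline{U}$. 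Thus $F_\delta\in{\mathcal F}(X_0,X_1)$ and $\|F_\delta\|_{{\mathcal F}(X_0,X_1)}\le\|G\|_{{\mathcal G}(X_0,X_1)}$, while a Taylor expansion of $G$ at the interior point $\theta$ gives $F_\delta(\theta)\to G'(\theta)=x$ in $X_0+X_1$ as $\delta\to0$.

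The main obstacle is the passage $\delta\to0$. The family $\{F_\delta\}$ need not be Cauchy in ${\mathcal F}(X_0,X_1)$ — the boundary traces of $G$ are merely Lipschitz, not $C^1$, so their regularized difference quotients need not converge in $X_j$ — and one cannot correct $F_\delta(\theta)$ to $x$ by a small perturbation, because $F_\delta(\theta)$ need not lie in $X_0\cap X_1$. Here I would run a normal-families argument in the bidual: $\{F_\delta\}$ is a uniformly bounded family of $(X_0+X_1)^{**}$-valued maps, so Banach--Alaoglu together with Montel's and Vitali's theorems applied to $\phi\circ F_\delta$, $\phi\in(X_0+X_1)^*$, produce a subnet converging pointwise in the weak-$*$ topology to a map $F\colon\overline{U}\to(X_0+X_1)^{**}$ which is weak-$*$ holomorphic in $U$, satisfies $\|F(j+it)\|_{X_j^{**}}\le\|G\|_{{\mathcal G}(X_0,X_1)}$, and — because $F_\delta(\theta)\to x$ in norm — has $F(\theta)=x$. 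Since $x\in X_0\cap X_1$, a Goldstine/local-reflexivity descent turns this bidual competitor into an honest $F\in{\mathcal F}(X_0,X_1)$ with $F(\theta)=x$ and $\|F\|_{{\mathcal F}(X_0,X_1)}$ within $\varepsilon'$ of $\|G\|_{{\mathcal G}(X_0,X_1)}$; letting $\varepsilon,\varepsilon'\to0$ completes the proof. This descent step is the technical heart of the matter and is the content of \cite{Be79}.
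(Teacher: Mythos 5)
The paper does not actually prove Lemma \ref{lem:dense0}; it is quoted from Bergh's paper \cite{Be79} and used as a black box, so your proposal has to stand on its own. The first half does: the passage from $F\in{\mathcal F}(X_0,X_1)$ to the primitive $G(z)=\int_{[\theta,z]}F$ is the standard argument for $\|x\|_{[X_0,X_1]^\theta}\le\|x\|_{[X_0,X_1]_\theta}$, and your mollified difference quotient $F_\delta$ is also correctly set up: the cancellation $\int_{-\delta}^\delta\mathrm{sgn}(u)\,du=0$ plus the Lipschitz boundary data give $\|F_\delta\|_{{\mathcal F}(X_0,X_1)}\le\|G\|_{{\mathcal G}(X_0,X_1)}$, and $F_\delta(\theta)\to G'(\theta)=x$ in $X_0+X_1$.

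The genuine gap is the last step, and it is not a technicality: what you have shown at that point is exactly that $x$ lies in the $X_0+X_1$-closure of the ball of radius $\|G\|_{{\mathcal G}(X_0,X_1)}$ in $[X_0,X_1]_\theta$. Since the unit ball of $[X_0,X_1]_\theta$ is in general \emph{not} closed in $X_0+X_1$ (the lower method lacks the Fatou property — this is precisely why $[X_0,X_1]^\theta$ can be strictly larger than $[X_0,X_1]_\theta$), the implication from this to $\|x\|_{[X_0,X_1]_\theta}\le\|G\|_{{\mathcal G}(X_0,X_1)}$ is the entire content of Bergh's theorem, and it is exactly here that the hypothesis $x\in X_0\cap X_1$ must do quantitative work. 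Your proposed mechanism does not supply it: the weak-$*$ cluster point $F$ is a competitor for the couple $(X_0^{**},X_1^{**})$ (with boundary values that are moreover only bounded, not continuous, in $X_j^{**}$), so at best it yields $\|x\|_{[X_0^{**},X_1^{**}]_\theta}\le\|G\|_{{\mathcal G}(X_0,X_1)}$; identifying this with $\|x\|_{[X_0,X_1]_\theta}$ for $x\in X_0\cap X_1$ is a statement of the same depth as the lemma itself, and ``Goldstine/local reflexivity'' gives finite-dimensional almost-isometric embeddings, not a way to replace an analytic $X^{**}$-valued function by an $X$-valued one with the same two boundary norms. You acknowledge as much by writing that ``this descent step \dots is the content of \cite{Be79}'' — which makes the argument circular as a proof of the cited result. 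To close the gap you would need Bergh's actual device for converting the family $\{F_\delta\}$ (or the original $G$) into a single admissible $F\in{\mathcal F}(X_0,X_1)$ with $F(\theta)=x$, rather than a limit taken in a weaker topology.
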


\begin{lemma}
{\rm \cite[Theorem 4.22 (a)]{BeLo76}}
\label{lem:dense1}
The space $X_0\cap X_1$ is dense in $[X_0,X_1]_\theta$.
\end{lemma}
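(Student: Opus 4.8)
The plan is to produce, for a given $f\in[X_0,X_1]_\theta$, a sequence in $X_0\cap X_1$ converging to $f$ in the $[X_0,X_1]_\theta$-norm, by applying a Gaussian regularization followed by a vertical mollification to an extremal function $F\in\mathcal F(X_0,X_1)$ with $F(\theta)=f$. The whole scheme rests on the trivial remark, immediate from Definition~\ref{defi:150824-19}, that $\|G(\theta)\|_{[X_0,X_1]_\theta}\le\|G\|_{\mathcal F(X_0,X_1)}$ for every $G\in\mathcal F(X_0,X_1)$; thus convergence in $\mathcal F(X_0,X_1)$ transfers to convergence of the values at $\theta$, and it suffices to produce such $G$'s with $G(\theta)\in X_0\cap X_1$.

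First I would fix $F\in\mathcal F(X_0,X_1)$ with $F(\theta)=f$ and set $F_\delta(z):=e^{\delta(z-\theta)^2}F(z)$ for a fixed $\delta>0$. Since $\mathrm{Re}\,(z-\theta)^2=(\mathrm{Re}\,z-\theta)^2-(\mathrm{Im}\,z)^2$, the factor $e^{\delta(z-\theta)^2}$ is bounded by $e^{\delta}$ on $\overline{U}$, equals $1$ at $z=\theta$, and decays like $e^{-\delta(\mathrm{Im}\,z)^2}$; hence $F_\delta\in\mathcal F(X_0,X_1)$, $F_\delta(\theta)=f$, $\|F_\delta\|_{\mathcal F}\le e^{\delta}\|F\|_{\mathcal F}$, and the boundary traces $t\mapsto F_\delta(j+it)$ lie in $C_0(\R,X_j)\cap L^1(\R,X_j)$ for $j=0,1$; in particular they are uniformly continuous. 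Then I would mollify in the imaginary direction: $F_{\delta,N}:=F_\delta*\Phi_N$, the convolution in the $\mathrm{Im}$-variable with a Fej\'er kernel $\Phi_N$ (so $\Phi_N\ge0$, $\int\Phi_N=1$, and $\widehat{\Phi_N}$ is supported in $[-N,N]$). Routine estimates give $F_{\delta,N}\in\mathcal F(X_0,X_1)$ with $\|F_{\delta,N}\|_{\mathcal F}\le\|F_\delta\|_{\mathcal F}$, and uniform continuity of the traces of $F_\delta$ yields $F_{\delta,N}\to F_\delta$ in $\mathcal F(X_0,X_1)$ as $N\to\infty$.

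The crucial step is to verify that $F_{\delta,N}(\theta)\in X_0\cap X_1$. Since $F_\delta(j+i\cdot)\in L^1(\R,X_j)$, so is $F_{\delta,N}(j+i\cdot)$, and its $X_j$-valued Fourier transform is supported in $[-N,N]$; holomorphy of $F_{\delta,N}$ on $U$ together with Fourier uniqueness then forces the Paley--Wiener-type representation
\[
F_{\delta,N}(z)=\int_{-N}^{N}e^{z\xi}\,\nu(\xi)\,d\xi\qquad(z\in\overline{U}),
\]
where $\nu\in C([-N,N],X_0+X_1)$ is, up to a constant, the Fourier transform of $F_{\delta,N}(i\cdot)$ — hence $\nu(\xi)\in X_0$ — while $e^{\xi}\nu(\xi)$ is correspondingly the Fourier transform of $F_{\delta,N}(1+i\cdot)$ — hence $e^{\xi}\nu(\xi)\in X_1$, i.e. $\nu(\xi)\in X_1$. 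Thus $\nu$ takes values in $X_0\cap X_1$, so that $F_{\delta,N}(\theta)=\int_{-N}^{N}e^{\theta\xi}\nu(\xi)\,d\xi$ is a Bochner integral over a compact interval of a continuous $(X_0\cap X_1)$-valued map, and therefore belongs to $X_0\cap X_1$. Finally, $F_\delta-F_{\delta,N}\in\mathcal F(X_0,X_1)$ takes the value $f-F_{\delta,N}(\theta)$ at $\theta$, so the opening remark gives $\|f-F_{\delta,N}(\theta)\|_{[X_0,X_1]_\theta}\le\|F_\delta-F_{\delta,N}\|_{\mathcal F}\to0$, which proves the density.

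I expect the main obstacle to be the crucial step, namely showing that the ``Fourier coefficient'' $\nu(\xi)$ lands in the intersection $X_0\cap X_1$: this is exactly where holomorphy enters, tying the two boundary traces together through the relation $\nu_1=e^{\xi}\nu$, and it requires some care with vector-valued Fourier analysis — the Paley--Wiener representation, Fourier inversion for $L^1$-functions with values in $X_0$ and in $X_1$, and a boundary-uniqueness argument to identify $F_{\delta,N}$ with $\int_{-N}^{N}e^{z\xi}\nu(\xi)\,d\xi$ on the whole strip. By contrast, the supporting facts (that $F_\delta,F_{\delta,N}\in\mathcal F(X_0,X_1)$ and that $F_{\delta,N}\to F_\delta$) are routine once the Gaussian factor has been inserted to force decay and integrability of the boundary traces.
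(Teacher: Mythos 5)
The paper gives no proof of this lemma; it is quoted from Bergh--L\"ofstr\"om \cite[Theorem 4.2.2]{BeLo76}, and your argument is, in substance, the classical proof of that cited theorem: Gaussian damping to make the boundary traces decay, mollification by a band-limited approximate identity, and the key observation that the resulting ``Fourier coefficient'' $\nu(\xi)$ can be computed from either boundary trace (the two computations being tied together by holomorphy and a Schwarz-reflection boundary-uniqueness argument on the strip), hence lies in $X_0\cap X_1$. The only cosmetic differences from the textbook presentation are that you keep the continuous Paley--Wiener representation $\int_{-N}^{N}e^{z\xi}\nu(\xi)\,d\xi$ instead of periodizing to get finite sums $\sum_k e^{\delta z^2+\lambda_k z}a_k$ with $a_k\in X_0\cap X_1$, and that centering the Gaussian at $\theta$ lets you fix $\delta$ once and for all rather than sending $\delta\to0$; both are sound, and the steps you flag as delicate (vector-valued Fourier inversion for $L^1$ functions with continuous compactly supported transform, and the identification of $F_{\delta,N}$ with its Paley--Wiener integral on all of $\overline{U}$) go through by composing with functionals and reducing to the scalar case.
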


A direct consequence of Lemma \ref{lem:dense1} is:
\begin{lemma}\label{lem:dense2}
$[X_0,X_1]^\theta \subseteq 
\overline{X_0\cap X_1}^{X_0+X_1}$.
\end{lemma}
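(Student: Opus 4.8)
The plan is to derive this as an immediate consequence of Lemma \ref{lem:dense1} together with the general relation $[X_0,X_1]^\theta \subseteq X_0+X_1$, which is built into Definition \ref{defi:150824-18}: every $f \in [X_0,X_1]^\theta$ lies in $X_0+X_1$ by construction, since $f = G'(\theta)$ for some $G \in {\mathcal G}(X_0,X_1)$ and $G$ takes values in $X_0+X_1$ (so the difference quotients defining $G'(\theta)$, and hence their limit, lie in the Banach space $X_0+X_1$). Thus the content to be verified is only the density statement: every element of $[X_0,X_1]^\theta$ is approximable in the $X_0+X_1$ norm by elements of $X_0 \cap X_1$.

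First I would recall the elementary norm comparison $\|\cdot\|_{X_0+X_1} \le C_\theta \|\cdot\|_{[X_0,X_1]^\theta}$ for a constant depending on $\theta$ — this is standard (the second interpolation space embeds continuously into the sum), and follows by estimating $\|G'(\theta)\|_{X_0+X_1}$ via the Lipschitz bounds on $t \mapsto G(j+it)-G(j)$ and a three-lines-type argument, or one may simply cite it from \cite{BeLo76}. Next, given $f \in [X_0,X_1]^\theta$, I would invoke Lemma \ref{lem:dense0}: for $x \in X_0 \cap X_1$ we have $\|x\|_{[X_0,X_1]^\theta} = \|x\|_{[X_0,X_1]_\theta}$, so approximating $f$ in $X_0+X_1$ by elements of $X_0 \cap X_1$ is equivalent to the corresponding density fact once we can pass from the second interpolation norm to the first.

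Concretely, the cleanest route is: take $f \in [X_0,X_1]^\theta$; by Lemma \ref{lem:dense1}, $X_0 \cap X_1$ is dense in $[X_0,X_1]_\theta$; however $f$ need not lie in $[X_0,X_1]_\theta$, so instead I would argue directly that $f \in \overline{X_0 \cap X_1}^{X_0+X_1}$ by exhibiting an approximating sequence. One produces such a sequence from a representing function $G \in {\mathcal G}(X_0,X_1)$ with $\|G\|_{{\mathcal G}} \le \|f\|_{[X_0,X_1]^\theta} + \varepsilon$: the finite difference quotients $h^{-1}(G(\theta+h)-G(\theta))$ converge to $f$ in $X_0+X_1$ as $h \to 0$, and each quotient $h^{-1}(G(\theta+h)-G(\theta))$, being a difference of values of $G$ at interior points of $\overline U$, can itself be shown to lie in $\overline{X_0\cap X_1}^{X_0+X_1}$ (using that $G(z)$ for $z$ in the open strip is obtainable as a limit of elements of $[X_0,X_1]_\theta$, hence of $X_0\cap X_1$ by Lemma \ref{lem:dense1}). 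Chasing these two approximations gives $f \in \overline{X_0 \cap X_1}^{X_0+X_1}$.

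The main obstacle is the second step: showing that the values $G(z)$ of a function in ${\mathcal G}(X_0,X_1)$ at interior points, or at least the difference quotients at $\theta$, already lie in the closure of $X_0 \cap X_1$ inside $X_0+X_1$. This requires a little care because ${\mathcal G}$-functions are only controlled by the growth condition $\sup_{z}\|G(z)/(1+|z|)\|_{X_0+X_1}<\infty$ and the boundary Lipschitz conditions, rather than boundedness. I expect this to be handled by the same approximation device used to prove Lemma \ref{lem:dense1} in \cite{BeLo76} — writing $G(\theta)$, $G(\theta+h)$ as suitable averages/limits involving the Poisson-type kernel for the strip — or, more economically, by simply reducing to Lemma \ref{lem:dense1} applied to the related function $\widetilde G(z) := h^{-1}(G(z+h)-G(z))$, which after normalization sits in a bounded family and whose value at $\theta$ is the desired difference quotient. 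Once that reduction is in place, the lemma follows without further difficulty.
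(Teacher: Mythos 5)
Your proposal is correct and follows essentially the same route as the paper: the paper also forms the difference quotients $F_j(z)=\frac{G(z+ij^{-1})-G(z)}{ij^{-1}}$, notes $F_j\in{\mathcal F}(X_0,X_1)$ so that $F_j(\theta)\in[X_0,X_1]_\theta$, lets $j\to\infty$ using \eqref{eq:170401-1}, and concludes via Lemma \ref{lem:dense1} and the continuous embedding $[X_0,X_1]_\theta\subset X_0+X_1$. The only detail you leave implicit is that the increment $h$ must be taken purely imaginary so that $z\mapsto G(z+h)$ is still defined on $\overline{U}$ and the boundary Lipschitz conditions of Definition \ref{defi:150824-18} give the boundedness and continuity of $\widetilde G$ on the lines ${\rm Re}\,z=0,1$.
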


\begin{proof}
We observe that
$[X_0,X_1]^\theta \subset \overline{[X_0,X_1]_\theta}^{X_0+X_1}$
from the definition of $[X_0,X_1]^\theta$;
see (\ref{eq:170401-1}).
In fact, for $f\in [X_0, X_1]^\theta$, 
there exists $G\in \cG(X_0,X_1)$ such that $f=G'(\theta)$. We define 
\[
F_j(z):=\frac{G(z+ij^{-1})-G(z)}{ij^{-1}}
\]
for $j\in \N$ and $z\in \overline{S}$. 
Then, $F_j(\theta) \in [X_0,X_1]_\theta$ and according to (\ref{eq:170401-1}), we have $f\in \overline{[X_0,X_1]_\theta}^{X_0+X_1}$.

Since
$[X_0,X_1]_\theta=\overline{X_0 \cap X_1}^{[X_0,X_1]_\theta}
\subset \overline{X_0 \cap X_1}^{X_0+X_1}$
from Lemma \ref{lem:dense1},
it follows that
$\overline{[X_0,X_1]_\theta}^{X_0+X_1}
\subset \overline{X_0 \cap X_1}^{X_0+X_1}$.
Putting together these observations,
we obtain the desired result.
\end{proof}

\subsection{Operators on Morrey spaces}

Let ${\mathcal B}$ denote the set of all balls in ${\mathbb R}^n$.
We recall the definition and the fundamental property of the Hardy-Littlewood maximal operator
$M$.
 \begin{definition}[Hardy-Littlewood maximal operator]
 For a measurable function $f$,
 define a function $Mf$ by:
 \begin{equation}\label{eq:maximal operator}
 M f(x):=
 \sup_{B \in {\mathcal B}}\frac{\chi_B(x)}{|B|}\int_B |f(y)|\,d y.
 \end{equation}
 The mapping $M:f \mapsto Mf$
 is called the Hardy-Littlewood maximal operator.
 \index{Hardy-Littlewood maximal operator@Hardy-Littlewood maximal operator}
 \label{defi:maximal}
 \end{definition}

 \begin{theorem}\label{thm:vec-maxi}
 {\rm \cite[Theorem 2.4]{SaTa05},
 {\rm \cite[Lemma 2.5]{TaXu05}}}
 Suppose that the parameters $p,q,r$ satisfy
 \[
 1<q \le p< \infty  \ {\rm and } \  1<r \le \infty.
 \]
 Then
 \begin{equation}\label{eq:150928-71}
 \left\|\left(\sum_{j=1}^\infty (M f_j){}^r\right)^{\frac1u}
 \right\|_{{\mathcal M}^p_q}\lesssim
 \left\|\left(\sum_{j=1}^\infty|f_j|^r\right)^{\frac1u}
 \right\|_{{\mathcal M}^p_q}
 \end{equation}
 for every sequence of measurable functions $\{f_j\}_{j=0}^\infty$.
 When $r=\infty$, then $(\ref{eq:150928-71})$ reads;
 \begin{equation}
 \left\|\sup_{j \in {\mathbb Z}} Mf_j
 \right\|_{{\mathcal M}^p_q}\lesssim
 \left\|\sup_{j \in {\mathbb Z}} |f_j|
 \right\|_{{\mathcal M}^p_q}.
 \end{equation}
 \end{theorem}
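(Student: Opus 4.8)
The plan is to reduce this vector-valued Morrey estimate to two classical inputs and transfer them to the Morrey scale by a near/far decomposition adapted to each ball. The inputs are: the scalar boundedness of $M$ on $\cMpq$ for $1<q\le p<\infty$ (Chiarenza--Frasca), and the Euclidean Fefferman--Stein vector-valued maximal inequality on $L^q(\R^n)$, valid for $1<q<\infty$ and $1<r\le\infty$. Fix a ball $B=B(x_0,R)$ and abbreviate $F:=\left(\sum_j|f_j|^r\right)^{1/r}$; by the very definition of the Morrey norm it suffices to bound $|B|^{1/p-1/q}\left\|\left(\sum_j (Mf_j)^r\right)^{1/r}\right\|_{L^q(B)}$ by a constant times $\|F\|_{\cMpq}$, uniformly in $B$. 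Writing $f_j=f_j\chi_{2B}+f_j\chi_{(2B)^c}$ and using the subadditivity of $M$ together with the triangle inequality in $\ell^r$, I split the target into a near part and a far part.

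For the near part I would apply the classical $L^q(\R^n)$ Fefferman--Stein inequality to the functions $g_j:=f_j\chi_{2B}$, and then pass back to the Morrey norm on the enlarged ball:
\[
\left\|\left(\sum_j (M(f_j\chi_{2B}))^r\right)^{1/r}\right\|_{L^q(B)}
\le
\left\|\left(\sum_j (Mg_j)^r\right)^{1/r}\right\|_{L^q(\R^n)}
\lesssim
\|F\|_{L^q(2B)}
\le |2B|^{1/q-1/p}\|F\|_{\cMpq}.
\]
Since $|2B|\sim|B|$, the prefactor $|B|^{1/p-1/q}$ absorbs this cleanly.

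For the far part, the key geometric observation is that for $x\in B$ any averaging ball that contains $x$ and meets $(2B)^c$ has radius $\gtrsim R$ and is comparable to a dyadic dilate of $B$, so that $M(f_j\chi_{(2B)^c})(x)\lesssim \sup_{k\ge0}\frac{1}{|2^{k+1}B|}\int_{2^{k+1}B}|f_j(y)|\,dy$. Taking the $\ell^r$-norm in $j$ and invoking Minkowski's integral inequality (in the direction that pulls the $\ell^r$-norm inside the integral) converts the $\ell^r$-sum of averages into an average of $F$:
\[
\left(\sum_j\Big(\tfrac{1}{|2^{k+1}B|}\int_{2^{k+1}B}|f_j|\Big)^r\right)^{1/r}
\le
\tfrac{1}{|2^{k+1}B|}\int_{2^{k+1}B}F(y)\,dy
\le |2^{k+1}B|^{-1/p}\|F\|_{\cMpq},
\]
the last step being Hölder followed by the Morrey bound. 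Summing the geometric series $\sum_{k\ge0}(2^{(k+1)n}|B|)^{-1/p}\sim|B|^{-1/p}$, which converges precisely because $1/p>0$, yields the pointwise bound $\lesssim|B|^{-1/p}\|F\|_{\cMpq}$ on $B$; hence $|B|^{1/p-1/q}\|\cdots\|_{L^q(B)}\lesssim\|F\|_{\cMpq}$. Taking the supremum over all balls $B$ completes the case $1<r<\infty$.

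The case $r=\infty$ needs no decomposition: from $|f_j|\le G:=\sup_k|f_k|$ and the monotonicity of $M$ we get $\sup_j Mf_j\le MG$, so the scalar boundedness of $M$ on $\cMpq$ gives $\|\sup_j Mf_j\|_{\cMpq}\le\|MG\|_{\cMpq}\lesssim\|G\|_{\cMpq}$. I expect the only delicate point to be the far part: one must verify carefully that averaging balls centred away from $B$ but meeting its double are comparable to the dyadic dilates $2^{k+1}B$, and that Minkowski's integral inequality is used in the correct direction, so that the dyadic sum genuinely telescopes into a convergent geometric series governed by the Morrey exponent $1/p$.
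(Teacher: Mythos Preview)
Your argument is correct. The paper itself does not prove this result; it quotes it from \cite{SaTa05} and \cite{TaXu05} and uses it as a known tool, so there is no in-paper proof to compare against. The near/far decomposition you give --- Fefferman--Stein on $L^q$ for the local piece $f_j\chi_{2B}$ and a dyadic tail estimate combined with Minkowski's integral inequality and the Morrey bound for the far piece --- is the standard way to lift vector-valued maximal bounds from $L^q$ to the Morrey scale and is in line with the arguments in the cited references.

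One small expository point: in the far part you implicitly replace $\sup_{k\ge0}$ by $\sum_{k\ge0}$ before taking the $\ell^r$-norm in $j$ (this is what makes the geometric series $\sum_k 2^{-kn/p}$ appear). The step
\[
\Bigl\|\sup_{k} a_{\cdot,k}\Bigr\|_{\ell^r}
\;\le\;
\Bigl\|\sum_{k} a_{\cdot,k}\Bigr\|_{\ell^r}
\;\le\;
\sum_{k}\|a_{\cdot,k}\|_{\ell^r}
\]
is elementary but is the hinge of that part of the argument, so it is worth making it explicit.
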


	As a direct consequence of Theorem \ref{thm:vec-maxi}, we have the following lemma.
	\begin{lemma}\label{lem:161017-1}
		Let $1<q\le p<\infty$, $1< r \le \infty$, and $J\in \N $.
		Let $\{g_j\}_{j=J}^\infty$ be a sequence of measurable functions such that 
		\[
		\left\| 
		\left(
		\sum_{j=J}^{\infty}
		|g_j|^r
		\right)^{\frac{1}{r}}
		\right\|_{\cM^p_q}<\infty.
		\]  
		Then
		\begin{align}\label{eq:161017-1}
		\left\|
		\left(
		\sum_{l=1}^{\infty}
		\left|
		\varphi_l(D)
		\left(
		\sum_{j=J}^{\infty}
		\varphi_j(D)g_j
		\right)
		\right|^r
		\right)^{\frac{1}{r}}
		\right\|_{\cM^p_q}
		\lesssim
		\left\| 
		\left(
		\sum_{j=J}^{\infty}
		|g_j|^r
		\right)^{\frac{1}{r}}
		\right\|_{\cM^p_q}.
		\end{align}
	\end{lemma}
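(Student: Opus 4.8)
The plan is to reduce the left-hand side to an expression controlled by the Hardy–Littlewood maximal operator and then invoke the Fefferman–Stein vector-valued inequality on Morrey spaces, Theorem \ref{thm:vec-maxi}. First I would exploit the almost-orthogonality of the Littlewood–Paley pieces: since $\varphi_l$ is supported in the annulus $\{2^{l-1}\le|\xi|\le 2^{l+1}\}$ (for $l\ge 1$) and $\varphi_j$ in the ball/annulus of comparable scale, the product $\varphi_l\varphi_j$ vanishes unless $|l-j|\le C$ for an absolute constant $C$ depending only on the choice of $\varphi$. Hence
\[
\varphi_l(D)\Bigl(\sum_{j=J}^\infty \varphi_j(D)g_j\Bigr)
=\sum_{\substack{j\ge J\\ |j-l|\le C}}\varphi_l(D)\varphi_j(D)g_j,
\]
a sum of at most $2C+1$ terms. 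So it suffices to bound, for each fixed shift $k$ with $|k|\le C$, the quantity $\bigl\|(\sum_l|\varphi_l(D)\varphi_{l+k}(D)g_{l+k}|^r)^{1/r}\bigr\|_{\cM^p_q}$, after which the triangle inequality in $\ell^r$ and in $\cM^p_q$ assembles the full estimate.

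Next I would dominate each single term pointwise by the maximal function. Writing $\psi_l:=\varphi_l\varphi_{l+k}$, the convolution kernel $\cF^{-1}\psi_l$ is an $L^1$-normalized bump at scale $2^{-l}$ with rapidly decaying tails, so there is the standard pointwise estimate $|\psi_l(D)h(x)|\lesssim M h(x)$ with constant independent of $l$ (this is the classical fact that convolution against an $L^1$-dilation of a fixed Schwartz function is controlled by $M$; see e.g.\ the references cited for Theorem \ref{thm:vec-maxi}). Applying this with $h=g_{l+k}$ gives $|\varphi_l(D)\varphi_{l+k}(D)g_{l+k}|\lesssim M g_{l+k}$ pointwise. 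Therefore
\[
\left\|\left(\sum_l|\varphi_l(D)\varphi_{l+k}(D)g_{l+k}|^r\right)^{1/r}\right\|_{\cM^p_q}
\lesssim
\left\|\left(\sum_l (M g_{l+k})^r\right)^{1/r}\right\|_{\cM^p_q}
\lesssim
\left\|\left(\sum_l |g_{l+k}|^r\right)^{1/r}\right\|_{\cM^p_q},
\]
where the last step is exactly \eqref{eq:150928-71}. Re-indexing $l+k\mapsto j$ and summing over the finitely many $k$ yields \eqref{eq:161017-1}; the case $r=\infty$ is handled identically using the second inequality of Theorem \ref{thm:vec-maxi}. One should also note that the hypothesis $J\ge 1$ together with $l\ge 1$ means the support overlap involves only the annular pieces $\varphi_j$, $j\ge 1$, so the low-frequency piece $\varphi_0$ does not interfere; if $J=0$ the term $\varphi_0(D)g_0$ is treated by the same bump-function argument.

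The main obstacle is not deep but requires care: making the almost-orthogonality bookkeeping precise, i.e.\ identifying the exact constant $C$ for which $\operatorname{supp}\varphi_l\cap\operatorname{supp}\varphi_j=\emptyset$ when $|l-j|>C$, and checking that the pointwise maximal-function domination $|\psi_l(D)h|\lesssim Mh$ holds uniformly in $l$ (and uniformly in $k$ over the finite range). Both are standard in Littlewood–Paley theory, but the uniformity in the scale parameter is what makes the vector-valued maximal inequality applicable; once that is in hand the rest is the routine triangle-inequality assembly described above.
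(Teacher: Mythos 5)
Your proposal is correct and follows essentially the same route as the paper: almost-orthogonality of the Littlewood--Paley pieces ($\varphi_l\varphi_j=0$ for $|l-j|\ge 2$) to reduce to a finite sum of shifts, pointwise domination by the Hardy--Littlewood maximal function, and then the vector-valued maximal inequality \eqref{eq:150928-71} on $\cM^p_q$. The only cosmetic difference is that you compose the two multipliers into a single bump $\psi_l=\varphi_l\varphi_{l+k}$ and apply Theorem \ref{thm:vec-maxi} once per shift, whereas the paper keeps $\varphi_l(D)$ and $\varphi_j(D)$ separate and applies the maximal inequality twice.
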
 
	\begin{proof}
		Note that, for $f\in L^1_{\rm loc}(\R^n)$, we have
		\begin{align}\label{eq:161018-1}
		|\varphi_l(D)f|\lesssim Mf.
		\end{align}
		We use \eqref{eq:161018-1} and the fact that $\varphi_l \varphi_j=0$ whenever $|l-j|\ge 2$
		to obtain
		\begin{align}\label{eq:170323-1}
		\sum_{l=1}^{\infty}
		\left|
		\varphi_l(D)
		\left(
		\sum_{j=J}^{\infty}
		\varphi_j(D)g_j
		\right)
		\right|^r
		&\le 
		\sum_{l=J-1}^{\infty}
		\left| 
		\sum_{j=\max(l-1,J)}^{l+1}
		\varphi_l(D) \varphi_j(D) g_j
		\right|^r
		\nonumber
		\\
		&\lesssim
		\sum_{l=J-1}^{\infty}
		\sum_{j=\max(l-1,J)}^{l+1}
		|\varphi_l(D)[\varphi_j(D)g_j]|^r
		\nonumber
		\\
		&\lesssim
		\sum_{j=J}^{\infty}
		M(\varphi_j(D)g_j)^r.
		\end{align}
		By combining \eqref{eq:161018-1}, \eqref{eq:170323-1}, and Theorem \ref{thm:vec-maxi} ,  we have
		\begin{align*}
		&\left\|
		\left(
		\sum_{l=1}^{\infty}
		\left|
		\varphi_l(D)
		\left(
		\sum_{j=J}^{\infty}
		\varphi_j(D)g_j
		\right)
		\right|^r
		\right)^{\frac{1}{r}}
		\right\|_{\cM^p_q}
		\lesssim
		\left\|
		\left(
		\sum_{j=J}^{\infty}
		M(\varphi_j(D)g_j)^r
		\right)^{\frac{1}{r}}
		\right\|_{\cM^p_q}
                \\
		&\lesssim
		\left\|
		\left(
		\sum_{j=J}^{\infty}
		|\varphi_j(D)g_j|^r
		\right)^{\frac{1}{r}}
		\right\|_{\cM^p_q}
		\lesssim
		\left\|
		\left(
		\sum_{j=J}^{\infty}
		|Mg_j|^r
		\right)^{\frac{1}{r}}
		\right\|_{\cM^p_q}
		\lesssim
		\left\|
		\left(
		\sum_{j=J}^{\infty}
		|g_j|^r
		\right)^{\frac{1}{r}}
		\right\|_{\cM^p_q}.
		\end{align*}
	\end{proof}

\subsection{Some inequalities}

We use the following inequality
which improves slightly the one in \cite{Triebel83}.
\begin{lemma}\label{lem:151109-1}{\rm \cite[Lemma 2.17]{NoiSawano2012}}
Fix $J\in \Z \cup\{-\infty\}$.
Let $\{a_j\}_{j=J}^\infty$ be a non-negative sequence and 
$\kappa>0$.
Then
\[
\sum_{j=J}^\infty a_j\left(\sum_{k=J}^j a_k\right)^{\kappa-1}
\le
\frac{1}{\min(\kappa,1)}
\left(\sum_{j=J}^\infty a_j\right)^\kappa.
\]
Here, we assume there is a non-zero $a_j$.
\end{lemma}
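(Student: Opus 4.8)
The plan is to pass to the partial sums $S_j := \sum_{k=J}^{j} a_k$ (with the convention $S_{J-1} := 0$), so that $a_j = S_j - S_{j-1}$ and the claimed bound reads $\sum_{j=J}^\infty a_j S_j^{\kappa-1} \le \tfrac{1}{\min(\kappa,1)}\,S_\infty^{\kappa}$, where $S_\infty := \sum_{k=J}^\infty a_k$. If $S_\infty = \infty$ the inequality is trivial, so I would assume $S_\infty < \infty$; discarding the leading zero terms (with the convention $0\cdot 0^{\kappa-1}=0$) I may also assume $a_J>0$, hence $0 < S_j \le S_\infty$ for all $j$. This is the discrete counterpart of the identity $\int_0^X a(t)\big(\int_0^t a\big)^{\kappa-1}dt = \tfrac1\kappa\big(\int_0^X a\big)^{\kappa}$, and I would treat the two regimes of $\kappa$ separately according to the monotonicity of $t\mapsto t^{\kappa-1}$.

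For $\kappa \ge 1$ one has $\min(\kappa,1)=1$ and $t\mapsto t^{\kappa-1}$ non-decreasing, so $S_j^{\kappa-1}\le S_\infty^{\kappa-1}$; multiplying by $a_j$ and summing gives $\sum_{j=J}^\infty a_j S_j^{\kappa-1} \le S_\infty^{\kappa-1}\sum_{j=J}^\infty a_j = S_\infty^{\kappa}$, as required. For $0<\kappa<1$ one has $\min(\kappa,1)=\kappa$ and $t\mapsto t^{\kappa-1}$ decreasing, so $S_j^{\kappa-1}\le t^{\kappa-1}$ on $[S_{j-1},S_j]$; integrating over that interval,
\[
a_j S_j^{\kappa-1} = S_j^{\kappa-1}(S_j - S_{j-1}) \le \int_{S_{j-1}}^{S_j} t^{\kappa-1}\,dt = \frac{1}{\kappa}\big(S_j^{\kappa}-S_{j-1}^{\kappa}\big),
\]
and summing over $j\ge J$ telescopes the right-hand side to $\tfrac1\kappa S_\infty^{\kappa}$ since $S_{J-1}=0$.

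I do not expect a genuine obstacle here: the two comparisons above are elementary and reproduce exactly the constant $1/\min(\kappa,1)$. The only care needed is at the first index $j$ with $a_j>0$, where $S_{j-1}=0$ makes $t^{\kappa-1}$ singular when $\kappa<1$; this is harmless because $\int_0^{S_j}t^{\kappa-1}\,dt = S_j^{\kappa}/\kappa<\infty$ precisely when $\kappa>0$. (The hypothesis that some $a_j$ is nonzero is present only so that the normalization $\big(\sum a_j\big)^{\kappa}$ on the right is not $0$; and one checks by taking a single nonzero term, resp. a block of $1$'s of growing length, that the constant cannot be lowered for $\kappa\ge1$, resp. $\kappa<1$.)
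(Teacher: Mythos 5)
Your proof is correct. Note that the paper itself offers no argument for this lemma: it is imported verbatim from the cited reference \cite[Lemma 2.17]{NoiSawano2012}, so there is no in-paper proof to compare against. Your two-case treatment is the standard one for this discrete Hardy--Littlewood--P\'olya type inequality: for $\kappa\ge 1$ the crude bound $S_j^{\kappa-1}\le S_\infty^{\kappa-1}$ suffices, and for $0<\kappa<1$ the comparison $a_jS_j^{\kappa-1}\le\int_{S_{j-1}}^{S_j}t^{\kappa-1}\,dt$ followed by telescoping yields exactly the constant $1/\kappa$. The edge cases are handled properly (the convention $0\cdot 0^{\kappa-1}=0$, the trivial case $S_\infty=\infty$), and your sharpness examples are valid. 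The only point you leave implicit is the case $J=-\infty$: there one cannot literally "discard leading zeros and assume $a_J>0$", but the argument survives unchanged because the terms with $S_j=0$ vanish under the stated convention, and the telescoping over $M\le j\le N$ gives $\kappa^{-1}(S_N^{\kappa}-S_{M-1}^{\kappa})\le\kappa^{-1}S_\infty^{\kappa}$ uniformly, so letting $M\to-\infty$ causes no difficulty. This is a cosmetic remark, not a gap.
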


When we consider the complex interpolation
of the second kind of classical Morrey spaces, 
we are faced with the function
$|\log|f||^{-1}$ in the proof;
see \cite{HS}.
To take an advantage of this $\lq\lq\log"$ factor fully,
we will use the following series of lemmas:

\begin{lemma}\label{lem:161109-1}
Let $1\le r<\infty$ and $z\in \C$ be such that ${\rm Re}(z)\ge 0$.
Then there exists a constant $C=C_z>0$ such that 
\begin{align}\label{eq:160111-1}
\left|
\frac{s^{z}-1}{\log(s^r)}
\right|
\le
C\left(\log\left(s+\frac{1}{s}\right)\right)^{-1}
\end{align}
for every $s\in (0,1)$ and 
\begin{align}\label{eq:160111-2}
\left|
\frac{s^{-z}-1}{\log(s^r)}
\right|
\le
C\left(\log\left(s+\frac{1}{s}\right)\right)^{-1}
\end{align}
for every $s>1$.
\end{lemma}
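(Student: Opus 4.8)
The plan is to deduce the second estimate \eqref{eq:160111-2} from the first one \eqref{eq:160111-1} by the substitution $s\mapsto 1/s$, and then to prove \eqref{eq:160111-1} by showing that the single function
\[
h(s):=\frac{(s^{z}-1)\,\log\!\left(s+\tfrac1s\right)}{\log(s^{r})},\qquad s\in(0,1),
\]
is bounded on $(0,1)$. Indeed, for $s\in(0,1)$ one has $s+\tfrac1s>2$, so $\log(s+\tfrac1s)>\log2>0$, and therefore the bound $\sup_{0<s<1}|h(s)|\le C$ is literally \eqref{eq:160111-1} with the same constant $C=C_{z}$. For the reduction: given $s>1$, put $t:=1/s\in(0,1)$; then $s^{-z}=t^{z}$, $\log(s^{r})=-\log(t^{r})$ and $s+\tfrac1s=t+\tfrac1t$, so the left-hand side of \eqref{eq:160111-2} equals $\bigl|\frac{t^{z}-1}{\log(t^{r})}\bigr|$ and its right-hand side equals $C\bigl(\log(t+\tfrac1t)\bigr)^{-1}$; hence \eqref{eq:160111-2} follows at once from \eqref{eq:160111-1}.

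To show that $h$ is bounded I would first note that $h$ is continuous on $(0,1)$, since each factor is and $\log(s^{r})=r\log s$ never vanishes there, and then control the two endpoints separately. Near $s=1$: from $s^{z}-1=e^{z\log s}-1\sim z\log s$ and $\log(s^{r})=r\log s$ one gets $h(s)\to\frac{z}{r}\log 2$ as $s\to1^{-}$, so $h$ is bounded on $[\tfrac12,1)$. Near $s=0$: writing $\log(s+\tfrac1s)=\log(1+s^{2})-\log s$ gives $\frac{\log(s+1/s)}{\log(s^{r})}=-\frac1r+\frac{\log(1+s^{2})}{r\log s}\to-\frac1r$ as $s\to0^{+}$, while $|s^{z}-1|\le s^{{\rm Re}(z)}+1\le2$ because ${\rm Re}(z)\ge0$ and $0<s<1$; hence $|h(s)|\le 2\sup_{0<s\le1/2}\bigl|\frac{\log(s+1/s)}{r\log s}\bigr|<\infty$ on $(0,\tfrac12]$. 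Combining the two ranges yields $\sup_{0<s<1}|h(s)|<\infty$, which is the claim.

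All the estimates involved are elementary, so I expect the only delicate point to be the regime $s\to1$. There the "$\log$" factor $\log(s+\tfrac1s)$ on the right of \eqref{eq:160111-1} does not decay (it tends to $\log2$), so it gives no help in absorbing the vanishing denominator $\log(s^{r})$, and a crude bound such as $|s^{z}-1|\le2$ would be useless; one must instead exploit that $s^{z}-1$ vanishes to exactly first order in $\log s$, matching $\log(s^{r})$, which is precisely what makes $h$ extend continuously across $s=1$. A secondary point worth noting is that when ${\rm Re}(z)=0$ the factor $s^{z}$ merely oscillates on the unit circle as $s\to0^{+}$, so near $0$ one relies on the uniform bound $|s^{z}-1|\le2$ rather than on a limit. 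Everything else is routine bookkeeping.
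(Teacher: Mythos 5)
Your proof is correct. The paper states Lemma \ref{lem:161109-1} without any proof, treating it as an elementary fact, so there is nothing to compare against; your argument --- reducing \eqref{eq:160111-2} to \eqref{eq:160111-1} via $s\mapsto 1/s$ and then bounding the quotient $h$ by continuity on compact subintervals together with the finite limits at $s\to 1^-$ (where $s^z-1\sim z\log s$ cancels the denominator) and at $s\to 0^+$ (where $|s^z-1|\le 2$ and $\log(s+1/s)/\log(s^r)\to -1/r$) --- is the natural way to supply the omitted details, and every step checks out.
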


\begin{lemma}\label{lem:161101-1}
Let $1\le r<\infty$ and fix $t\in \R$. Then there exists a constant $C_t>0$ such that
\begin{align}\label{eq:160108-1}
\left|
\frac{s^{it}-1}{\log(s^r)}
\right|
\le 
C_t\left(\log\left(s+\frac1s\right)\right)^{-1},
\end{align}
for all $s>0$ with $s\neq 1$.
\end{lemma}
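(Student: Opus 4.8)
\textbf{Proof proposal for Lemma \ref{lem:161101-1}.}

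The plan is to reduce the estimate to elementary properties of the functions $s\mapsto s^{it}-1$ and $s\mapsto \log(s^r)=r\log s$, and to exploit the precise comparison between $|\log s|$ and $\log(s+1/s)$. First I would observe that it suffices to treat the case $r=1$: indeed $\log(s^r)=r\log(s)$, so the left-hand side of \eqref{eq:160108-1} equals $r^{-1}|(s^{it}-1)/\log s|$, and a change of the constant $C_t$ absorbs the factor. Next, I would dispose of the behaviour near $s=1$. Writing $s=e^{u}$ with $u\in\R\setminus\{0\}$, we have $s^{it}-1=e^{itu}-1$ and $\log s=u$, so $(s^{it}-1)/\log s=(e^{itu}-1)/u$; this function of $u$ is bounded on all of $\R$ by $\min(|t|,2/|u|)\le |t|$, and in particular it extends continuously to $u=0$ with value $it$. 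Meanwhile $\log(s+1/s)=\log(e^{u}+e^{-u})=\log(2\cosh u)$, which behaves like $\tfrac12 u^{2}+\log 2$ near $u=0$ but is bounded below by $\log 2>0$ everywhere. Hence on any region where $s$ is bounded away from $0$ and $\infty$ — equivalently $|u|\le M$ — both sides are comparable to constants and the inequality holds with a constant depending only on $t$ (and $M$).

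It then remains to handle $s\to 0^{+}$ and $s\to\infty$, i.e. $|u|\to\infty$. There $|e^{itu}-1|\le 2$ while $|\log s|=|u|\to\infty$, so the left-hand side is $O(|u|^{-1})$. On the other hand $\log(s+1/s)=\log(2\cosh u)=|u|+\log(1+e^{-2|u|})+\log\!\big(1\big)$, which is $|u|+O(1)$, hence $\big(\log(s+1/s)\big)^{-1}$ is comparable to $|u|^{-1}$ for $|u|$ large. Therefore the ratio of the two sides stays bounded as $|u|\to\infty$, and combining this with the estimate near $u=0$ gives \eqref{eq:160108-1} for a single constant $C_t$ valid for all $s>0$, $s\neq1$. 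A clean way to package all of this is to note that for every $s>0$ with $s\neq1$,
\[
\left|\frac{s^{it}-1}{\log(s^r)}\right|
\le \frac{2}{r\,|\log s|}
\qquad\text{and}\qquad
\log\!\left(s+\frac1s\right)\le |\log s|+\log 2,
\]
so that $\big(\log(s+1/s)\big)^{-1}\ge \big(|\log s|+\log2\big)^{-1}$; then one checks that $\dfrac{|\log s|+\log 2}{|\log s|}$ is bounded on $0<|\log s|\le 1$ by continuity and the boundedness of $(e^{itu}-1)/u$, and is trivially bounded by $1+\log 2$ for $|\log s|\ge 1$.

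I do not expect a serious obstacle here; the only mild subtlety is the transition at $s=1$, where naively dividing by $\log s$ looks singular but in fact $(s^{it}-1)/\log s$ is bounded (and has a removable singularity) — this is exactly why the statement is true with a constant independent of how close $s$ is to $1$. The role of $\log(s+1/s)$ rather than $|\log s|$ is purely cosmetic for this lemma (they differ by a bounded factor away from $s=1$ and $\log(s+1/s)$ is the genuinely smaller, hence safer, quantity near $s=1$); it is chosen so that the conclusion dovetails with Lemma \ref{lem:161109-1} and with the later applications to the complex interpolation argument, where the uniform lower bound $\log(s+1/s)\ge\log2$ is convenient.
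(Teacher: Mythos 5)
The paper states this lemma without proof (it is treated as elementary, in the spirit of \cite[Lemma 3]{HS}), so there is no argument of the authors to compare against; your proof is correct and complete. The reduction to $r=1$ via $\log(s^r)=r\log s$ with $r\ge 1$, the substitution $s=e^u$ giving $|(e^{itu}-1)/u|\le\min(|t|,2/|u|)$, and the two-sided comparison $\log 2\le\log(s+1/s)\le|\log s|+\log 2$ together yield the inequality with $C_t\lesssim\max(|t|,1)$, and the case split $|\log s|\le 1$ versus $|\log s|\ge 1$ closes the argument. The only blemish is in your final ``clean packaging'' sentence: the ratio $\bigl(|\log s|+\log 2\bigr)/|\log s|$ is of course unbounded as $s\to 1$, so it is not that quantity which ``one checks is bounded on $0<|\log s|\le 1$''; what is bounded there is the product
\[
\left|\frac{s^{it}-1}{\log s}\right|\cdot\bigl(|\log s|+\log 2\bigr)\le |t|\,(1+\log 2),
\]
using the bound by $|t|$ rather than by $2/|\log s|$ near $s=1$ --- which is exactly what your first two paragraphs establish correctly. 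Rephrase that sentence and the proof stands as written.
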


As we have mentioned,
the function of the form
$|\log|f||^{-1}$ plays a crucial role
for later considerations.
We will need some variant including
the logarithm.
We use the functions defined by
\begin{equation}\label{eq:160522-1}
\Phi_{\kappa}(t):= 
t^{\kappa-1}\left(\log\left(t+\frac1t \right)\right)^{-1}, \quad
\Psi_{\kappa}(t):= 
\int_0^t \Phi_{\kappa}(\sqrt[r]{s})^r\,ds \quad (t,\kappa>0, \  1\le r<\infty ).
\end{equation}

\begin{lemma}\label{lem:151112-1}
Let $1\le r<\infty$ and $\kappa>0$.
Then we have
\begin{equation}\label{eq:151111-3}
\sum_{j=0}^\infty \left[ a_j
\Phi_{\kappa}\left(\sqrt[r]{\sum_{k=0}^{j}a_k{}^r}
\right)\right]^r
\lesssim
\Psi_{\kappa}\left(\sum_{j=0}^{\infty}a_j{}^r
\right)
\end{equation}
for all nonnegative square summable sequences
$\{a_j\}_{j=0}^\infty$.
\end{lemma}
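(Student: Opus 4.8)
The plan is to recast $(\ref{eq:151111-3})$ as a series-versus-integral comparison and then to exploit that the integrand $\Phi_\kappa(\sqrt[r]{\cdot})^r$ is a \emph{doubling} function, the logarithmic factor being harmless precisely because it varies slowly. Write $b_j:=a_j{}^r$, form the partial sums $S_j:=\sum_{k=0}^j b_k$ with $S_{-1}:=0$, and put $S:=\sum_{k=0}^\infty b_k$ and $\psi(t):=\Phi_\kappa(\sqrt[r]{t})^r=t^{\kappa-1}\bigl(\log(t^{1/r}+t^{-1/r})\bigr)^{-r}$. Then the left-hand side of $(\ref{eq:151111-3})$ equals $\sum_{j=0}^\infty b_j\,\psi(S_j)$, and, since $0=S_{-1}\le S_0\le S_1\le\cdots$ increases to $S$, the right-hand side equals $\Psi_\kappa(S)=\int_0^S\psi(s)\,ds=\sum_{j=0}^\infty\int_{S_{j-1}}^{S_j}\psi(s)\,ds$ (if $S=\infty$ the statement is either trivial or read via $\lim_{t\to\infty}\Psi_\kappa(t)$, and $\Psi_\kappa(t)<\infty$ for finite $t$ because $\kappa>0$). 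So it suffices to bound $\sum_j b_j\psi(S_j)$ by a constant, depending only on $\kappa$ and $r$, times $\sum_j\int_{S_{j-1}}^{S_j}\psi$.

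The one computation I would carry out carefully is the doubling estimate. From $\tfrac1r|\log t|\le\log(t^{1/r}+t^{-1/r})\le\log2+\tfrac1r|\log t|$ one sees that $t\mapsto\log(t^{1/r}+t^{-1/r})$ changes only by a bounded factor when $t$ is replaced by any point of $[t/2,t]$, uniformly in $t>0$ (in particular near $t=1$, where this factor attains its minimum $\log2$); hence $\psi(t)\asymp\psi(s)$ whenever $1\le t/s\le 2$, with implied constants depending only on $\kappa$ and $r$. This is the place where it is not the ``$\log$'' itself but its slow variation that makes the argument run, and it is the natural analogue of the log-free estimate in Lemma \ref{lem:151109-1}.

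With doubling available, I would split the indices into the \emph{fast} ones, where $b_j\ge S_{j-1}$ (so $S_j\ge 2S_{j-1}$ and $b_j\le S_j$), and the \emph{slow} ones, where $S_{j-1}\le S_j<2S_{j-1}$. For a slow index $j$ and $s\in[S_{j-1},S_j]$ we have $1\le S_j/s<2$, so $\psi(S_j)\lesssim\psi(s)$ and therefore $b_j\psi(S_j)=(S_j-S_{j-1})\psi(S_j)\lesssim\int_{S_{j-1}}^{S_j}\psi(s)\,ds$. For a fast index $j$ and $s\in[S_j/2,S_j]$ we again have $1\le S_j/s\le 2$, whence $b_j\psi(S_j)\le S_j\psi(S_j)\lesssim\int_{S_j/2}^{S_j}\psi(s)\,ds$. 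It remains to note that, within each class, the attached intervals are essentially pairwise disjoint subsets of $(0,S]$: for two slow indices $j<j'$ one has $S_{j'-1}\ge S_j$; for a fast index $j$ the interval $[S_j/2,S_j]$ is contained in $[S_{j-1},S_j]\subseteq(0,S]$; and for two fast indices $j<j'$ one has $S_{j'}/2\ge S_{j'-1}\ge S_j$. Summing the slow terms and the fast terms separately then gives $\sum_{j\ \mathrm{slow}}b_j\psi(S_j)\lesssim\Psi_\kappa(S)$ and $\sum_{j\ \mathrm{fast}}b_j\psi(S_j)\lesssim\Psi_\kappa(S)$, and adding the two bounds yields $(\ref{eq:151111-3})$.

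The main obstacle is conceptual rather than technical: $\psi$ first increases and then, depending on $\kappa$ and $r$, may decrease, so the naive comparison $b_j\psi(S_j)\le\int_{S_{j-1}}^{S_j}\psi$ — valid for nonincreasing integrands — is unavailable, and one has to see that the doubling property together with the fast/slow dichotomy is exactly the right replacement. Once the doubling estimate for $\psi$ is pinned down uniformly in $t>0$, the remainder is routine bookkeeping with disjoint intervals.
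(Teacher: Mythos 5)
Your proof is correct, and it takes a genuinely different route from the paper's. The paper argues by a case distinction on $\kappa$: for $\kappa\in(0,1)$ it uses that $\Phi_\kappa$ is essentially decreasing, so that each term $a_j{}^r\,\Phi_\kappa\bigl(\sqrt[r]{\textstyle\sum_{k\le j}a_k{}^r}\bigr)^r$ is dominated by the integral of $\Phi_\kappa(\sqrt[r]{s})^r$ over the consecutive interval between successive partial sums; for $\kappa>1$ it uses that $\Phi_\kappa$ is essentially increasing, collapses the whole left-hand side to the single quantity $R\,\Phi_\kappa(\sqrt[r]{R})^r$ with $R=\sum_j a_j{}^r$, and compares that with $\int_{R/4}^{R}\Phi_\kappa(\sqrt[r]{s})^r\,ds$ via the doubling inequality \eqref{eq:160107-1}. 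You avoid the case distinction altogether: the only property of $\psi(t)=\Phi_\kappa(\sqrt[r]{t})^r$ you use is its comparability at $s$ and $t$ when $1\le t/s\le2$, and the fast/slow dichotomy with pairwise essentially disjoint intervals does the rest. What this buys is uniformity in $\kappa>0$; in particular your argument covers $\kappa=1$, which falls between the paper's two cases ($\Phi_1$ is neither essentially increasing nor essentially decreasing on $(0,\infty)$, since $\Phi_1(t)\to0$ as $t\to0^+$ and as $t\to\infty$ while $\Phi_1(1)=1/\log2$), and its constants do not degenerate as $\kappa\to1$. The paper's proof is shorter within each of its two cases. In the application only $\kappa=p/p_0<1<p/p_1$ occur, so the difference does not affect the main theorems; both arguments give constants depending only on $\kappa$ and $r$, which is all that is used later.
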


\begin{proof}
Assume first that $\kappa\in (0,1)$.
In this case,
\begin{equation}\label{eq:160523-51}
\Phi_\kappa(t_1)\gtrsim \Phi_\kappa(t_2)
\end{equation} for every $t_1\le t_2$.
We observe
\begin{align*}
\Psi_{\kappa}
\left( \sum_{j=0}^\infty a_j{}^r \right)
&=
\int_0^{\sum_{j=0}^\infty a_j{}^r }
\Phi_{\kappa}(\sqrt[r]{s})^r \ ds\\
&=
\int_0^{a_0{}^r}
\Phi_{\kappa}(\sqrt[r]{s})^r \ ds+
\sum_{j=0}^\infty
\int_{\sum_{k=0}^j a_k{}^r }^{\sum_{k=0}^{j+1} a_k{}^r }
\Phi_{\kappa}(\sqrt[r]{s})^r \ ds.
\end{align*}
Using (\ref{eq:160523-51}), we have
\begin{align*}
\Psi_{\kappa}
\left( \sum_{j=0}^\infty a_j{}^r \right)
&\gtrsim
a_0{}^r \Phi_{\kappa}\left(\sqrt[r]{a_0{}^r}\right)^r
+
\sum_{j=0}^\infty
a_{j+1}{}^r \Phi_{\kappa}
\left(\sqrt[r]{\sum_{k=0}^{j+1} a_k{}^r}\right)^r\\
&=
\sum_{j=0}^\infty
a_{j}{}^r \Phi_{\kappa}
\left(\sqrt[r]{\sum_{k=0}^{j} a_k{}^r}\right)^r.
\end{align*}
For the case $\kappa>1$, 
observe that $\Phi_\kappa(t)$  satisfies 
\begin{align}\label{eq:160107-1}
\Phi_{\kappa}(2t) \le 2^{\kappa} \Phi_{\kappa}(t)
\end{align}
for all $t>0$.
In addition, we also can choose 
$C_2>0$ such that 
\begin{align}\label{eq:160107-2}
\Phi_{\kappa}(t_1) \le C_2 \Phi_{\kappa}(t_2)
\end{align}
for every $t_1\le t_2$.
Write $R:= \sum_{j=0}^\infty a_j{}^r$.
By combining \eqref{eq:160107-1}
and \eqref{eq:160107-2}, we get
\begin{align*}
\sum_{j=0}^\infty
\left[a_j 
\Phi_{\kappa}
\left(\sqrt[r]{\sum_{k=0}^j
a_k{}^r}\right)\right]^r
&\lesssim R\Phi_{\kappa}(\sqrt[r]{R})^r\\
&\lesssim
2^{2\kappa}
R\Phi_{\kappa}\left(\frac12
\sqrt[r]{R}\right)^r\\
&\lesssim
\int_{R/4}^R
\Phi_{\kappa}(\sqrt[r]{s})^r \ ds\\
&\le \Psi_\kappa\left(\sum_{j=0}^{\infty}a_j{}^r\right)
\end{align*}
as desired.
\end{proof}

\begin{lemma}\label{lem:161109-2}
Let $1\le r<\infty$,  $\kappa>0$ and $a\in (0,1)$.
Then, we have 
\begin{align}\label{eq:160111-3}
\Psi_{\kappa}(t^r)\le \frac{1}{\kappa (\log 2)^{r}}
\left(a^{(r-1)\kappa}+
\left(\log\left(\sqrt[r]{a}+
\frac{1}{\sqrt[r]{a}}\right)\right)^{-r}\right)t^{r\kappa},
\end{align}
for every $t\in (0,a)\cup (a^{-1},\infty)$.
\end{lemma}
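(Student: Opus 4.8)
The plan is to estimate $\Psi_\kappa(t^r)$ directly from its integral definition \eqref{eq:160522-1}, namely
$\Psi_\kappa(t^r)=\int_0^{t^r}\Phi_\kappa(\sqrt[r]{s})^r\,ds$, by bounding the integrand $\Phi_\kappa(\sqrt[r]{s})^r$ on the relevant range of $s$. Recall $\Phi_\kappa(u)=u^{\kappa-1}(\log(u+1/u))^{-1}$, so $\Phi_\kappa(\sqrt[r]{s})^r = s^{(\kappa-1)}\bigl(\log(\sqrt[r]{s}+1/\sqrt[r]{s})\bigr)^{-r}$ after writing $u=\sqrt[r]{s}$ and noting $(u^{\kappa-1})^r = s^{(\kappa-1)r/r}\cdot$... more carefully, $\Phi_\kappa(\sqrt[r]{s})^r = (\sqrt[r]{s})^{(\kappa-1)r}\bigl(\log(\sqrt[r]{s}+1/\sqrt[r]{s})\bigr)^{-r} = s^{\kappa-1}\bigl(\log(\sqrt[r]{s}+1/\sqrt[r]{s})\bigr)^{-r}$. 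Wait — that gives $s^{\kappa-1}$ only if $(\kappa-1)r/r$... I should be careful: $(\sqrt[r]{s})^{(\kappa-1)r} = s^{\kappa-1}$. Yes. So $\Psi_\kappa(t^r)=\int_0^{t^r} s^{\kappa-1}\bigl(\log(\sqrt[r]{s}+1/\sqrt[r]{s})\bigr)^{-r}\,ds$, and the whole game is to control the logarithmic factor.

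The first step is to split into the two cases $t\in(0,a)$ and $t\in(a^{-1},\infty)$. In the case $t<a<1$, the variable $s$ ranges over $(0,t^r)\subseteq(0,a^r)\subseteq(0,1)$, where $\sqrt[r]{s}<1$ so that $\sqrt[r]{s}+1/\sqrt[r]{s}$ is decreasing in $s$; hence $\log(\sqrt[r]{s}+1/\sqrt[r]{s})\ge \log(\sqrt[r]{t^r}+1/\sqrt[r]{t^r})=\log(t+1/t)\ge \log(a+1/a)\ge\log(\sqrt[r]a+1/\sqrt[r]a)$ — one must pick the sharpest monotone bound that produces the term $\bigl(\log(\sqrt[r]a+1/\sqrt[r]a)\bigr)^{-r}$ appearing on the right-hand side; since $t<a<1$ and $s<1$, bounding $\log(\sqrt[r]{s}+1/\sqrt[r]{s})\ge\log(2)$ is too weak on its own, so instead use $\log(\sqrt[r]{s}+1/\sqrt[r]{s})\ge\log(\sqrt[r]{a}+1/\sqrt[r]{a})$ valid since $\sqrt[r]{s}<\sqrt[r]{a}<1$. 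Then $\Psi_\kappa(t^r)\le \bigl(\log(\sqrt[r]a+1/\sqrt[r]a)\bigr)^{-r}\int_0^{t^r}s^{\kappa-1}\,ds = \kappa^{-1}\bigl(\log(\sqrt[r]a+1/\sqrt[r]a)\bigr)^{-r}t^{r\kappa}$, which is dominated by the second term on the right of \eqref{eq:160111-3} (with room to spare, since $(\log 2)^{-r}\ge 1$ is not needed here, but the stated constant accommodates it).

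The second step is the case $t>a^{-1}>1$. Here $s$ ranges over $(0,t^r)$, which now includes values $s\ge 1$ where the integrand behaves differently, so I would split $\int_0^{t^r} = \int_0^1 + \int_1^{t^r}$. On $(0,1)$, use $\log(\sqrt[r]{s}+1/\sqrt[r]{s})\ge\log 2$, giving $\int_0^1 s^{\kappa-1}(\log 2)^{-r}\,ds = \kappa^{-1}(\log 2)^{-r}\le \kappa^{-1}(\log 2)^{-r}t^{r\kappa}$ (as $t>1$); this is absorbed into the $a^{(r-1)\kappa}t^{r\kappa}$ term once one checks $a^{(r-1)\kappa}$ is bounded below appropriately — actually since $a<1$ and the target term is $a^{(r-1)\kappa}t^{r\kappa}/(\kappa(\log2)^r)$, and $a^{(r-1)\kappa}\le 1$ when $r\ge 1$, this direction needs care: one wants $\Psi_\kappa(t^r)\le \text{RHS}$, and $a^{(r-1)\kappa}$ could be small, so the $\int_0^1$ piece should instead be compared against the second ($\log$) term, exactly as in Step 1 but with the cruder bound $\log 2$. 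On $(1,t^r)$, use that $\sqrt[r]{s}\ge 1$ so $\log(\sqrt[r]{s}+1/\sqrt[r]{s})\ge\log(\sqrt[r]{s})=\tfrac1r\log s$; this is unbounded and not directly useful, so instead bound $\log(\sqrt[r]{s}+1/\sqrt[r]{s})\ge\log 2$ again, yielding $\int_1^{t^r}s^{\kappa-1}(\log2)^{-r}\,ds\le \kappa^{-1}(\log 2)^{-r}t^{r\kappa}$. Combining, $\Psi_\kappa(t^r)\le 2\kappa^{-1}(\log 2)^{-r}t^{r\kappa}$ in this case, and one reconciles this with \eqref{eq:160111-3} by using $a^{(r-1)\kappa}+\bigl(\log(\sqrt[r]a+1/\sqrt[r]a)\bigr)^{-r}\ge a^{(r-1)\kappa}$; since $a<1$, $a^{(r-1)\kappa}$ is bounded below only if $r$ is close to $1$, so the honest route is to observe that for $t>a^{-1}$ one has $t^{r\kappa}>a^{-r\kappa}$, hence $a^{(r-1)\kappa}t^{r\kappa} = a^{-\kappa}\cdot a^{r\kappa}t^{r\kappa}> a^{-\kappa}> 1$, which supplies the needed factor.

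The main obstacle I anticipate is the bookkeeping in the case $t>a^{-1}$: one must match the crude constant $2\kappa^{-1}(\log 2)^{-r}$ against the $a$-dependent expression on the right of \eqref{eq:160111-3}, and this forces one to exploit the constraint $t>a^{-1}$ (equivalently $a^{-1}<t$) to manufacture the factor $a^{(r-1)\kappa}$ — i.e., the inequality is not a pointwise-in-$s$ estimate but genuinely uses the lower bound on $t$. Everything else is elementary monotonicity of $u\mapsto\log(u+1/u)$ on $(0,1)$ and on $(1,\infty)$ together with the trivial integral $\int_0^{t^r}s^{\kappa-1}\,ds=\kappa^{-1}t^{r\kappa}$.
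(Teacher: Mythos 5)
Your treatment of the case $t\in(0,a)$ is correct and coincides with the paper's: on $(0,t^r)\subset(0,1)$ the map $u\mapsto u+1/u$ is decreasing, so $\log(\sqrt[r]{s}+1/\sqrt[r]{s})\ge\log(\sqrt[r]{a}+1/\sqrt[r]{a})$ and the trivial integral gives $\kappa^{-1}\bigl(\log(\sqrt[r]{a}+1/\sqrt[r]{a})\bigr)^{-r}t^{r\kappa}$. The gap is in the case $t>a^{-1}$. There you split $\int_0^{t^r}=\int_0^1+\int_1^{t^r}$ and bound the logarithm below by $\log 2$ on all of $(1,t^r)$, arriving at $\Psi_\kappa(t^r)\le 2\kappa^{-1}(\log 2)^{-r}t^{r\kappa}$. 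This cannot be reconciled with \eqref{eq:160111-3}: that would require $2\le a^{(r-1)\kappa}+\bigl(\log(\sqrt[r]{a}+1/\sqrt[r]{a})\bigr)^{-r}$, and for $r>1$ both summands tend to $0$ as $a\to 0^+$ (e.g.\ $r=2$, $\kappa=1$, $a=e^{-20}$ makes the sum about $0.01$). Your observation that $a^{(r-1)\kappa}t^{r\kappa}>a^{-\kappa}>1$ only rescues the $\int_0^1$ piece, whose value is at most $\kappa^{-1}(\log 2)^{-r}\cdot 1$; it does nothing for the $\int_1^{t^r}$ piece, which already carries the full factor $t^{r\kappa}$ with an $a$-independent constant. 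The whole content of the lemma is that the constant decays as $a\to 0^+$, and the crude bound $\log(\sqrt[r]{s}+1/\sqrt[r]{s})\ge\log 2$ over the entire range $s\in(1,t^r)$ destroys exactly that decay.

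The repair (and the paper's argument) is to split at $s=t$ rather than at $s=1$. On $(0,t)$ the crude $\log 2$ bound is harmless because $\int_0^t s^{\kappa-1}\,ds=t^{\kappa}/\kappa$, and $t^{\kappa}=t^{r\kappa}\,t^{-(r-1)\kappa}\le a^{(r-1)\kappa}t^{r\kappa}$ by $t>a^{-1}$; this produces the first summand. On $(t,t^r)$ one has $\sqrt[r]{s}\ge\sqrt[r]{t}>1/\sqrt[r]{a}>1$, and since $u\mapsto u+1/u$ is increasing on $(1,\infty)$ this yields the $a$-dependent bound $\log(\sqrt[r]{s}+1/\sqrt[r]{s})\ge\log(\sqrt[r]{a}+1/\sqrt[r]{a})$, producing the second summand. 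The monotonicity facts you already use in Step 1 are exactly what is needed; only the location of the splitting point in Step 2 has to change.
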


\begin{proof}
By the fundamental theorem on calculus,
we have
\[
\Psi_{\kappa}(t^r)
=
\int_{0}^{t} 
s^{\kappa-1}\left(\log\left(\sqrt[r]{s}+\frac{1}{\sqrt[r]{s}}\right)\right)^{-r} \ ds
+
\int_{t}^{t^r} 
s^{\kappa-1}\left(\log\left(\sqrt[r]{s}+\frac{1}{\sqrt[r]{s}}\right)\right)^{-r} \ ds.
\]
For $t>a^{-1}$, we have
\begin{align*}
\Psi_{\kappa}(t^r)
&\le
\frac{1}{\left(\log 2 \right)^{r}} 
\int_{0}^{t} 
s^{\kappa-1} \ ds
+
\left(\log\left(\sqrt[r]{t}+\frac{1}{\sqrt[r]{t}}\right)\right)^{-r} 
\int_{t}^{t^r} 
s^{\kappa-1} \ ds\\
&\le
\frac{1}{\kappa (\log 2)^{r}} t^{\kappa}
+
\frac{1}{\kappa} 
\left(\log\left(\sqrt[r]{a}+\frac{1}{\sqrt[r]{a}}\right)\right)^{-r}t^{r\kappa}\\
&= \frac{1}{\kappa (\log 2)^{r}}
\left( \frac{1}{t^{(r-1)\kappa}}+
\left(\log 2 \cdot
\left(\log\left(\sqrt[r]{a}+\frac{1}{\sqrt[r]{a}}\right)\right)^{-1}
\right)^{r}\right)t^{r\kappa}\\
&\le \frac{1}{\kappa (\log 2)^{r}}
\left( a^{(r-1)\kappa}+
\left(\log\left(\sqrt[r]{a}+\frac{1}{\sqrt[r]{a}}\right)\right)^{-r}\right)t^{r\kappa}.
\end{align*}
Meanwhile, using
\[
\Psi_{\kappa}(t^r)
=\int_{0}^{t^r}
\Phi_{\kappa}(\sqrt[r]{s})^{r} \ ds
=\int_{0}^{t^r}
s^{\kappa-1}\left(\log\left(\sqrt[r]{s}+\frac{1}{\sqrt[r]{s}}\right)\right)^{-r} \ ds,
\]
we have
for $0<t<a$, we have
\begin{align*}
\Psi_{\kappa}(t^r)
&\le 
\left(\log\left(\sqrt[r]{t}+\frac{1}{\sqrt[r]{t}}\right)\right)^{-r}
\int_{0}^{t^r}
s^{\kappa-1} \ ds\\
&\le \frac{1}{\kappa (\log 2)^{r}}
t^{r\kappa}\left(\log\left(\sqrt[r]{a}+\frac{1}{\sqrt[r]{a}}\right)\right)^{-r}\\
&\le \frac{1}{\kappa (\log 2)^{r}}
\left( a^{(r-1)\kappa}+
\left(\log\left(\sqrt[r]{a}+\frac{1}{\sqrt[r]{a}}\right)\right)^{-r}\right)t^{r\kappa},
\end{align*}
as desired.
\end{proof}
For checking the holomorphicity of the second complex interpolation functor, 
we invoke the following lemma:
\begin{lemma}{\rm \cite[Lemma 3]{HS}}
Let $h \in {\mathbb C}$ and
$\varepsilon>0$.
Assume that $\varepsilon>2|h|$.
Then, there exists $C_\varepsilon>0$ such that
\begin{align}\label{eq:150514-3}
\sup_{0<t\le 1}
t^\varepsilon
\left|\frac{\exp(h\log t)-1}{h\log t}-1\right|
\le
C_\varepsilon |h|
\end{align}
and
\begin{align}\label{eq:150514-4}
\sup_{t>1}
t^{-\varepsilon}
\left|\frac{\exp(h \log t)-1}{h \log t}-1\right|
\le
C_\varepsilon |h|.
\end{align}
\end{lemma}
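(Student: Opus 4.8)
The plan is to reduce everything to an estimate for the entire function $g(u):=\dfrac{e^{u}-1}{u}-1$ near $u=0$ and then to substitute $u=h\log t$. First I would record the Taylor expansion $g(u)=\sum_{k=1}^{\infty}\frac{u^{k}}{(k+1)!}$, which gives the crude but amply sufficient bound
\[
|g(u)|\le |u|\sum_{k=1}^{\infty}\frac{|u|^{k-1}}{(k+1)!}\le |u|\sum_{j=0}^{\infty}\frac{|u|^{j}}{j!}=|u|\,e^{|u|}
\]
for every $u\in\C$ (with the usual convention $g(0)=0$).

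Next I would insert $u=h\log t$. For $0<t\le1$ one has $|u|=|h|\,|\log t|=-|h|\log t$, hence $e^{|u|}=t^{-|h|}$, and therefore
\[
t^{\varepsilon}\,|g(h\log t)|\le t^{\varepsilon}\cdot |h|\,|\log t|\cdot t^{-|h|}=|h|\,\bigl(|\log t|\,t^{\varepsilon-|h|}\bigr).
\]
Now the hypothesis $\varepsilon>2|h|$ gives $\delta:=\varepsilon-|h|>\varepsilon/2>0$, and an elementary one‑variable computation (the maximum of $-\log t\cdot t^{\delta}$ on $(0,1]$ is attained at $t=e^{-1/\delta}$) yields $\sup_{0<t\le1}|\log t|\,t^{\delta}=\frac{1}{\delta e}\le\frac{2}{\varepsilon e}$. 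This proves \eqref{eq:150514-3} with $C_{\varepsilon}=\frac{2}{\varepsilon e}$; the endpoint $t=1$ is trivial since $g(0)=0$. For \eqref{eq:150514-4} I would argue symmetrically: for $t>1$ we have $|u|=|h|\log t$ and $e^{|u|}=t^{|h|}$, so $t^{-\varepsilon}|g(h\log t)|\le |h|\,(\log t)\,t^{|h|-\varepsilon}=|h|\,(\log t)\,t^{-\delta}$, and $\sup_{t>1}(\log t)\,t^{-\delta}=\frac{1}{\delta e}\le\frac{2}{\varepsilon e}$, giving the same constant.

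There is essentially no obstacle here beyond bookkeeping; the one point that requires a little care is the \emph{uniformity of $C_{\varepsilon}$ in $h$}. One must resist bounding the exponent $\varepsilon-|h|$ from below by something like $0$ and instead keep it explicit, invoking $\varepsilon>2|h|$ to guarantee $\varepsilon-|h|\ge\varepsilon/2$ and hence a bound depending only on $\varepsilon$. This is precisely where the assumption $\varepsilon>2|h|$ is used, and it is the single delicate step of the argument.
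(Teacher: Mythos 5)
Your proof is correct: the power-series bound $|g(u)|\le |u|e^{|u|}$ for $g(u)=\frac{e^u-1}{u}-1$, the substitution $u=h\log t$ turning $e^{|u|}$ into $t^{\mp|h|}$, and the elementary maximization of $|\log t|\,t^{\delta}$ with $\delta=\varepsilon-|h|>\varepsilon/2$ all check out, and they yield the explicit constant $C_\varepsilon=\frac{2}{\varepsilon e}$ depending only on $\varepsilon$. The paper itself gives no proof of this lemma (it is quoted from \cite[Lemma 3]{HS}), and your argument is the standard one expected there, so there is nothing further to compare.
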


\begin{proposition}\label{cr-151002-1}
Suppose that
we have $13$ parameters
$$p_0,p_1,p,q_0,q_1,q,r,r_0,r_1,s,s_0,s_1,\theta$$
satisfying 
$(\ref{eq:151113-1})$ and $(\ref{eq:151113-2})$.
Then
$
\overset{\diamond}{{\mathcal E}}{}^{s_0}_{p_0q_0r_0}
\cap
\overset{\diamond}{{\mathcal E}}{}^{s_1}_{p_1q_1r_1}
\subset
\overset{\diamond}{{\mathcal E}}{}^{s}_{pqr}
$.
\end{proposition}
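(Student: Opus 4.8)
The plan is to show that membership of $f$ in both $\overset{\diamond}{{\mathcal E}}{}^{s_0}_{p_0q_0r_0}$ and $\overset{\diamond}{{\mathcal E}}{}^{s_1}_{p_1q_1r_1}$ forces the partial sums $S_N f := \sum_{j=0}^N \varphi_j(D)f$ to converge to $f$ in the norm ${\mathcal E}^s_{pqr}$, and then invoke Theorem \ref{th-150928-2} to conclude $f \in \overset{\diamond}{{\mathcal E}}{}^{s}_{pqr}$. So the first thing I would do is fix $f \in \overset{\diamond}{{\mathcal E}}{}^{s_0}_{p_0q_0r_0} \cap \overset{\diamond}{{\mathcal E}}{}^{s_1}_{p_1q_1r_1}$ and record, by Theorem \ref{th-150928-2} applied in each of the two spaces, that $S_N f \to f$ in ${\mathcal E}^{s_0}_{p_0q_0r_0}$ and in ${\mathcal E}^{s_1}_{p_1q_1r_1}$. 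In particular $f - S_N f \to 0$ in both norms.

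Next I would establish a "Morrey interpolation" inequality for the tail: for $g \in {\mathcal E}^{s_0}_{p_0q_0r_0} \cap {\mathcal E}^{s_1}_{p_1q_1r_1}$,
\[
\|g\|_{{\mathcal E}^s_{pqr}} \lesssim \|g\|_{{\mathcal E}^{s_0}_{p_0q_0r_0}}^{1-\theta}\,\|g\|_{{\mathcal E}^{s_1}_{p_1q_1r_1}}^{\theta}.
\]
Granting this, applying it to $g = f - S_N f$ and using the two convergences above immediately gives $\|f - S_N f\|_{{\mathcal E}^s_{pqr}} \to 0$, which is what we want. The interpolation inequality itself I would derive by hand using \eqref{eq:151113-3} (the crucial relation $p/q = p_0/q_0 = p_1/q_1$) together with Hölder's inequality applied inside the Morrey quasi-norm. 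Concretely, on a fixed ball $B$, one writes the local $L^q$ norm of the square function $S(g;r,s)$ in terms of the corresponding $L^{q_0}$ and $L^{q_1}$ quantities via Hölder with exponents $q_0/q$ and $q_1/q$ (using $\frac1q = \frac{1-\theta}{q_0}+\frac{\theta}{q_1}$ and the pointwise bound $S(g;r,s) \le S(g;r_0,s_0)^{1-\theta} S(g;r_1,s_1)^{\theta}$ coming from $\frac1r = \frac{1-\theta}{r_0}+\frac{\theta}{r_1}$, $s = (1-\theta)s_0+\theta s_1$ and Hölder on the summation in $j$); then the volume factor $|B|^{1/p-1/q}$ splits as $(|B|^{1/p_0-1/q_0})^{1-\theta}(|B|^{1/p_1-1/q_1})^{\theta}$ precisely because of \eqref{eq:151113-3}. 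Taking the supremum over balls and also handling the low-frequency term $\varphi_0(D)g$ by the same Hölder argument yields the claimed inequality.

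The main obstacle is the pointwise bound $S(g;r,s) \le S(g;r_0,s_0)^{1-\theta}\,S(g;r_1,s_1)^{\theta}$ when $r_0 \neq r_1$: one must check that Hölder's inequality on the $\ell^r$-sum in the variable $j$, with the weights $2^{js}$ distributed as $2^{js} = (2^{js_0})^{1-\theta}(2^{js_1})^{\theta}$, actually produces the factorization with the correct exponents — this is routine but needs $\frac{1}{r} = \frac{1-\theta}{r_0} + \frac{\theta}{r_1}$ used in the form $1 = \frac{(1-\theta)r/r_0}{1} \cdot \frac{1}{r/r_0}$, i.e. Hölder with exponents $\frac{r_0}{(1-\theta)r}$ and $\frac{r_1}{\theta r}$. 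Once that pointwise inequality and the volume splitting are in place, the rest is a direct application of Theorem \ref{th-150928-2}. (Alternatively, if one prefers to avoid computing the interpolation inequality by hand, one can note that $\overset{\diamond}{{\mathcal E}}{}^{s_0}_{p_0q_0r_0} \cap \overset{\diamond}{{\mathcal E}}{}^{s_1}_{p_1q_1r_1} \subset [{\mathcal E}^{s_0}_{p_0q_0r_0}, {\mathcal E}^{s_1}_{p_1q_1r_1}]_\theta \subset {\mathcal E}^s_{pqr}$ by \eqref{eq:151113-11y-2}, giving the embedding into ${\mathcal E}^s_{pqr}$ for free; one still needs the convergence of $S_N f$ in the $s,p,q,r$ norm, which the explicit Hölder estimate provides and the abstract inclusion does not.)
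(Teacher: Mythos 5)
Your proposal is correct and follows essentially the same route as the paper: apply Theorem \ref{th-150928-2} in both endpoint spaces, deduce $\|f-\sum_{j=0}^N\varphi_j(D)f\|_{{\mathcal E}^s_{pqr}}\to 0$ from the H\"{o}lder-type inequality $\|g\|_{{\mathcal E}^s_{pqr}}\le\|g\|_{{\mathcal E}^{s_0}_{p_0q_0r_0}}^{1-\theta}\|g\|_{{\mathcal E}^{s_1}_{p_1q_1r_1}}^{\theta}$, and conclude by Theorem \ref{th-150928-2} again. The paper simply asserts this inequality as ``the H\"{o}lder inequality''; your spelled-out derivation (H\"{o}lder on the $j$-sum, then on the ball integral, then the splitting of the volume factor, which in fact only needs \eqref{eq:151113-2}) is exactly the justification it leaves implicit.
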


\begin{proof}
We take 
$f\in 
\overset{\diamond}{{\mathcal E}}{}^{s_0}_{p_0q_0r_0}
\cap
\overset{\diamond}{{\mathcal E}}{}^{s_1}_{p_1q_1r_1}
$.
Theorem \ref{th-150928-2}
implies that 
\begin{equation}\label{151002-1}
\left\|
f
-
\sum_{j=0}^N\varphi_j(D)f
\right\|_{{\mathcal E}^{s_k}_{p_kq_kr_k}}
\to
0
\end{equation}
as $N\to\infty$ for $k=0,1$.
By the H\"{o}lder inequality,
we have
\begin{align}\label{eq:170419-1}
\left\|
f
-
\sum_{j=0}^N\varphi_j(D)f
\right\|_{{\mathcal E}^{s}_{pqr}}
\le \left\|
f
-
\sum_{j=0}^N\varphi_j(D)f
\right\|_{{\mathcal E}^{s_0}_{p_0q_0r_0}}^{1-\theta}
\left\|
f
-
\sum_{j=0}^N\varphi_j(D)f
\right\|_{{\mathcal E}^{s_1}_{p_1q_1r_1}}^\theta.
\end{align}
Combining \eqref{151002-1} and \eqref{eq:170419-1},
we obtain the desired result.
\end{proof}

\section{Proofs}
\label{section:3}

\subsection{Proof of Theorem \ref{thm:170426s-2}}
 According to \cite[Corollary 1.11]{YYZ13},
 we have
 \begin{equation}\label{eq:170426s-11}
 [
 {\mathcal E}^{s_0}_{p_0q_0r_0},
 {\mathcal E}^{s_1}_{p_1q_1r_1}]_\theta
 \subset
 {\mathcal E}^{s}_{pqr}
 \end{equation}
 with equivalence of norms.
 Based on (\ref{eq:170426s-11}),
 we prove (\ref{eq:170426s-12})
 as follows:
 First,
 if $G \in {\mathcal G}({\mathcal E}^{s_0}_{p_0q_0r_0},
 {\mathcal E}^{s_1}_{p_1q_1r_1})$.
 Then
 $$F_j(z) := -i 2^{j}(G(z+2^{-j}i)-G(z))$$
 belongs to ${\mathcal F}({\mathcal E}^{s_0}_{p_0q_0r_0},
 {\mathcal E}^{s_1}_{p_1q_1r_1})$
 and the norm is less than or equal to
 $\|G\|_{{\mathcal G}({\mathcal E}^{s_0}_{p_0q_0r_0},
 	{\mathcal E}^{s_1}_{p_1q_1r_1})}$.
 According to (\ref{eq:170426s-11}), 
 we have
 $$\|F_j\|_{{\mathcal E}^{s}_{pqr}}\lesssim\|G\|_{{\mathcal G}({\mathcal E}^{s_0}_{p_0q_0r_0},
 	{\mathcal E}^{s_1}_{p_1q_1r_1})}.$$
 Since
 $F_j \to G(\theta)$ as $j \to \infty$ in 
 ${\mathcal E}^{s_0}_{p_0q_0r_0}+	{\mathcal E}^{s_1}_{p_1q_1r_1}$,
 and hence in ${\mathcal S}'({\mathbb R}^n)$,
 by the Fatou property 
 $\|G(\theta)\|_{{\mathcal E}^{s}_{pqr}}\lesssim\|G\|_{{\mathcal G}({\mathcal E}^{s_0}_{p_0q_0r_0},
 	{\mathcal E}^{s_1}_{p_1q_1r_1})}$.
 
 Conversely, let $f \in 	{\mathcal E}^{s}_{pqr}$ with norm $1$.
 Define linear functions
 $\rho_1,\rho_2,\rho_3,\rho_4$
 of the variable $z \in {\mathbb C}$ uniquely by
 \begin{align*}
 \rho_1(l):=s\frac{r}{r_l}-s_l, \quad
 \rho_2(l):=\frac{p}{p_l}-\frac{r}{r_l}, \quad
 \rho_3(l):=1-\frac{p}{p_l}, \quad
 \rho_4(l):=\frac{r}{r_l}, \quad l=0,1.
 \end{align*}
 Since
 $\rho_k(\theta)=(1-\theta)\rho_k(0)+\theta\,\rho_k(1)$,
 $\rho_k(\theta)=0$
 for $k=1,2,3$
 and $\rho_4(\theta)=1$.
 Define
 \[
 F_\nu(z):=
 \varphi_\nu(D)\left[
 2^{\nu \rho_1(z)}
 \left(\sum_{j=1}^\nu |2^{j s}\varphi_j(D)|^r
 \right)^{\frac{\rho_2(z)}{r}}
 \| f \|_{{\mathcal E}^{s}_{pqr}}^{\rho_3(z)}
 {\rm sgn}(\varphi_\nu(D)f)
 |\varphi_{\nu}(D)f|^{\rho_4(z)}\right],
 \]
 \[
 F(z):=
 \sum_{\nu=0}^\infty F_\nu(z),
 \]
 and 
 \[
 G(z):=\int_{\theta}^{z}
 F(w) \ dw.
 \]
 In Section \ref{ss331},
we prove
\begin{equation}\label{eq:170426-11}
G \in {\mathcal G}({\mathcal E}_{p_0q_0r_0}^{s_0},
{\mathcal E}_{p_1q_1r_1}^{s_1}).
\end{equation}
 So,
 \[
 \|f\|_{[{\mathcal E}_{p_0q_0r_0}^{s_0},{\mathcal E}_{p_1q_1r_1}^{s_1}]^\theta}
 \le
 \|G'\|_{{\mathcal G}({\mathcal E}_{p_0q_0r_0}^{s_0},{\mathcal E}_{p_1q_1r_1}^{s_1})}
 \lesssim
1.
 \]
\subsection{Proof of Theorem \ref{th-150928-2}}

Suppose that
$\sum_{j=0}^N\varphi_j(D)f \to f$
in  
${\mathcal E}^s_{p q r}$ as $N\to \infty$.
Let $\alpha$ be a multiindex.
Then since
\begin{align*}
\sum_{j=0}^N\varphi_j(D)f
=
\sum_{j=0}^N{\mathcal F}^{-1}
\left[\sum_{k=0}^{N+1}\varphi_k \cdot \varphi_j{\mathcal F}f\right]
=c_n
\sum_{j=0}^N
\sum_{k=0}^{N+1}{\mathcal F}^{-1}\varphi_k*\varphi_j(D)f
\end{align*}
for some constant $c_n>0$,
it follows that
\[
\frac{\partial^\alpha}{\partial x^\alpha}\sum_{j=0}^N\varphi_j(D)f
=c_{n,\alpha}
\sum_{j=0}^N
\sum_{k=0}^{N+1}{\mathcal F}^{-1}[\xi^\alpha\varphi_k]*\varphi_j(D)f.
\]
Since
$
{\mathcal F}^{-1}[\xi^\alpha\varphi_k] \in 
{\mathcal S} \subset L^1
$
and
$p,q,r>1$, we have
\[
\frac{\partial^\alpha}{\partial x^\alpha}\sum_{j=0}^N\varphi_j(D)f
\in \mathcal{E}{}^s_{pqr}.
\]
Thus $f \in \overset{\diamond}{\mathcal E}{}^s_{p q r}$.

Suppose instead that $f \in \overset{\diamond}{\mathcal E}{}^s_{p q r}$.
Let $\varepsilon>0$ be arbitrary.
Then by the definition of $\overset{\diamond}{\mathcal E}{}^s_{p q r}$,
we can find $g \in {\mathcal E}^s_{p q r}$
such that $\partial^\alpha g \in {\mathcal E}^s_{p q r}$
and that
$\|g-f\|_{{\mathcal E}^s_{p q r}}<\varepsilon$.
Then for $N \ge 3$, we have
\begin{align*}
\left\|g-\sum_{j=0}^N\varphi_j(D)g
\right\|_{{\mathcal E}^s_{p q r}}
&\le 2
\left\|\left(
\sum_{k=0}^\infty
2^{k s r}\left|\varphi_k(D)\left[
g-\sum_{j=0}^N\varphi_j(D)g\right]\right|^r
\right)^{\frac1r}
\right\|_{{\mathcal M}^p_q}.
\end{align*}
From the size of the support condition,
we obtain
\begin{align*}
\left\|g-\sum_{j=0}^N\varphi_j(D)g
\right\|_{{\mathcal E}^s_{p q r}}
&\le 2
\left\|\left(
\sum_{k=N}^\infty
2^{k s r}\left|\varphi_k(D)\left[
g-\sum_{j=0}^N\varphi_j(D)g\right]\right|^r
\right)^{\frac1r}
\right\|_{{\mathcal M}^p_q}.
\end{align*}
Since ${\mathcal F}^{-1}\varphi_k(\xi)=2^{k n}{\mathcal F}^{-1}\varphi(2^{-k}\xi)$,
we can use the Hardy-Littlewood maximal operator to have
\begin{align*}
\left\|g-\sum_{j=0}^N\varphi_j(D)g
\right\|_{{\mathcal E}^s_{p q r}}
&\le C
\left\|\left(
\sum_{k=N}^\infty
2^{k s r}M[\varphi_k(D)g]^r
\right)^{\frac1r}
\right\|_{{\mathcal M}^p_q}.
\end{align*}
By Theorem \ref{thm:vec-maxi},
we have
\begin{align}\label{eq:170427-11}
\left\|g-\sum_{j=0}^N\varphi_j(D)g
\right\|_{{\mathcal E}^s_{p q r}}
&\le C
\left\|\left(
\sum_{k=N}^\infty
2^{k s r}|\varphi_k(D)g|^r
\right)^{\frac1r}
\right\|_{{\mathcal M}^p_q}.
\end{align}
Let us set
\[
\varphi^*_k(\xi)
:=
\frac{\varphi_{k-1}(\xi)+\varphi_k(\xi)+\varphi_{k+1}(\xi)}{|2^{-k}\xi|^2}.
\]
Then we have
$\varphi_k(D)g=-2^{-2k}\varphi_k^*(D)\varphi_k(D)[\Delta g].$
Inserting this relation into (\ref{eq:170427-11}),
we obtain
\begin{align*}
\left\|g-\sum_{j=0}^N\varphi_j(D)g
\right\|_{{\mathcal E}^s_{p q r}}
&\le C
\left\|\left(
\sum_{k=N}^\infty
2^{k (s-2) r}
|\varphi^*_k(D)\varphi_k(D)[\Delta g]|^r
\right)^{\frac1r}
\right\|_{{\mathcal M}^p_q}.
\end{align*}
Again by using the convolution and the maximal operator,
we obtain
\begin{align*}
\left\|g-\sum_{j=0}^N\varphi_j(D)g
\right\|_{{\mathcal E}^s_{p q r}}
&\le C2^{-2N}
\left\|\left(
\sum_{k=N}^\infty
2^{k s r}M\left[\varphi_k(D)[\Delta g]\right]^r
\right)^{\frac1r}
\right\|_{{\mathcal M}^p_q}.
\end{align*}
Using Theorem \ref{thm:vec-maxi} once more,
we have
\begin{align*}
\left\|g-\sum_{j=0}^N\varphi_j(D)g
\right\|_{{\mathcal E}^s_{p q r}}
&\le C2^{-2N}
\left\|\left(
\sum_{k=N}^\infty
2^{k s r}|\varphi_k(D)[\Delta g]|^r
\right)^{\frac1r}
\right\|_{{\mathcal M}^p_q}\\ 
&\le 
C2^{-2N}\|\Delta g\|_{{\mathcal E}^s_{p q r}}.
\end{align*}
Since $\Delta g \in {\mathcal E}^s_{p q r}$,
there exists $N_0 \in {\mathbb N}$ such that
\[
\left\|g-\sum_{j=0}^N\varphi_j(D)g
\right\|_{{\mathcal E}^s_{p q r}}
<\varepsilon
\]
as long as $N \ge N_0$.

If we use Theorem \ref{thm:vec-maxi},
we obtain
\[
\left\|
\sum_{j=0}^N\varphi_j(D)g
-
\sum_{j=0}^N\varphi_j(D)f
\right\|_{{\mathcal E}^s_{p q r}}
\le C
\|f-g\|_{{\mathcal E}^s_{p q r}}
\le C\varepsilon.
\]
Thus, if $N \ge N_0$, then we have
\begin{align*}
\lefteqn{
\left\|f-
\sum_{j=0}^N\varphi_j(D)f
\right\|_{{\mathcal E}^s_{p q r}}
}\\
&\le
\|f-g\|_{{\mathcal E}^s_{p q r}}
+
\left\|g-
\sum_{j=0}^N\varphi_j(D)g
\right\|_{{\mathcal E}^s_{p q r}}
+
\left\|
\sum_{j=0}^N\varphi_j(D)g
-
\sum_{j=0}^N\varphi_j(D)f
\right\|_{{\mathcal E}^s_{p q r}}\\
&\le (2+C)\varepsilon,
\end{align*}
as required.

\subsection{Proof of $\subset$ in $(\ref{eq:151113-12})$}

Let 
$f\in [
\overset{\diamond}{\mathcal E}{}^{s_0}_{p_0q_0r_0},
\overset{\diamond}{\mathcal E}{}^{s_1}_{p_1q_1r_1}]^\theta$.
By Lemma \ref{lem:dense2} and Proposition \ref{cr-151002-1}, we have
\[f \in 
{\mathcal E}^s_{p q r}
\cap
\overline{\overset{\diamond}{{\mathcal E}}{}^s_{pqr}}
^{{\mathcal E}^{s_0}_{p_0q_0r_0}+{\mathcal E}^{s_1}_{p_1q_1r_1}}.
\]
Therefore,
\[
f=f_k+f_{k,0}+f_{k,1},
\]
where 
$f_k \in \overset{\diamond}{{\mathcal E}}{}^s_{pqr}$,
$f_{k,0} \in {\mathcal E}^{s_0}_{p_0q_0r_0}$,
$f_{k,1} \in {\mathcal E}^{s_1}_{p_1q_1r_1}$
for each $k \in {\mathbb N}$
and
\[
\|f_{k,0}\|_{{\mathcal E}^{s_0}_{p_0q_0r_0}}
+
\|f_{k,1}\|_{{\mathcal E}^{s_1}_{p_1q_1r_1}} \le k^{-1}.
\]
For $0<a<1$ and $b>0$, we see that
\begin{align*}
S(f;a,J,r,s)
&=
\chi_{[0,b S(f;r,s)]}
\left(
\sqrt[r]{\sum_{j=J}^\infty |2^{j s}\varphi_j(D)f|^r}
\right) S(f;a,J,r,s)\\
&\quad+
\chi_{(b S(f;r,s),\infty)}
\left(
\sqrt[r]{\sum_{j=J}^\infty |2^{j s}\varphi_j(D)f|^r}
\right) S(f;a,J,r,s)\\
&\le b S(f;r,s)
+
\chi_{(a b,a^{-1}]}
\left(
\sqrt[r]{\sum_{j=J}^\infty |2^{j s}\varphi_j(D)f|^r}
\right)
\sqrt[r]{\sum_{j=J}^\infty |2^{j s}\varphi_j(D)f|^r}.
\end{align*}
Thus,
\begin{align*}
\lefteqn{
\limsup_{J \to \infty}\|S(f;a,J,r,s)\|_{{\mathcal M}^p_q}
}\\
&\lesssim
b\|f\|_{{\mathcal E}^s_{p q r}}
+
\limsup_{J \to \infty}
\left\|
\chi_{(a b,a^{-1}]}
\left(
\sqrt[r]{\sum_{j=J}^\infty |2^{j s}\varphi_j(D)f|^r}
\right)
\sqrt[r]{\sum_{j=J}^\infty |2^{j s}\varphi_j(D)f|^r}
\right\|_{{\mathcal M}^p_q}.
\end{align*}
Once we show that
\begin{equation}\label{eq:s151217-1}
\lim_{J \to \infty}
\left\|
\chi_{(a b,a^{-1}]}
\left(
\sqrt[r]{\sum_{j=J}^\infty |2^{j s}\varphi_j(D)f|^r}
\right)
\sqrt[r]{\sum_{j=J}^\infty |2^{j s}\varphi_j(D)f|^r}
\right\|_{{\mathcal M}^p_q}=0,
\end{equation}
then we have the desired result.
By setting
\[
\Phi_A(t)
:=\max(0,(t-A)(A^{-1}-t))^r \quad (t \in {\mathbb R}),
\]
where $A>0$, we have only to show that
\begin{equation}
\lim_{J \to \infty}
\left\|
\Phi_A\left(\sqrt[r]{\sum_{j=J}^\infty |2^{j s}\varphi_j(D)f|^r}\right)
\right\|_{{\mathcal M}^p_q}
=0
\end{equation}
in ${\mathcal M}^p_q$ for all $A>0$.

By the mean-value theorem, we have
\begin{align*}
\lefteqn{
\left|
\Phi_A\left(\sqrt[r]{\sum_{j=J}^\infty |2^{j s}\varphi_j(D)f|^r}\right)
-
\Phi_A\left(\sqrt[r]{\sum_{j=J}^\infty |2^{j s}\varphi_j(D)f_k|^r}\right)
\right|
}\\
&\lesssim_A
\min\left(1,\left|
\sqrt[r]{\sum_{j=J}^\infty |2^{j s}\varphi_j(D)f|^r}
-
\sqrt[r]{\sum_{j=J}^\infty |2^{j s}\varphi_j(D)f_k|^r}
\right|\right)
\\
&\le
\min\left(1,
\sqrt[r]{\sum_{j=J}^\infty |2^{j s}\varphi_j(D)(f-f_k)|^r}\right)
=
\min\left(1,
\sqrt[r]{\sum_{j=J}^\infty |2^{j s}\varphi_j(D)(f_{k,0}+f_{k,1})|^r}\right).
\end{align*}
We let
\begin{align*}
{\mathfrak A}
&:=
\chi_{[A,A^{-1}]}(S(f;r,s))
\Phi_A\left(\sqrt[r]{\sum_{j=J}^\infty |2^{j s}\varphi_j(D)f|^r}\right)\\
{\mathfrak B}
&:=
\chi_{[A,A^{-1}]}(S(f;r,s))
\Phi_A\left(\sqrt[r]{\sum_{j=J}^\infty |2^{j s}\varphi_j(D)f_k|^r}\right).
\end{align*}
So, we have
\begin{align}\label{eq:17422-1} 
\|{\mathfrak A}-{\mathfrak B}\|_{{\mathcal M}^p_q}
&\lesssim_A
\left\|
\chi_{[A,A^{-1}]}(S(f;r,s))
\min\left\{
\left(
\sum_{j=J}^\infty
|2^{j s}\varphi_j(D)f_{k,0}|^r
\right)^{\frac1r},1
\right\}
\right\|_{{\mathcal M}^{p}_{q}}
\nonumber
\\
&\quad+
\left\|
\chi_{[A,A^{-1}]}(S(f;r,s))
\min\left\{
\left(
\sum_{j=J}^\infty
|2^{j s}\varphi_j(D)f_{k,1}|^r
\right)^{\frac1r},1\right\}
\right\|_{{\mathcal M}^{p}_{q}}.
\end{align}
Recall that we are assuming $r_0=r_1=r$ and $s_0=s_1=s$.
By using $q_0>q>q_1$,
$\frac{p}{q}=\frac{p_0}{q_0}
=\frac{p_1}{q_1}$, and
the H\"{o}lder inequality, we get
\begin{align}\label{eq:17422-2}
\lefteqn{
\|{\mathfrak A}-{\mathfrak B}\|_{{\mathcal M}^p_q}
}\nonumber\\
&\lesssim_A
\left\|
\left(
\sum_{j=0}^\infty
|2^{j s_0}\varphi_j(D)f_{k,0}|^{r_0}
\right)^{\frac1{r_0}}
\right\|_{\cM^{p_0}_{q_0}}
\left\|
\left(
\sum_{j=0}^\infty
|2^{j s}\varphi_j(D)f|^r
\right)^{\frac1r}
\right\|_{{\mathcal M}^{p}_{q}}^{1-\frac{q}{q_0}}
\nonumber
\\
&\quad+
\left\|
\left(
\sum_{j=0}^\infty
|2^{j s_1}\varphi_j(D)f_{k,1}|^{r_1}
\right)^{\frac1{r_1}}
\right\|_{ \cM^{p_1}_{q_1}}^{\frac{q_1}{q}}
\nonumber
\\
&\lesssim_A
\|f_{k,0}\|_{{\mathcal E}^{s_0}_{p_0q_0r_0}}
\|f\|_{{\mathcal E}^{s}_{pqr}}^{1-\frac{q}{q_0}}
+
\|f_{k,1}\|_{{\mathcal E}^{s_1}_{p_1q_1r_1}}
^{\frac{q_1}{q}}.
\end{align}
Consequently,
\begin{eqnarray*}
&&\left\|
\chi_{[A,A^{-1}]}(S(f;r,s))
\Phi_A\left(\sqrt[r]{\sum_{j=J}^\infty |2^{j s}\varphi_j(D)f|^r}\right)
\right\|_{\cM^p_q}\\
&& \lesssim_A
\|f_{k,0}\|_{{\mathcal E}^{s_0}_{p_0q_0r_0}}
\|f\|_{{\mathcal E}^{s}_{pqr}}^{1-\frac{q}{q_0}}
+
\|f_{k,1}\|_{{\mathcal E}^{s_1}_{p_1q_1r_1}}^{\frac{q_1}{q}}
+\left\|
\sqrt[r]{\sum_{j=J}^\infty |2^{j s}\varphi_j(D)f_k|^r}
\right\|_{{\mathcal M}^p_q}.
\end{eqnarray*}
By letting $J \to \infty$,
we obtain
\begin{align*}
\limsup_{J \to \infty}
\left\|
\Phi_A\left(\sqrt[r]{\sum_{j=J}^\infty |2^{j s}\varphi_j(D)f|^r}\right)
\right\|_{{\mathcal M}^p_q}
\lesssim_A
\|f_{k,0}\|_{{\mathcal E}^{s_0}_{p_0q_0r_0}}
\|f\|_{{\mathcal E}^{s}_{pqr}}^{1-\frac{q}{q_0}}
+
\|f_{k,1}\|_{{\mathcal E}^{s_1}_{p_1q_1r_1}}
^{\frac{q_1}{q}}.
\end{align*}
Finally letting $k \to \infty$,
we obtain (\ref{eq:s151217-1}).

\subsection{Proof of $\subset$ in $(\ref{eq:151113-11})$}

We readily obtain
the inclusion by combining 
\eqref{eq:151113-12}, 
\eqref{eq:Bergh}, 
Proposition \ref{cr-151002-1}
and 
$$
[
\overset{\diamond}{{\mathcal E}}{}^{s_0}_{p_0q_0r_0},
\overset{\diamond}{{\mathcal E}}{}^{s_1}_{p_1q_1r_1}]_\theta
\subset
[
{\mathcal E}^{s_0}_{p_0q_0r_0},
{\mathcal E}^{s_1}_{p_1q_1r_1}]_\theta.
$$

\subsection{Proof of $\supset$ in $(\ref{eq:151113-11})$}

We choose $\psi$ 
so that
\[
{\rm supp}(\psi) \subset B(8)
\]
and define
\[
\varphi_j:=\psi(2^{-j}\cdot)-\psi(2^{-j+1}\cdot).
\]
Write $\varphi_j=\varphi(2^{-j}\cdot)$
as before.

Let 
$f \in
\overset{\diamond}{{\mathcal E}}{}^{s}_{pqr}
\cap
[
{\mathcal E}^{s_0}_{p_0q_0r_0},
{\mathcal E}^{s_1}_{p_1q_1r_1}]_\theta.$
Then since
$f \in
\overset{\diamond}{{\mathcal E}}{}^{s}_{pqr}$,
we have
\[
f=\psi(D)f+\sum_{j=1}^\infty \varphi_j(D)f
\]
in ${\mathcal E}^s_{pqr}$.
We write
$\displaystyle
f_J:=\psi(D)f+\sum_{j=1}^J \varphi_j(D)f.
$
By virtue of Theorem \ref{thm:170426s-2}
and
$f_J-f_{J'} \in 
[
{\mathcal E}^{s_0}_{p_0q_0r_0},
{\mathcal E}^{s_1}_{p_1q_1r_1}]_\theta$
for any $J,J' \in {\mathbb N}$,
we can use
(\ref{eq:170426s-12}) and (\ref{eq:Bergh})
to have
\[
\|f_J-f_{J'}\|_{
[
{\mathcal E}^{s_0}_{p_0q_0r_0},
{\mathcal E}^{s_1}_{p_1q_1r_1}]_\theta}
=
\|f_J-f_{J'}\|_{
[
{\mathcal E}^{s_0}_{p_0q_0r_0},
{\mathcal E}^{s_1}_{p_1q_1r_1}]^\theta}
\sim
\|f_J-f_{J'}\|_{{\mathcal E}^{s}_{pqr}}.
\]
Since
$f_J-f_{J'}
\in [
{\mathcal E}^{s_0}_{p_0q_0r_0},
{\mathcal E}^{s_1}_{p_1q_1r_1}]_\theta
$
and
${\rm supp}({\mathcal F}(f_J-f_{J'}))$
is a compact set in ${\mathbb R}^n \setminus \{0\}$,
we can find $F_{J,J'} \in {\mathcal F}(
\overset{\diamond}{\mathcal E}{}^{s_0}_{p_0q_0r_0},
\overset{\diamond}{\mathcal E}{}^{s_1}_{p_1q_1r_1})$
such that
\[
F_{J,J'}(\theta)=f_J-f_{J'}, \quad
\|F_{J,J'}\|_{ 
\cF({\mathcal E}^{s_0}_{p_0q_0r_0},
{\mathcal E}^{s_1}_{p_1q_1r_1})}
\lesssim
\|f_J-f_{J'}\|_{ [
	{\mathcal E}^{s_0}_{p_0q_0r_0},
	{\mathcal E}^{s_1}_{p_1q_1r_1}]_\theta}
\]
Thus, it follows that
\begin{align*}
\|f_J-f_{J'}\|_{ [
\overset{\diamond}{\mathcal E}{}^{s_0}_{p_0q_0r_0},
\overset{\diamond}{\mathcal E}{}^{s_1}_{p_1q_1r_1}]_\theta}
&\le
\|F_{J,J'}\|_{{\mathcal F}(
\overset{\diamond}{\mathcal E}{}^{s_0}_{p_0q_0r_0},
\overset{\diamond}{\mathcal E}{}^{s_1}_{p_1q_1r_1})}\\
&\lesssim
\|F_{J,J'}(\theta)\|_{ [
{\mathcal E}^{s_0}_{p_0q_0r_0},
{\mathcal E}^{s_1}_{p_1q_1r_1}]_\theta}\\
&\lesssim
\|f_J-f_{J'}\|_{{\mathcal E}^{s}_{pqr}}.
\end{align*}
Here we used \cite[Corollary 1.11]{YYZ13}
for the last inequality.
Hence
$\{f_J\}_{J=1}^\infty$
is a Cauchy sequence in 
$[
\overset{\diamond}{\mathcal E}{}^{s_0}_{p_0q_0r_0},
\overset{\diamond}{\mathcal E}{}^{s_1}_{p_1q_1r_1}]_\theta$.
Since
$\{f_J\}_{J=1}^\infty$
converges to $f$ in 
${\mathcal E}{}^{s_0}_{p_0q_0r_0}
+
{\mathcal E}{}^{s_1}_{p_1q_1r_1}.$
We see that
$\{f_J\}_{J=1}^\infty$
converges to $f$ in 
$[
\overset{\diamond}{\mathcal E}{}^{s_0}_{p_0q_0r_0},
\overset{\diamond}{\mathcal E}{}^{s_1}_{p_1q_1r_1}]_\theta$.

\subsection{Proof of $\supset$ in $(\ref{eq:151113-12})$}
Let $f \in {\mathcal E}^s_{p q r}$ be such that
$\displaystyle
\lim_{J \to \infty}
\|S(f;a,J,r,s)\|_{{\mathcal M}^p_q}
=0
$
for all $0<a<1$.
We suppose that $f$ has ${\mathcal E}^s_{p q r}$-norm $1$.
Choose
$\varphi \in {\mathcal S}$ 
so that $\varphi \ge 0$ and
$
\chi_{B(2)} \le \varphi^2 \le \chi_{B(3)}.
$
Write $\varphi_0:=\varphi$ and  $\varphi_j:=\sqrt{\varphi_0(2^{-j}\cdot)^2-\varphi_0(2^{-j+1} \cdot)^2}$  for $j\in \N$.
Then, $\{\varphi_j\}_{j=0}^\infty$ satisfies
\begin{equation*}
\sum_{j=0}^\infty\varphi_j^2=1.
\end{equation*}
For each $\nu \in \N \cup \{ 0\}$, define 
\[
V_\nu(f):=
\left(\sum_{j=0}^\nu |2^{js}\varphi_j(D)f|^r
\right)^{\frac{1}{r}}
\]
For $z\in \overline{U}$, we define 
\begin{equation}\label{eq:170301-52}
F(z)
:=
\sum_{\nu=0}^\infty
\varphi_\nu(D) 
\left(
V_{\nu}(f)^{p\left( \frac{1-z}{p_0}+\frac{z}{p_1} \right)-1}
\cdot 
\varphi_{\nu}(D)f
\right)
\end{equation}
and
\[
G(z):=\int_\theta^z F(w)\,dw.
\]
We prove in Section \ref{ss331} that 
\begin{align}\label{eq:17414-1}
G \in {\mathcal G}(
\overset{\diamond}{{\mathcal E}}{}^{s_0}_{p_0q_0r_0},
\overset{\diamond}{{\mathcal E}}{}^{s_1}_{p_1q_1r_1}),
\quad
\|G\|_{{\mathcal G}(
\overset{\diamond}{{\mathcal E}}{}^{s_0}_{p_0q_0r_0},
\overset{\diamond}{{\mathcal E}}{}^{s_1}_{p_1q_1r_1})} \lesssim 1.
\end{align}
From \eqref{eq:17414-1} and $f=G'(\theta)$, we conclude that 
$f\in [
\overset{\diamond}{{\mathcal E}}{}^{s_0}_{p_0q_0r_0},
\overset{\diamond}{{\mathcal E}}{}^{s_1}_{p_1q_1r_1}]^\theta$,
as desired.

\section{Proof of \eqref{eq:170426-11} and \eqref{eq:17414-1}}\label{ss331}

Let $p_0,p_1,p,\ldots$ be the same as before.
We check the conditions of membership of $\cG(
\overset{\diamond}{{\mathcal E}}{}^{s_0}_{p_0q_0r_0},
\overset{\diamond}{{\mathcal E}}{}^{s_1}_{p_1q_1r_1})$  by proving the following lemmas.
\begin{lemma}\label{lem:17415-1}\
\begin{enumerate}
\item
Let $f \in  
{\mathcal E}^{s}_{pqr}$.
For $z\in \overline{U}$, we have $G(z)\in 
{\mathcal E}^{s_0}_{p_0q_0r_0}
+
{\mathcal E}^{s_1}_{p_1q_1r_1}$. 
Moreover,
\begin{align}\label{eq:170323-2a}
\sup\limits_{z\in \overline{U}}\left\|
\frac{G(z)}{1+|z|}
\right\|_{{\mathcal E}^{s_0}_{p_0q_0r_0}
	+{\mathcal E}^{s_1}_{p_1q_1r_1}}<\infty.
\end{align}
\item
Let $f \in  
\overset{\diamond}{{\mathcal E}}{}^{s}_{pqr}$.
For $z\in \overline{U}$, we have $G(z)\in 
\overset{\diamond}{{\mathcal E}}{}^{s_0}_{p_0q_0r_0}
+
\overset{\diamond}{{\mathcal E}}{}^{s_1}_{p_1q_1r_1}$. 
Moreover,
\begin{align}\label{eq:170323-2}
\sup\limits_{z\in \overline{U}}\left\|
\frac{G(z)}{1+|z|}
\right\|_{\overset{\diamond}{{\mathcal E}}{}^{s_0}_{p_0q_0r_0}
	+
	\overset{\diamond}{{\mathcal E}}{}^{s_1}_{p_1q_1r_1}}<\infty.
\end{align}
\end{enumerate}
\end{lemma}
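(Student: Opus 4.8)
The plan is to reduce both assertions to a single estimate on the integrand. Since $G(z)=\int_\theta^z F(w)\,dw$, where $F$ is the relevant construction of Section~\ref{section:3} (the one involving $\rho_1,\dots,\rho_4$ for part~(1) / \eqref{eq:170426-11}, the one with the single exponent $p(\tfrac{1-z}{p_0}+\tfrac{z}{p_1})-1$ for part~(2) / \eqref{eq:17414-1}), integrating along the polygonal path $\theta\to{\rm Re}(z)\to z$ gives $\|G(z)\|_{X_0+X_1}\le(1+|z|)\,\sup_{w\in\overline U}\|F(w)\|_{X_0+X_1}$, with $(X_0,X_1)=({\mathcal E}^{s_0}_{p_0q_0r_0},{\mathcal E}^{s_1}_{p_1q_1r_1})$ in part~(1) and the two diamond spaces in part~(2). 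Hence \eqref{eq:170323-2a} and \eqref{eq:170323-2} follow once one proves $\sup_{w\in\overline U}\|F(w)\|_{X_0+X_1}\lesssim\|f\|_{{\mathcal E}^s_{pqr}}$ and, for part~(2), that $F(w)$ in fact lies in $\overset{\diamond}{{\mathcal E}}{}^{s_0}_{p_0q_0r_0}+\overset{\diamond}{{\mathcal E}}{}^{s_1}_{p_1q_1r_1}$; the membership $G(z)\in X_0+X_1$ and the existence of the Bochner integral then follow from continuity of $w\mapsto F(w)$, itself a by-product of the same estimates applied to the tails of the $\nu$-series defining $F$.

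First I would treat the two boundary lines ${\rm Re}(w)=\ell$ $(\ell=0,1)$. Writing $a_\nu:=|2^{\nu s}\varphi_\nu(D)f|^r$ and $V_\nu(f):=(\sum_{j\le\nu}a_j)^{1/r}$, the exponents in $F_\nu(w)$ are tuned so that the $\nu$-th term carries exactly the $\cM^{p_\ell}_{q_\ell}$-homogeneity: using $|\varphi_l(D)h|\lesssim Mh$ together with the vector-valued maximal inequality (Theorem~\ref{thm:vec-maxi}) and the weighted analogue of Lemma~\ref{lem:161017-1} one passes from $\|F(w)\|_{{\mathcal E}^{s_\ell}_{p_\ell q_\ell r_\ell}}$ to the Morrey norm of the square function of the symbols, and Lemma~\ref{lem:151109-1} applied to the telescoping series $\sum_\nu a_\nu(\sum_{k\le\nu}a_k)^{\kappa-1}$ with the appropriate $\kappa>0$ sums it to $\lesssim V_\infty(f)^{p/p_\ell}=S(f;r,s)^{p/p_\ell}$, up to a harmless factor $\|f\|_{{\mathcal E}^s_{pqr}}^{\rho_3(\ell)}$. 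Since $q_\ell/p_\ell=q/p$, one has $\|S(f;r,s)^{p/p_\ell}\|_{\cM^{p_\ell}_{q_\ell}}=\|S(f;r,s)\|_{\cM^p_q}^{p/p_\ell}$, and collecting powers ($\rho_3(\ell)+p/p_\ell=1$) gives $\|F(w)\|_{{\mathcal E}^{s_\ell}_{p_\ell q_\ell r_\ell}}\lesssim\|f\|_{{\mathcal E}^s_{pqr}}$ uniformly in ${\rm Im}(w)$. For ${\rm Re}(w)=\sigma\in(0,1)$ I would run the same computation after splitting each symbol pointwise according to whether $V_\nu(f)\le1$ or $V_\nu(f)>1$, which replaces the (now possibly negative) power of $V_\nu(f)$ by one of the two boundary powers $p/p_0$, $p/p_1$ and sends the first piece into $X_0$, the second into $X_1$; the residual $\nu$-dependent factors $2^{\pm\nu\sigma(s_0-s_1)}$, and, when $r_0\neq r_1$, the mismatched exponent of $a_\nu$, have a definite sign on each piece and are absorbed by a further low/high-frequency split. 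Alternatively, once holomorphicity of $F$ has been established (another of the lemmas checking membership in ${\mathcal G}$), the interior bound is immediate from $F\in{\mathcal F}(X_0,X_1)$ and the contractive embedding $[X_0,X_1]_\sigma\hookrightarrow X_0+X_1$.

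For part~(2) one must in addition check that each piece of $F(w)$ lies in the appropriate diamond space, which by Theorem~\ref{th-150928-2} amounts to the convergence of its Littlewood--Paley partial sums in ${\mathcal E}^{s_\ell}_{p_\ell q_\ell r_\ell}$. Rerunning the telescoping estimate with the $\nu$-sum starting at $\nu\gtrsim N$, Lemma~\ref{lem:151109-1} produces the sharper majorant: on the part where $V_\nu(f)\le1$ one gets $\lesssim(V_\infty(f)^{rq/q_0}-V_N(f)^{rq/q_0})^{1/r}\le T_N^{q/q_0}$, where $T_N:=(\sum_{j>N}a_j)^{1/r}$ and we used subadditivity of $t\mapsto t^{q/q_0}$ (legitimate since $q/q_0<1$ as $q_0>q$); on the part where $V_\nu(f)>1$ one gets $\lesssim S(f;r,s)^{q/q_1-1}T_N$. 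Hence the two tails are bounded by $\|T_N\|_{\cM^p_q}^{q/q_0}$ and, by Hölder's inequality for Morrey spaces (whose exponents close up precisely because $q_1/p_1=q/p$), by $\|f\|_{{\mathcal E}^s_{pqr}}^{q/q_1-1}\|T_N\|_{\cM^p_q}$, and both vanish as $N\to\infty$ since $f\in\overset{\diamond}{{\mathcal E}}{}^s_{pqr}$ forces $\|T_N\|_{\cM^p_q}\to0$ by Theorem~\ref{th-150928-2}. These bounds being uniform in $w$, each piece is a continuous map of $w$ into the closed subspace $\overset{\diamond}{{\mathcal E}}{}^{s_\ell}_{p_\ell q_\ell r_\ell}$, so its Bochner integral stays there and $G(z)\in\overset{\diamond}{{\mathcal E}}{}^{s_0}_{p_0q_0r_0}+\overset{\diamond}{{\mathcal E}}{}^{s_1}_{p_1q_1r_1}$.

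I expect the diamond-membership step of part~(2) to be the main obstacle: the Morrey space admits no dominated-convergence theorem, so one cannot simply pass to the limit in $N$ inside the Morrey norm and must instead exhibit the explicit majorant $T_N$ --- the genuine Littlewood--Paley tail of $f$ --- whose Morrey norm is already known to vanish. A secondary difficulty is the interior-of-the-strip bound for \eqref{eq:170426-11}, where $s_0,s_1,r_0,r_1$ are unrestricted: there the four linear functions $\rho_1,\dots,\rho_4$ have to be balanced so that $\rho_1$ absorbs the smoothness shift, $\rho_3$ is a normalization by $\|f\|_{{\mathcal E}^s_{pqr}}$, and $\rho_2,\rho_4$ distribute the scaling correctly, and the residual weights flagged above must be controlled; in the setting of \eqref{eq:17414-1}, where $s_0=s_1$ and $r_0=r_1$, all of this collapses and the argument is considerably shorter.
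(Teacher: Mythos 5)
Your overall architecture coincides with the paper's: reduce everything to a uniform bound on the integrand $F(w)$, split $F=F_0+F_1$ according to $V_\nu(f)\le 1$ versus $V_\nu(f)>1$ so that $F_0$ lands in the $p_0$-space and $F_1$ in the $p_1$-space, control each boundary line by the vector-valued maximal inequality (Theorem \ref{thm:vec-maxi}) together with the telescoping Lemma \ref{lem:151109-1} with $\kappa=p/p_\ell$ (resp.\ $p r_\ell/(p_\ell r)$ in the general-parameter case), and use $p_\ell q=p q_\ell$ to convert $\|S(f;r,s)^{p/p_\ell}\|_{\cM^{p_\ell}_{q_\ell}}$ into $\|S(f;r,s)\|_{\cM^p_q}^{p/p_\ell}$. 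Where you genuinely diverge is the diamond-membership step of part (2): you majorize the Littlewood--Paley tail of $G_\ell(z)$ by powers of $T_N=\bigl(\sum_{j>N}|2^{js}\varphi_j(D)f|^r\bigr)^{1/r}$ (subadditivity of $t\mapsto t^{p/p_0}$ on the $F_0$ side, H\"{o}lder on the $F_1$ side) and let $\|T_N\|_{\cM^p_q}\to 0$, which Theorem \ref{th-150928-2} indeed guarantees for $f\in\overset{\diamond}{{\mathcal E}}{}^{s}_{pqr}$. The paper instead inserts the cutoff $\chi_{[a,a^{-1}]}(S(f;r,s))$, bounds that piece by $\|S(f;a,J-1,r,s)\|_{\cM^p_q}^{p/p_0}$, and handles the complementary piece with the logarithmic-gain machinery (Lemmas \ref{lem:161109-1}, \ref{lem:151112-1}, \ref{lem:161109-2}), sending $J\to\infty$ and then $a\to 0^+$. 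Your route is shorter, bypasses the log-lemmas entirely, and does prove the lemma as literally stated.

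It proves strictly less than the paper's argument, though, and the difference matters downstream. The lemma (through \eqref{eq:17414-1}) is applied in the proof of $\supset$ in \eqref{eq:151113-12} to an $f$ that is only assumed to satisfy $\lim_{J\to\infty}\|S(f;a,J,r,s)\|_{\cM^p_q}=0$ for every $a\in(0,1)$; such an $f$ is not known to belong to $\overset{\diamond}{{\mathcal E}}{}^{s}_{pqr}$, so $\|T_N\|_{\cM^p_q}$ need not tend to $0$ (there is no dominated convergence in $\cM^p_q$ --- exactly the phenomenon behind \eqref{eq:170427-2}). The paper's proof survives because it only ever uses the decay of the cutoff tails; yours would not, so adopting it forces you either to strengthen the hypothesis where the lemma is used (altering Theorem \ref{thm:151113-2}(2)) or to reinstate the $a$-cutoff and the log-lemmas on the region where $S(f;r,s)\notin[a,a^{-1}]$. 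A smaller point: in part (1) with $s_0\ne s_1$ and $r_0\ne r_1$, the ``further low/high-frequency split'' you invoke for interior $w$ cannot absorb the factor $2^{\nu\sigma(s_0-s_1)}$, which is unbounded in $\nu$; the workable elementary decomposition is pointwise, comparing $2^{\nu s_0}|b^{(0)}_\nu|$ with $2^{\nu s_1}|b^{(1)}_\nu|$ where $b^{(\ell)}_\nu$ denotes the $\nu$-th symbol on the line ${\rm Re}\,w=\ell$, or else your alternative via the three-lines theorem once a crude $X_0+X_1$ bound has been secured.
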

\begin{proof}
We concentrate on (\ref{eq:170323-2});
the proof of (\ref{eq:170323-2a}) being simpler.
For each $z\in \overline{U}$, we define
\begin{align*}
F_0(z)\
:=
\sum_{\nu=0}^{\infty}
\varphi_\nu(D)
\left(
V_\nu(f)^{p \left(\frac{1-z}{p_0}+\frac{z}{p_1} \right)-1}
\cdot \varphi_\nu(D)f \cdot 
 \chi_{\{V_\nu(f)\le 1\}}
\right),
\end{align*}
\[
\
F_1(z):=F(z)-F_0(z), \ 
G_0(z):= \int_{\theta}^{z} F_0(w) \ dw, \ 
{\rm and}\ 
G_1(z):=\int_{\theta}^{z} F_1(w) \ dw.
\]
We shall  show that
\begin{align}\label{eq:161016-1}
G_0(z)\in \overset{\diamond}{\mathcal{E}}{}^{s_0}_{p_0q_0r_0}
\end{align}
and 
\begin{align}\label{eq:161016-2}
G_1(z)\in \overset{\diamond}{\mathcal{E}}{}^{s_1}_{p_1q_1r_1}.
\end{align}
Let $J\in \mathbb{N} \cap [5,\infty)$.
We use \eqref{eq:150928-71}, \eqref{eq:161018-1}, and the fact that $\varphi_l \varphi_j=0$ whenever $|l-j|\ge 2$
to obtain
\begin{align}\label{eq:17418-1}
\left\|
\sum_{\ell=J}^{\infty}
\varphi_\ell(D)(G_0(z))
\right\|_{\mathcal{E}^{s_0}_{p_0q_0r_0}}
&=
\left\|
\left(
\sum_{j=J-1}^{\infty}
\left|\varphi_j(D) \left[\sum_{\ell=J}^{\infty}
\varphi_\ell(D)(2^{js_0} G_0(z)) \right] \right|^{r_0}
\right)^{\frac{1}{r_0}}
\right\|_{\cM^{p_0}_{q_0}}
\nonumber
\\
&\lesssim
\left\|
\left(
\sum_{j=J-1}^{\infty}
|\varphi_j(D)(2^{js_0} G_0(z))|^{r_0}
\right)^{\frac{1}{r_0}}
\right\|_{\cM^{p_0}_{q_0}}.
\end{align}
Let $Q:=\frac{p}{p_1}-\frac{p}{p_0}$.
Combining 
\begin{align*}
&\sum_{j=J-1}^{\infty}
|\varphi_{j}(D)[ 2^{js_0} G_0(z)]|^{r_0}
\\
&\lesssim
\sum_{j=J-1}^{\infty}
2^{js_0r_0} 
M\left(\varphi_j(D)
\left[\chi_{\{ V_j(f) \le 1 \}}  \varphi_{j}(D)f \cdot V_j(f)^{\frac{p}{p_0}-1}  \int_{\theta}^{z} V_j(f)^{Qw} \ dw \right]  \right )^{r_0},
\end{align*} 
\eqref{eq:150928-71}, \eqref{eq:161018-1}, and 
\eqref{eq:17418-1}, we get
\begin{align}\label{eq:17423-1}
&\left\|
\sum_{\ell=J}^{\infty}
\varphi_\ell(D)(G_0(z))
\right\|_{\mathcal{E}^{s_0}_{p_0q_0r_0}}
\lesssim
\|I_1\|_{\cM^{p_0}_{q_0}}
+
\|I_2\|_{\cM^{p_0}_{q_0}} 
\end{align}
where
\begin{align*}
\lefteqn{
I_1
}\\
&:=
\chi_{[a,a^{-1}]}(S(f;r,s)) 
\left(
\sum_{j=J-1}^\infty
|2^{js} \varphi_j(D)f|^{r}
V_j(f)^{\frac{pr}{p_0}-r}
\left|
\chi_{\{V_j(f)\le 1  \}}
\int_{\theta}^{z}
V_j(f)^{Qw} \ dw 
\right|^r
\right)^{\frac{1}{r}}
\end{align*}
and 
\begin{align*}
I_2
&:=
(1-\chi_{[a,a^{-1}]}(S(f;r,s)))
\\
&\times 
\left(
\sum_{j=J-1}^\infty
|2^{js} \varphi_j(D)f|^{r}
V_j(f)^{\frac{pr}{p_0}-r}
\left|
\frac{V_j(f)^{Qz}-V_j(f)^{Q\theta}}{\log(V_j(f))}
\chi_{\{V_j(f) \le 1  \}}
\right|^{r}
\right)^{\frac{1}{r}}.
\end{align*}
By virtue of Lemma \ref{lem:151109-1}, we get
\begin{align}\label{eq:170403-2}
\|I_1\|_{\cM^{p_0}_{q_0}}
&\lesssim
(1+|z|)
\left\|
\chi_{[a,a^{-1}]}(S(f;r,s)) \left(
\sum_{j=J-1}^\infty
|2^{js} \varphi_j(D)f|^{r}
V_j(f)^{\frac{pr}{p_0}-r}
\right)^{\frac{1}{r}}
\right\|_{\cM^{p_0}_{q_0}}
\nonumber
\\
&\lesssim
(1+|z|)
\left\|
\chi_{[a,a^{-1}]}(S(f;r,s))
\left(
\sum_{\ell=J-2}^{\infty}
|2^{\ell s} \varphi_\ell(D)f|^{r}
\right)^{\frac{p}{rp_0}}
\right\|_{\cM^{p_0}_{q_0}}
\nonumber
\\
&\le 
(1+|z|) 
\|S(f;a,J-1,r,s)\|_{\cM^p_q}^{\frac{p}{p_0}}.
\end{align}
We combine  Lemmas \ref{lem:161109-1},
\ref{lem:151112-1},  and \ref{lem:161109-2} to obtain
\begin{align*}
I_2 &\lesssim
(1-\chi_{[a,a^{-1}]}(S(f;r,s)) )
\left(
\sum_{j=0}^{\infty}
|2^{js} \varphi_j(D)f|^{r}
\Phi_{\frac{p}{p_0}}(V_j(f))^{r}
\right)^{\frac{1}{r}}
\\
&\lesssim
(1-\chi_{[a,a^{-1}]}(S(f;r,s)) )
\left(
\Psi_{\frac{p}{p_0}}
\left(
S(f;r,s)^r
\right)^{\frac{1}{r}}
\right)
\\
&\lesssim
\left(
a^{\frac{p}{p_0}(r-1)}
+
\left(
\log\left(
\sqrt{a}+\frac{1}{\sqrt{a}}
\right)
\right)^{-r}
\right)
S(f;r,s)^{\frac{p}{p_0}}.
\end{align*}
Consequently,
\begin{align}\label{eq:170403-3}
\|I_2\|_{\cM^{p_0}_{q_0}}
\lesssim
\left(
a^{\frac{p}{p_0}(r-1)}
+
\left(
\log\left(
\sqrt{a}+\frac{1}{\sqrt{a}}
\right)
\right)^{-r}
\right)
\|f\|_{\mathcal{E}^s_{pqr}}^{\frac{p}{p_0}}
\end{align}
Therefore, by combing \eqref{eq:17423-1}, \eqref{eq:170403-2} and \eqref{eq:170403-3},  and then taking $J\to \infty$ and $a\to 0^+$, 
we have
\[
\lim\limits_{J\to \infty}
\left\|
\sum_{\ell=J}^{\infty}
\varphi_\ell(D)(G_0(z))
\right\|_{\mathcal{E}^{s_0}_{p_0q_0r_0}}
=0.
\]
Thus, $G_0(z) \in \overset{\diamond}{\mathcal{E}}{}^{s_0}_{p_0q_0r_0}$.
By a similar argument, we also have
\[
\lim\limits_{J\to \infty}
\left\|
\sum_{\ell=J}^{\infty}
\varphi_\ell(D)(G_1(z))
\right\|_{\mathcal{E}^{s_1}_{p_1q_1r_1}}
=0,
\]
which implies \eqref{eq:161016-2}. Since $G(z)=G_0(z)+G_1(z)$, we have 
$G(z) \in \overset{\diamond}{\mathcal{E}}{}^{s_0}_{p_0q_0r_0}+\overset{\diamond}{\mathcal{E}}{}^{s_1}_{p_1q_1r_1}$,

The proof of \eqref{eq:170323-2} goes as follows. 
By virtue of  \eqref{eq:161018-1} and Theorem \ref{thm:vec-maxi}, we have
\begin{align}\label{eq:170419-2}
&\|G_0(z)\|_{\overset{\diamond}{\mathcal{E}}{}^{s_0}_{p_0q_0r_0}}
\nonumber
\\
&=
\|G_0(z)\|_{{\mathcal{E}}{}^{s_0}_{p_0q_0r_0}}
\nonumber
\\
&\lesssim
\left\|
\varphi_0(D)f \cdot V_0(f)^{\frac{p}{p_0}-1}
 \int_{\theta}^{z} V_0(f)^{Qw} \ dw \chi_{\{V_0(f) \le 1  \}}\right\|_{\cM^{p_0}_{q_0}}
\nonumber
\\
&\quad +
\left\|
\varphi_1(D)f \cdot V_1(f)^{\frac{p}{p_0}-1}
 \int_{\theta}^{z} V_1(f)^{Qw}\ dw \chi_{\{V_1(f) \le 1  \}}
 \right\|_{\cM^{p_0}_{q_0}}
\nonumber
\\
&\quad +
\left\|
\left(
\sum_{l=1}^{\infty}
\left|2^{ls} 
\varphi_l(D)f \cdot V_l(f)^{\frac{p}{p_0}-1}
\int_{\theta}^{z} V_l(f)^{Qw} \ dw \chi_{\{V_l(f) \le 1  \}}\right|^{r_0}
\right)^{\frac{1}{r_0}}
\right\|_{\cM^{p_0}_{q_0}}.
\end{align}
Combining \eqref{eq:170419-2} and
Lemma \ref{lem:151109-1}, 
we have 
\begin{align}\label{eq:17420-2}
\|G_0(z)\|_{\overset{\diamond}{\mathcal{E}}{}^{s_0}_{p_0q_0r_0}}
&\lesssim
(1+|z|)
\|\varphi(D)f\|_{\cM^p_q}^{\frac{p}{p_0}}
+
(1+|z|)
\|\varphi_1(D)f\|_{\cM^p_q}^{\frac{p}{p_0}}
\nonumber
\\
&\quad +
(1+|z|)
\left\|
\left(
\sum_{l=1}^{\infty}
|2^{ls} \varphi_l(D)f|^r
\right)^{\frac{1}{r}}
\right\|_{\cM^p_q}^{\frac{p}{p_0}}
\nonumber
\\
&\quad \lesssim
(1+|z|) \|f\|_{\mathcal{E}{}^{s}_{pqr}}^{\frac{p}{p_0}}.
\end{align}
By a similar argument 
\begin{align}\label{eq:17420-3}
\|G_0(z)\|_{\overset{\diamond}{\mathcal{E}}{}^{s_0}_{p_0q_0r_0}}
\lesssim
(1+|z|) \|f\|_{\mathcal{E}{}^{s}_{pqr}}^{\frac{p}{p_1}}.
\end{align}
Thus, \eqref{eq:170323-2} follows from \eqref{eq:17420-2} and \eqref{eq:17420-3}. 
\end{proof}

\begin{lemma}\label{lem:17415-2}\
\begin{enumerate}
\item
Let $f \in  
{\mathcal E}^{s}_{pqr}$.
Then
the function $G: \overline{U} \to {\mathcal E}^{s_0}_{p_0q_0r_0}
+{\mathcal E}^{s_1}_{p_1q_1r_1}$ is continuous 
and $G: U \to {\mathcal E}^{s_0}_{p_0q_0r_0}
+{\mathcal E}^{s_1}_{p_1q_1r_1} $ is holomorphic.
\item
Let $f \in  \overset{\diamond}{{\mathcal E}}{}^{s}_{pqr}$.
Then
the function $G: \overline{U} \to \overset{\diamond}{{\mathcal E}}{}^{s_0}_{p_0q_0r_0}
+
\overset{\diamond}{{\mathcal E}}{}^{s_1}_{p_1q_1r_1}$ is continuous 
and $G: U \to \overset{\diamond}{{\mathcal E}}{}^{s_0}_{p_0q_0r_0}
+
\overset{\diamond}{{\mathcal E}}{}^{s_1}_{p_1q_1r_1} $ is holomorphic.
\end{enumerate}
\end{lemma}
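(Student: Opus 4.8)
The plan is to reduce the whole statement to a $z$-regularity assertion for $G_0$ and $G_1$ separately and to quote Lemma~\ref{lem:17415-1} for the rest. I discuss part~(2); part~(1) is the same argument with ${\mathcal E}^{s_k}_{p_kq_kr_k}$ in place of the closed subspaces $\overset{\diamond}{{\mathcal E}}{}^{s_k}_{p_kq_kr_k}$, and is slightly simpler. Recall from the proof of Lemma~\ref{lem:17415-1} that $G=G_0+G_1$ with $G_j(z)=\int_\theta^z F_j(w)\,dw$, and that Lemma~\ref{lem:17415-1} already provides $G_0(z)\in\overset{\diamond}{{\mathcal E}}{}^{s_0}_{p_0q_0r_0}$ and $G_1(z)\in\overset{\diamond}{{\mathcal E}}{}^{s_1}_{p_1q_1r_1}$ for every $z\in\overline{U}$; rerunning the estimates \eqref{eq:17423-1}--\eqref{eq:170403-3} and \eqref{eq:17420-2}--\eqref{eq:17420-3} for $F_j$ rather than $G_j$ (the integral $\int_\theta^z$ costing only a factor $1+|z|$) also yields $\sup_{z\in\overline{U},\,|z|\le R}\bigl(\|F_0(z)\|_{{\mathcal E}^{s_0}_{p_0q_0r_0}}+\|F_1(z)\|_{{\mathcal E}^{s_1}_{p_1q_1r_1}}\bigr)<\infty$ for every $R>0$. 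Since $\overset{\diamond}{{\mathcal E}}{}^{s_k}_{p_kq_kr_k}$ is closed in ${\mathcal E}^{s_k}_{p_kq_kr_k}$, continuity (resp.\ holomorphy) into the closed subspace is equivalent to the same into the ambient space once the values are known to lie in the subspace; so it suffices to prove that $G_0\colon\overline{U}\to{\mathcal E}^{s_0}_{p_0q_0r_0}$ is continuous and $G_0\colon U\to{\mathcal E}^{s_0}_{p_0q_0r_0}$ is holomorphic, and likewise for $G_1$ and ${\mathcal E}^{s_1}_{p_1q_1r_1}$, and then to add the two.

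First I would establish the corresponding facts for $F_0$. Write $F_0(z)=\sum_{\nu=0}^\infty F_{0,\nu}(z)$ with $F_{0,\nu}(z)=\varphi_\nu(D)\bigl(V_\nu(f)^{\frac{p}{p_0}-1+Qz}\,\varphi_\nu(D)f\,\chi_{\{V_\nu(f)\le1\}}\bigr)$ and $Q:=\frac{p}{p_1}-\frac{p}{p_0}>0$. Fixing $z\in U$ and $0<\varepsilon<\min\{Q\,{\rm Re}(z),\,p/p_0\}$, the remainder in the $z$-difference quotient of the inner function equals $V_\nu(f)^{\frac{p}{p_0}-1+Qz}\varphi_\nu(D)f\,\chi_{\{V_\nu(f)\le1\}}\cdot Q\log V_\nu(f)\cdot\bigl(\tfrac{V_\nu(f)^{Qh}-1}{Qh\log V_\nu(f)}-1\bigr)$, and \eqref{eq:150514-3} bounds the last factor by $C_\varepsilon|h|\,V_\nu(f)^{-\varepsilon}$ for $|h|$ small. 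Combining this with the elementary inequality $|\varphi_\nu(D)f|\le 2^{-\nu s}V_\nu(f)$ (immediate from the definition of $V_\nu(f)$) dominates the remainder pointwise by $C_{z,\varepsilon}|h|\,2^{-\nu s}V_\nu(f)^{p/p_0}$, where one uses that $t\mapsto t^{Q{\rm Re}(z)-\varepsilon}|\log t|$ is bounded on $(0,1]$ because $Q\,{\rm Re}(z)-\varepsilon>0$. Since $\|V_\nu(f)^{p/p_0}\|_{\cM^{p_0}_{q_0}}=\|V_\nu(f)\|_{\cM^p_q}^{p/p_0}\le\|f\|_{{\mathcal E}^s_{pqr}}^{p/p_0}$, the difference quotient of the inner function converges in $\cM^{p_0}_{q_0}$, and, after applying $\varphi_\nu(D)$ and using \eqref{eq:161018-1} together with Theorem~\ref{thm:vec-maxi}, $F_{0,\nu}$ is complex-differentiable at $z$; as $z\in U$ was arbitrary, each $F_{0,\nu}$ is holomorphic on $U$ with values in ${\mathcal E}^{s_0}_{p_0q_0r_0}$. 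Because $\varphi_\ell(D)F_{0,\nu}(z)=0$ unless $|\ell-\nu|\le1$, the $\nu$-tail of $\sum_{\nu}F_{0,\nu}(z)$ is, up to harmless index shifts, a Littlewood--Paley tail of $F_0(z)$, so the estimates in the proof of Lemma~\ref{lem:17415-1} bound it uniformly for $z$ in compact subsets of $\overline{U}$; a locally uniform limit of holomorphic Banach-space-valued maps being holomorphic, $F_0\colon U\to{\mathcal E}^{s_0}_{p_0q_0r_0}$ is holomorphic, in particular continuous on $U$.

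It remains to pass from $F_0$ to $G_0(z)=\int_\theta^z F_0(w)\,dw$. On $U$, the fundamental theorem of calculus for holomorphic Banach-space-valued functions gives that $G_0$ is holomorphic there with $G_0'=F_0$. For continuity up to the boundary I would use the representation $G_0(z)=(z-\theta)\int_0^1 F_0\bigl(\theta+t(z-\theta)\bigr)\,dt$: for $t\in[0,1)$ the point $\theta+t(z-\theta)$ lies in the open strip $U$, hence the integrand is continuous in $z$ there, it is dominated uniformly in $t$ by the local bound on $\|F_0\|_{{\mathcal E}^{s_0}_{p_0q_0r_0}}$ from the first paragraph, and dominated convergence yields continuity of $G_0$ at every point of $\overline{U}$; note that continuity of $F_0$ up to the boundary is \emph{not} needed here, which is fortunate because the choice of $\varepsilon$ above degenerates as ${\rm Re}(z)\to0$. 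The maps $F_1$ and $G_1$ are treated in the same way, now invoking \eqref{eq:150514-4} on $\{V_\nu(f)>1\}$ (where the factor $V_\nu(f)^{-\varepsilon}$ is automatically $\le1$) and $V_\nu(f)^{p/p_1}\in\cM^{p_1}_{q_1}$. Then $G=G_0+G_1$ is continuous on $\overline{U}$ and holomorphic on $U$ with values in $\overset{\diamond}{{\mathcal E}}{}^{s_0}_{p_0q_0r_0}+\overset{\diamond}{{\mathcal E}}{}^{s_1}_{p_1q_1r_1}$, which is the assertion. I expect the second paragraph to be the main obstacle: the Morrey norm is not absolutely continuous, so dominated convergence cannot be used for the $z$-difference quotients, and their convergence must be wrung out of \eqref{eq:150514-3}--\eqref{eq:150514-4} and the structural bound $|\varphi_\nu(D)f|\le 2^{-\nu s}V_\nu(f)$, using crucially that ${\rm Re}(z)>0$ on the open strip absorbs the singular power $V_\nu(f)^{-\varepsilon}$.
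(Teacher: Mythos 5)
Your proposal is essentially correct, but it reaches the conclusion by a genuinely different route than the paper. The paper also splits $G=G_0+G_1$, but then proves continuity by estimating $\|G_0(z_1)-G_0(z_2)\|_{{\mathcal E}^{s_0}_{p_0q_0r_0}}$ and $\|G_1(z_1)-G_1(z_2)\|_{{\mathcal E}^{s_1}_{p_1q_1r_1}}$ directly: since $\bigl|\int_{z_2}^{z_1}V_\nu(f)^{Qw}\,dw\bigr|\le|z_1-z_2|$ on the relevant set, Lemma \ref{lem:151109-1} yields the Lipschitz bound $\|G(z_1)-G(z_2)\|\lesssim|z_1-z_2|\bigl(\|f\|_{{\mathcal E}^s_{pqr}}^{p/p_0}+\|f\|_{{\mathcal E}^s_{pqr}}^{p/p_1}\bigr)$ on all of $\overline{U}$ at once, with no boundary/interior distinction and no appeal to \eqref{eq:150514-3}--\eqref{eq:150514-4}; holomorphy is then dispatched in one line by asserting $G'=F$. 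Your argument inverts this: you first prove that $F_0,F_1$ are themselves holomorphic on $U$ (term-by-term differentiation of $F_{0,\nu}$ via \eqref{eq:150514-3}, the bound $|\varphi_\nu(D)f|\le 2^{-\nu s}V_\nu(f)$, and locally uniform convergence of the tails), recover $G_0,G_1$ by the fundamental theorem of calculus, and get boundary continuity from the parametrized integral representation plus the uniform bound on $\|F_j\|$. What your approach buys is a genuine verification of the holomorphy claim, which the paper leaves implicit (the assertion $G'=F$ tacitly requires continuity of $F$, never checked there); what the paper's approach buys is a stronger conclusion (Lipschitz continuity on $\overline{U}$) obtained more cheaply, which is in any case re-derived and needed for Lemma \ref{lem:17415-4}. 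Two small points to tidy: for $F_1$ you should invoke \eqref{eq:150514-4}, which produces a factor $V_\nu(f)^{+\varepsilon}$ on $\{V_\nu(f)>1\}$ (not a factor $\le 1$), so you need $\varepsilon<Q(1-{\rm Re}(z))$ there, mirroring the constraint $\varepsilon<Q\,{\rm Re}(z)$ for $F_0$ --- this still only gives regularity of $F_0,F_1$ on the open strip, which, as you correctly note, suffices; and the locally uniform convergence of $\sum_\nu F_{0,\nu}$ via the tail estimates of Lemma \ref{lem:17415-1} uses $\lim_{J\to\infty}\|S(f;a,J,r,s)\|_{\cM^p_q}=0$, which is available in the settings where the lemma is actually applied but is worth stating as a hypothesis.
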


\begin{proof}
We suppose
 $f \in  \overset{\diamond}{{\mathcal E}}{}^{s}_{pqr}$.
The case of
$f \in  
{\mathcal E}^{s}_{pqr}$
can be handled similarly.
Let $z_1, z_2 \in \overline{U}$. 
By virtue of  \eqref{eq:161018-1} and Theorem \ref{thm:vec-maxi}, we have
\begin{align}\label{eq:170419-4}
\|&G_0(z_1)-G_0(z_2)\|_{\overset{\diamond}{\mathcal{E}}{}^{s_0}_{p_0q_0r_0}}
\nonumber
\\
&=
\|G_0(z_1)- G_0(z_2)\|_{{\mathcal{E}}{}^{s_0}_{p_0q_0r_0}}
\nonumber
\\
&\lesssim
\left\|
\varphi_0(D)f \cdot V_0(f)^{\frac{p}{p_0}-1}
\int_{z_2}^{z_1} V_0(f)^{Qw} \ dw \chi_{\{V_0(f) \le 1  \}}\right\|_{\cM^{p_0}_{q_0}}
\nonumber
\\
&\quad +
\left\|
\varphi_1(D)f \cdot V_1(f)^{\frac{p}{p_0}-1}
\int_{z_2}^{z_1} V_1(f)^{Qw}\ dw \chi_{\{V_1(f) \le 1  \}}
\right\|_{\cM^{p_0}_{q_0}}
\nonumber
\\
&\quad +
\left\|
\left(
\sum_{l=1}^{\infty}
\left|2^{ls} 
\varphi_l(D)f \cdot V_l(f)^{\frac{p}{p_0}-1}
\int_{z_2}^{z_1} V_l(f)^{Qw} \ dw \chi_{\{V_l(f) \le 1  \}}\right|^{r_0}
\right)^{\frac{1}{r_0}}
\right\|_{\cM^{p_0}_{q_0}}.
\end{align}
Combining  \eqref{eq:170419-4}
and Lemma \ref{lem:151109-1},  
we get
\begin{align}\label{eq:17421-1}
\|G_0(z_1)- G_0(z_2)\|_{\overset{\diamond}{\mathcal{E}}{}^{s_0}_{p_0q_0r_0}}
&\lesssim
|z_1-z_2|
\|\varphi(D)f\|_{\cM^p_q}^{\frac{p}{p_0}}
+
|z_1-z_2|
\|\varphi_1(D)f\|_{\cM^p_q}^{\frac{p}{p_0}}
\nonumber
\\
&\quad +
|z_1-z_2|
\left\|
\left(
\sum_{l=1}^{\infty}
|2^{ls} \varphi_l(D)f|^r
\right)^{\frac{1}{r}}
\right\|_{\cM^p_q}^{\frac{p}{p_0}}
\nonumber
\\
&\lesssim
|z_1-z_2| \|f\|_{\mathcal{E}{}^{s}_{pqr}}^{\frac{p}{p_0}}.
\end{align}
Likewise, 
\begin{align}\label{eq:17421-2}
\|G_1(z_1)- G_1(z_2)\|_{\overset{\diamond}{\mathcal{E}}{}^{s_1}_{p_1q_1r_1}}
\lesssim
|z_1-z_2| \|f\|_{\mathcal{E}{}^{s}_{pqr}}^{\frac{p}{p_1}}.
\end{align}
Therefore, \eqref{eq:17421-1} and \eqref{eq:17421-2} yield 
\begin{align}
\|G(z_1)- G(z_2)\|_{\overset{\diamond}{\mathcal{E}}{}^{s_0}_{p_0q_0r_0}+\overset{\diamond}{\mathcal{E}}{}^{s_1}_{p_1q_1r_1}}
\lesssim
|z_1-z_2|
\left(
\|f\|_{\mathcal{E}{}^{s}_{pqr}}^{\frac{p}{p_0}}
+\|f\|_{\mathcal{E}{}^{s}_{pqr}}^{\frac{p}{p_1}}
\right).
\end{align}
This implies the continuity of $G$.
Furthermore, for every $z\in U$, we have $G'(z)=F(z)$ and
$F(z)\in \overset{\diamond}{\mathcal{E}}{}^{s_0}_{p_0q_0r_0}
+\overset{\diamond}{\mathcal{E}}{}^{s_1}_{p_1q_1r_1}$.
\end{proof}
%
%
Let $k=0,1$ and $t_1,t_2 \in \R$. 
By a similar argument for obtaining \eqref{eq:17423-1}, 
we have 
\begin{equation}\label{eq:160526-2b}
G(k+i t_1)-G(k+i t_2) \in 
{\mathcal E}^{s_k}_{p_kq_kr_k}
\end{equation}
if $f \in {\mathcal E}^{s}_{pqr}$
and
we have 
\begin{equation}\label{eq:160526-2a}
G(k+i t_1)-G(k+i t_2) \in 
\overset{\diamond}{{\mathcal E}}{}^{s_k}_{p_kq_kr_k}
\end{equation}
if $f \in 
\overset{\diamond}{{\mathcal E}}{}^{s}_{pqr}$.

\begin{lemma}\label{lem:17415-4}
Let $k=0,1$. 
Let
$f \in {\mathcal E}^{s}_{pqr}$
with norm $1$.
\begin{enumerate}
\item
Then the function $t\in \R \mapsto G(k+it)-G(k) \in 
{\mathcal E}^{s_k}_{p_kq_kr_k}$ 
is Lipschitz continuous.
\item
Assume $r_0=r_1=r$ and $s_0=s_1=s$.
Let
$f \in 
\overset{\diamond}{{\mathcal E}}{}^{s}_{pqr}$.
Then the function $t\in \R \mapsto G(k+it)-G(k) \in 
\overset{\diamond}{{\mathcal E}}{}^{s_k}_{p_kq_kr_k}$ 
is Lipschitz continuous.
\end{enumerate}
\end{lemma}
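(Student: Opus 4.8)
The plan is to verify condition~(c) in the definition of ${\mathcal G}(\cdot,\cdot)$ for $G$. Since $G(k+it_1)-G(k+it_2)$ is already known to belong to ${\mathcal E}^{s_k}_{p_kq_kr_k}$ by \eqref{eq:160526-2b} (resp.\ to $\overset{\diamond}{{\mathcal E}}{}^{s_k}_{p_kq_kr_k}$ by \eqref{eq:160526-2a}, where the two norms coincide), it suffices to prove
\[
\bigl\|G(k+it_1)-G(k+it_2)\bigr\|_{{\mathcal E}^{s_k}_{p_kq_kr_k}}\lesssim|t_1-t_2|\qquad(t_1,t_2\in\R),
\]
with implied constant depending only on the thirteen parameters. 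Write $F(z)=\sum_\nu\varphi_\nu(D)m_\nu(z)$, where $m_\nu(z)$ is the $\nu$-th multiplicand: namely $V_\nu(f)^{p(\frac{1-z}{p_0}+\frac{z}{p_1})-1}\varphi_\nu(D)f$ for the $F$ of \eqref{eq:170301-52}, and $2^{\nu\rho_1(z)}A_\nu^{\rho_2(z)/r}\|f\|^{\rho_3(z)}{\rm sgn}(\varphi_\nu(D)f)|\varphi_\nu(D)f|^{\rho_4(z)}$, with $A_\nu:=\sum_{j=1}^\nu|2^{js}\varphi_j(D)f|^r$, for the $F$ used in the proof of Theorem~\ref{thm:170426s-2}. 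Interchanging sum and integral, $G(z)=\sum_\nu\varphi_\nu(D)\int_\theta^z m_\nu(w)\,dw$, so
\[
G(k+it_1)-G(k+it_2)=\sum_\nu\varphi_\nu(D)\Bigl[\,i\!\int_{t_2}^{t_1}m_\nu(k+i\tau)\,d\tau\Bigr].
\]
The crucial structural observation is that every factor of $m_\nu(z)$ depending on $z$ is an exponential $e^{z\gamma_\nu}$ with $\gamma_\nu\in\R$ (the exponents $\rho_i$ being real, and $\log|\varphi_\nu(D)f|$, $\log V_\nu(f)$, $\log A_\nu$, $\log\|f\|$ real); hence, on the line ${\rm Re}(z)=k$, the modulus $|m_\nu(k+i\tau)|$ is independent of $\tau$ and equals $|m_\nu(k)|$, which yields the pointwise bound $\bigl|\int_{t_2}^{t_1}m_\nu(k+i\tau)\,d\tau\bigr|\le|t_1-t_2|\,|m_\nu(k)|$.

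Using $|\varphi_\ell(D)g|\lesssim Mg$, the support relation $\varphi_\ell\varphi_j=0$ for $|\ell-j|\ge2$ exactly as in Lemma~\ref{lem:161017-1}, and the vector-valued maximal inequality of Theorem~\ref{thm:vec-maxi}, the quantity $\|G(k+it_1)-G(k+it_2)\|_{{\mathcal E}^{s_k}_{p_kq_kr_k}}$ is then bounded by $|t_1-t_2|$ times the ${\mathcal M}^{p_k}_{q_k}$-norm of $\bigl(\sum_\nu|2^{\nu s_k}m_\nu(k)|^{r_k}\bigr)^{1/r_k}$. At $z=k$ one has $|m_\nu(k)|=V_\nu(f)^{p/p_k-1}|\varphi_\nu(D)f|$ for the $F$ of \eqref{eq:170301-52}, and, using $\rho_1(k)+s_k=sr/r_k$, $\rho_2(k)+\rho_4(k)=p/p_k$, $\rho_4(k)=r/r_k$ and $\|f\|=1$, the bound $|2^{\nu s_k}m_\nu(k)|^{r_k}\lesssim a_\nu A_\nu^{\kappa-1}$ with $a_j:=|2^{js}\varphi_j(D)f|^r$ and $\kappa:=pr_k/(p_kr)>0$ for the other $F$ (the two cases coinciding when $r_0=r_1=r$). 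The key identity $V_\nu(f)^r=\sum_{j=0}^\nu a_j$ exhibits the $\nu$-sum in the telescoping form $\sum_\nu a_\nu\bigl(\sum_{j\le\nu}a_j\bigr)^{\kappa-1}$ and, crucially, keeps $\sum_\nu a_\nu=S(f;r,s)^r$ finite — one must resist the naive bound $|\varphi_\nu(D)f|\le2^{-\nu s}V_\nu(f)$, which would replace $\sum_\nu a_\nu$ by the divergent $\sum_\nu V_\nu(f)^{r\kappa}$. Lemma~\ref{lem:151109-1} bounds this sum by $C_\kappa\,S(f;r,s)^{r\kappa}$, and since $r\kappa/r_k=p/p_k$ and $p_k/q_k=p/q$, the Morrey power rule $\|S(f;r,s)^{p/p_k}\|_{{\mathcal M}^{p_k}_{q_k}}=\|S(f;r,s)\|_{{\mathcal M}^p_q}^{p/p_k}$ closes the estimate at $\lesssim|t_1-t_2|\,\|f\|_{{\mathcal E}^s_{pqr}}^{p/p_k}=|t_1-t_2|$. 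The lone $\nu=0$ term, involving only $\varphi_0(D)f$, is disposed of by a direct estimate, and in part~(2) the hypotheses $r_0=r_1=r$, $s_0=s_1=s$ render the exponent bookkeeping transparent while the target space is upgraded to $\overset{\diamond}{{\mathcal E}}{}^{s_k}_{p_kq_kr_k}$ for free by \eqref{eq:160526-2a}.

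I expect the genuine effort to be the exponent bookkeeping above and the recognition of the telescoping structure so that Lemma~\ref{lem:151109-1} applies. The conceptual simplification — consistent with the paper's emphasis on the role of ``$\log$'' — is that, unlike the interior estimate of Lemma~\ref{lem:17415-1}, on the boundary lines ${\rm Re}(z)=k$ the complex powers have constant modulus, so the delicate factor $|\log|f||^{-1}$ does not arise; consequently neither the splitting $F=F_0+F_1$ nor the $\Psi_\kappa$, $\Phi_\kappa$ machinery is needed, and the straightforward combination of Lemmas~\ref{lem:161017-1}, \ref{lem:151109-1} with Theorem~\ref{thm:vec-maxi} suffices.
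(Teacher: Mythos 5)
Your quantitative estimate is correct and is essentially the paper's own argument: the paper likewise exploits that $|V_j(f)^{Qit}|=1$ (and that all $z$-dependent factors have constant modulus) on the boundary lines, reduces via \eqref{eq:161018-1} and Theorem \ref{thm:vec-maxi} to the ${\mathcal M}^{p_k}_{q_k}$-norm of the square function of the boundary values, and closes with the telescoping Lemma \ref{lem:151109-1} and the Morrey power rule $\|S(f;r,s)^{p/p_k}\|_{{\mathcal M}^{p_k}_{q_k}}=\|S(f;r,s)\|_{{\mathcal M}^{p}_{q}}^{p/p_k}$, with the same exponent bookkeeping ($\kappa=pr_k/(p_k r)$, resp.\ $\kappa=p/p_k$) and the same separate treatment of the low-frequency terms. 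One caveat: for part (2) the assertion that the map is Lipschitz \emph{into} $\overset{\diamond}{{\mathcal E}}{}^{s_k}_{p_kq_kr_k}$ also requires the membership statement \eqref{eq:160526-2a}, and the paper's proof of this very lemma spends its first half deriving exactly that, via the splitting into $I_1$ and $I_2$ and Lemmas \ref{lem:161101-1}, \ref{lem:151112-1} and \ref{lem:161109-2}. So your closing claim that the $\Phi_\kappa$, $\Psi_\kappa$ and ``$\log$'' machinery is dispensable is accurate only for the norm inequality --- and hence for part (1), where finiteness of the Lipschitz bound already gives membership in ${\mathcal E}^{s_k}_{p_kq_kr_k}$ --- whereas for part (2) that machinery is not avoided but merely relocated into the proof of \eqref{eq:160526-2a}, which you cite.
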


\begin{proof}
We suppose
 $f \in  \overset{\diamond}{{\mathcal E}}{}^{s}_{pqr}$.
The case of
$f \in  
{\mathcal E}^{s}_{pqr}$
can be handled similarly.
Let $t_1, t_2 \in \R$ and let $J\in \mathbb{N} \cap [5,\infty)$.
By a similar argument for obtaining \eqref{eq:17423-1}, we have 
\begin{align}\label{eq:17425-1}
\left\|
\sum_{j=J}^{\infty}
\varphi_j(D)\left(
G(it_1) -G(it_2)
\right)
\right\|_{\mathcal{E}^{s_0}_{p_0q_0r_0}}
\lesssim
\|I_1\|_{\cM^{p_0}_{q_0}}
+
\|I_2\|_{\cM^{p_0}_{q_0}}
\end{align}
where 
\begin{align*}
I_1:=
\chi_{[a,a^{-1}]}(S(f;r,s)) 
\left(
\sum_{j=J-1}^\infty
|2^{js} \varphi_j(D)f|^{r}
V_j(f)^{\frac{pr}{p_0}-r}
\left|
\int_{t_2}^{t_1}
V_j(f)^{Qit} \ dt
\right|^{r}
\right)^{\frac{1}{r}}
\end{align*}
and 
\begin{align*}
I_2
&:=
(1-\chi_{[a,a^{-1}]}(S(f;r,s)))
\\
&\times 
\left(
\sum_{j=J-1}^\infty
|2^{js} \varphi_j(D)f|^{r}
V_j(f)^{\frac{pr}{p_0}-r}
\left|
\frac{V_j(f)^{Qit_1}-V_j(f)^{Qit_2}}{\log(V_j(f))}
\right|^{r}
\right)^{\frac{1}{r}}.
\end{align*}
By virtue of Lemma \ref{lem:151109-1}, we get
\begin{align}\label{eq:170403-22}
\|I_1\|_{\cM^{p_0}_{q_0}}
&\lesssim
|t_1-t_2|
\left\|
\chi_{[a,a^{-1}]}(S(f;r,s)) \left(
\sum_{j=J-1}^\infty
|2^{js} \varphi_j(D)f|^{r}
V_j(f)^{\frac{pr}{p_0}-r}
\right)^{\frac{1}{r}}
\right\|_{\cM^{p_0}_{q_0}}
\nonumber
\\
&\lesssim
|t_1-t_2|
\left\|
\chi_{[a,a^{-1}]}(S(f;r,s))
\left(
\sum_{\ell=J-1}^{\infty}
|2^{\ell s} \varphi_\ell(D)f|^{r}
\right)^{\frac{p}{rp_0}}
\right\|_{\cM^{p_0}_{q_0}}
\nonumber
\\
&\le 
|t_1-t_2|
\|S(f;a,J-1,r,s)\|_{\cM^p_q}^{\frac{p}{p_0}}.
\end{align}
We combine  Lemmas \ref{lem:161101-1},
\ref{lem:151112-1},  and \ref{lem:161109-2} to obtain
\begin{align*}
I_2 &\lesssim
(1-\chi_{[a,a^{-1}]}(S(f;r,s)) )
\left(
\sum_{j=0}^{\infty}
|2^{js} \varphi_j(D)f|^{r}
\Phi_{\frac{p}{p_0}}(V_j(f))^{r}
\right)^{\frac{1}{r}}
\\
&\lesssim
(1-\chi_{[a,a^{-1}]}(S(f;r,s)) )
\Psi_{\frac{p}{p_0}}
\left(
S(f;r,s)^r
\right)^{\frac{1}{r}}
\\
&\lesssim
\left(
a^{\frac{p}{p_0}(r-1)}
+
\left(
\log\left(
\sqrt{a}+\frac{1}{\sqrt{a}}
\right)
\right)^{-r}
\right)
S(f;r,s)^{\frac{p}{p_0}}.
\end{align*}
Consequently,
\begin{align}\label{eq:170403-33}
\|I_2\|_{\cM^{p_0}_{q_0}}
\lesssim
\left(
a^{\frac{p}{p_0}(r-1)}
+
\left(
\log\left(
\sqrt{a}+\frac{1}{\sqrt{a}}
\right)
\right)^{-r}
\right)
\|f\|_{\mathcal{E}^s_{pqr}}^{\frac{p}{p_0}}
\end{align}
Therefore, by combining \eqref{eq:17425-1}, \eqref{eq:170403-22} and \eqref{eq:170403-33},  and then taking $J\to \infty$ and $a\to 0^+$, 
we have
\[
\lim\limits_{J\to \infty}
\left\|
\sum_{\ell=J}^{\infty}
\varphi_\ell(D)(G(it_1)-G(it_2))
\right\|_{\mathcal{E}^{s_0}_{p_0q_0r_0}}
=0.
\]
Thus, $G(it_1)-G(it_2) \in \overset{\diamond}{\mathcal{E}}{}^{s_0}_{p_0q_0r_0}$.
The proof of 
$G(1+it_1)-G(1+it_2) \in \overset{\diamond}{\mathcal{E}}{}^{s_1}_{p_1q_1r_1}$ is similar.

Now we prove the second part of this lemma.
From \eqref{eq:161018-1} and Theorem \ref{thm:vec-maxi},  it follows that
\begin{align}\label{eq:170421-4}
\|G(it_1)-G(it_2)\|_{\overset{\diamond}{\mathcal{E}}{}^{s_0}_{p_0q_0r_0}}
&=
\|G(it_1)- G(it_2)\|_{{\mathcal{E}}{}^{s_0}_{p_0q_0r_0}}
\nonumber
\\
&\lesssim
\left\|
\varphi_0(D)f \cdot V_0(f)^{\frac{p}{p_0}-1}
\int_{t_2}^{t_1} V_0(f)^{Qit} \ dt \right\|_{\cM^{p_0}_{q_0}}
\nonumber
\\
&\quad +
\left\|
\varphi_1(D)f \cdot V_1(f)^{\frac{p}{p_0}-1}
\int_{t_2}^{t_1} V_1(f)^{Qit}\ dt 
\right\|_{\cM^{p_0}_{q_0}}
\nonumber
\\
&\quad +
\left\|
\left(
\sum_{l=1}^{\infty}
\left|2^{ls} 
\varphi_l(D)f \cdot V_l(f)^{\frac{p}{p_0}-1}
\int_{t_2}^{t_1} V_l(f)^{Qit} \ dt \right|^{r_0}
\right)^{\frac{1}{r_0}}
\right\|_{\cM^{p_0}_{q_0}}.
\end{align}
By virtue of Lemma \ref{lem:151109-1},
we get
\begin{align*}
\|G(it_1)-G_0(it_2)\|_{\overset{\diamond}{\mathcal{E}}{}^{s_0}_{p_0q_0r_0}} 
&\lesssim
|t_1-t_2|
\|\varphi(D)f\|_{\cM^p_q}^{\frac{p}{p_0}}
+
|t_1-t_2|
\|\varphi_1(D)f\|_{\cM^p_q}^{\frac{p}{p_0}}
\nonumber
\\
&\quad +
|t_1-t_2|
\left\|
\left(
\sum_{l=1}^{\infty}
|2^{ls} \varphi_l(D)f|^r
\right)^{\frac{1}{r}}
\right\|_{\cM^p_q}^{\frac{p}{p_0}}
\lesssim
|t_1-t_2| 
\|f\|_{\mathcal{E}^s_{pqr}}^{\frac{p}{p_0}}.
\end{align*}
By a similar argument, we also have
\[
\|G(1+it_1)-G_0(1+it_2)\|_{\overset{\diamond}{\mathcal{E}}{}^{s_1}_{p_1q_1r_1}} 
\lesssim
|t_1-t_2| 
\|f\|_{\mathcal{E}^s_{pqr}}^{\frac{p}{p_1}},
\]
as desired.
\end{proof}

\section*{Acknowledgement}

The authors are thankful to 
Professors Wen Yuan and Dachun Yang for
his helpful discussion.

\end{document}